\newtheorem{thm}{Theorem}[section]
\newtheorem{prop}[thm]{Proposition}
\newtheorem{lem}[thm]{Lemma}
\newtheorem{cor}[thm]{Corollary}
\theoremstyle{definition}
\theoremstyle{remark}
\newtheorem{remark}{Remark}[section]
\numberwithin{equation}{section}
\theoremstyle{notation}
\newcommand{\R}{\mathbb{R}}
\newcommand{\RN}{\mathbb{R}^N}
\newcommand{\RNp}{\mathbb{R}^{N}_{+}}
\newcommand{\N}{\mathbb{N}}
\renewcommand{\epsilon}{\varepsilon}
\newcommand{\gradu}{\nabla u}
\newcommand{\gradpsi}{ \nabla \psi}
\newcommand{\eps}{\varepsilon}
\newcommand{\cK}{{\mathcal K}}
\newcommand{\dist}{{\rm dist}}
\newcommand{\supp}{{\rm supp}}
\newcommand{\weakto}{\rightharpoonup}
\newcommand{\woo}{{c_p}}
\renewcommand{\epsilon}{\varepsilon}
\begin{document}

\title[The nonlinear Schr\"odinger equation in the half-space]{The nonlinear Schr\"odinger equation in the half-space}

\author[A. J. Fern\'andez and T. Weth]{Antonio J. Fern\'andez and Tobias Weth}

\address{
\newline
\textbf{{\small Antonio J. Fern\'andez}} 
\newline \indent Department of Mathematical Sciences, University of Bath, Bath BA2 7AY, UK.}
%\newline \indent Universit\'e Polytechnique Hauts-de-France, EA 4015-LAMAV-FR CNRS 2956, F-59313 Valenciennes, France \&
%\newline \indent Laboratoire de Math\'ematiques (UMR 6623), Universit\'e de Bourgogne Franche-Comt\'e,
%\newline \indent 16, Route de Gray 25030 Besan\c con Cedex, France}
\email{ajf77@bath.ac.uk} 

\address{
\vspace{-0.25cm}
\newline
\textbf{{\small Tobias Weth}}
\newline \indent Institut f\"ur Mathematik, Goethe-Universit\"at Frankfurt, Robert-Mayer-Str. 10, D-60629 Frankfurt am Main, Germany.}
\email{weth@math.uni-frankfurt.de}

\date{}
\subjclass[2010]{}
\keywords{}

\maketitle
 
\begin{abstract} 
The present paper is concerned with the half-space Dirichlet problem
\vspace{0.2cm}
\begin{equation} \label{problem-abstract} \tag{$P_c$}
-\Delta v + v = |v|^{p-1}v,\ \textup{ in } \RNp, \qquad v = c,\ \textup{ on } \partial \RNp,\ \qquad \lim_{x_N \to \infty} v(x',x_N) = 0 \textup{ uniformly in }x' \in \R^{N-1},
\end{equation}
where $\RNp := \{\,x \in \RN: x_N > 0\, \}$ for some $N \geq 1$ and $p > 1$, $c > 0$ are constants. We analyse the existence, non-existence and multiplicity of bounded positive solutions to \eqref{problem-abstract}. We prove that the existence and multiplicity of bounded positive solutions to \eqref{problem-abstract} depend in a striking way on the value of $c > 0$ and also on the dimension $N$. We find an explicit number $\woo \in (1,\sqrt{e})$, depending only on $p$, which determines the threshold between existence and non-existence. In particular, in dimensions $N \geq 2$, we prove that, for $0 < c < \woo$, problem \eqref{problem-abstract} admits infinitely many bounded positive solutions, whereas, for $c > \woo$, there are no bounded positive solutions to \eqref{problem-abstract}.
\end{abstract}

\section{Introduction} \label{section-introduction}

\noindent Due to its relevance within several models arising in physics and biology, the nonlinear stationary Schr\"odinger equation
\begin{equation} \label{problem-RN-intro}
-\Delta v + v = |v|^{p-1} v, \quad \textup{ in } \RN,
\end{equation}
received extensive attention in the last four decades. In particular, let us mention that the study of solitary wave solutions for the (focusing) NLS
$$
i \partial_t \varphi + \Delta \varphi + |\varphi|^{p-1}\varphi = 0, \quad (t,x) \in \R \times \RN,
$$
is reduced to problem \eqref{problem-RN-intro} via a time-harmonic ansatz. For classic existence and multiplicity results, we refer the reader e.g. to the seminal papers \cite{BeLiI1983, BeLiII1983, BaWi1993, BaWi1993-2, BeGaKa1983, St1977, Ra1992} and the monographs \cite{Wi1996, St2008}. We also recall the fundamental works \cite{GiNiNi1981, Kw1989} where the radial symmetry and uniqueness, up to translations, of positive solutions to \eqref{problem-RN-intro} satisfying the decay condition
\begin{equation} \label{problem-RN-intro-decay-condition}
v(x) \to 0, \qquad \text{as $|x| \to \infty$},
\end{equation}
are proved in the case $1 < p < 2^*-1$. Here and in the following $2^*$ denotes the critical Sobolev exponent, i.e. $2^*= \frac{2N}{N-2}$ for $N \ge 3$ and $2^* =2^*-1 = \infty$ for $N=1,2$.
In particular, these results imply the uniqueness, up to translations, of positive finite energy solutions $u \in H^1(\R^N)$. In contrast, for $N \geq 2$,  \eqref{problem-RN-intro} admits an abundance of sign-changing finite energy solutions satisfying (\ref{problem-RN-intro-decay-condition}), see e.g. \cite{BaWi1993, BaWi1993-2,MuPaWe2012,lorca-ubilla-2004} and the references therein. Moreover, more recent geometric constructions of different solution shapes highlight the rich structure of the set of positive solutions which do not satisfy the decay assumption (\ref{problem-RN-intro-decay-condition}), see e.g. \cite{dancer-2001,AoMuPaWe2016,MuPaWe2012,liu-wei-2019} and the references therein.

Whereas it seems impossible to provide an exhaustive list of references for the full space problem (\ref{problem-RN-intro}), much less is known regarding the half-space Dirichlet problem  
\begin{equation} \label{mainProblem}
\left\{
\begin{aligned}
 -\Delta v + v & = |v|^{p-1} v, \quad & \textup{ in } \RNp,\\
 v & = c, & \quad \textup{ on } \partial \RNp,
\end{aligned}
\right.
\end{equation}
where $\RNp := \{ x \in \RN: x_N > 0\}$ for some $N \geq 1$ and $c \ge 0$ is a constant. In the case $c=0$, general nonexistence results are available for (\ref{mainProblem}). More precisely, the non-existence of finite energy solutions $u \in H_0^1(\RNp)$ to (\ref{mainProblem}) in the case $c=0$ follows from \cite[Theorem I.1]{EsLi1982}, while \cite[Corollary 1.3]{BeCaNi1997} yields, in particular, the non-existence of positive solutions to (\ref{mainProblem}) with $c=0$ and the the decay property
\begin{equation} \label{cond-infinity}
\lim_{x_N \to \infty} v(x',x_N) = 0, \qquad \text{uniformly in $x' \in \R^{N-1}$.}
\end{equation}

The aim of the present paper is to analyse the existence, non-existence and multiplicity of bounded positive solutions $v$ to the problem \eqref{mainProblem}-\eqref{cond-infinity} in the case $c>0$, for which we are not aware of any previous result in general dimensions $N$. As we shall see below, the multiplicity of positive solutions depends in a striking way on the value $c > 0$ and, somewhat surprisingly, also on the dimension $N$. Let us stress that we cannot expect the existence of finite energy solutions $u \in H^1(\RNp)$ to \eqref{mainProblem} in the case $N \ge 2$. Actually, we cannot expect solutions to \eqref{mainProblem} belonging to $L^p(\RNp)$ for any $1 \leq p < \infty$. The one-dimensional decay condition \eqref{cond-infinity} therefore seems natural. % More precisely, we are going to look for solutions $v$ to \eqref{mainProblem} satisfying 
%\begin{equation}
%v \in C^2(\RNp) \cap C(\overline{\RNp}) \cap L^{\infty}(\RNp) \quad \textup{ and } \quad \lim_{x_N \to \infty} v(x',x_N) = 0, \quad \textup{ uniformly in } x' \in \R^{N-1}.
%\end{equation}

The consideration of inhomogeneous Dirichlet boundary conditions as in \eqref{mainProblem} is to some extend motivated by recent works on pattern formation in biological and chemical models. For instance, in \cite{KrKlMaHeGa2020}, the authors numerically show that, for different types of reaction-diffusion systems, the pattern formation can be isolated away from the boundary using this type of boundary conditions. Moreover, within a rigorous analysis of some of these models in an asymptotically small diffusivity ratio, one may expect that the equation of the limiting profile is precisely \eqref{mainProblem} with $c > 0$. This is indeed the case for the Gierer-Meinhardt system considered in \cite{GoWe2020}. As one can observe in \cite[Section 2]{GoWe2020}, the \textit{homoclinic solution} $w_0 (\, \cdot+t_{c,p})$ (see \eqref{eq:def-w-0} and \eqref{eq:def-t-c-p} below) plays a key role in the construction of multi-spike patterns for this model.

Not surprisingly, problem~(\ref{mainProblem}) is completely understood in the case $N=1$. This is due to the fact that the one-dimensional equation 
\begin{equation} \label{1dim}
-w'' + w = w^p,
\end{equation}
admits a first integral, and from this one can easily deduce that, for $1<p< \infty$, (\ref{1dim}) admits, up to sign and translation, a unique global non trivial solution satisfying $w(t) \to 0$ as $t \to \pm \infty$. See e.g. \cite[Theorem 5]{BeLiI1983}, \cite[Theorem 3.16]{Wi1996} and  \cite[Theorem 1.2]{JeTa2003} where, in addition, the Mountain-Pass characterization of this unique solution is established. By direct computations, one can verify that this solution is precisely given by 
\begin{equation}
  \label{eq:def-w-0}
t \mapsto w_0(t) = \woo \left[ \cosh \left( \frac{p-1}{2} t \right) \right]^{-\frac{2}{p-1}}, \qquad \text{with}\qquad \woo: = \left( \frac{p+1}{2} \right)^{\frac{1}{p-1}} = w_0(0) = \sup_{t \in \R} w_0(t).
\end{equation}
As we shall see below, the value $\woo$ will be of key importance also for the higher dimensional version of (\ref{mainProblem}). The following complete characterization of the one dimensional case is an immediate consequence of these facts.

\begin{prop}
\label{prop-N=1}  
Let $N =1$, $p>1$ and $c>0$. Then:
\begin{itemize} 
\item[(i)] If $0 < c < \woo$, problem \eqref{mainProblem}-\eqref{cond-infinity} admits exactly two positive solutions given by $t \mapsto w_0(t + t_{c,p})$ and $t \mapsto w_0(t - t_{c,p})$, where 
  \begin{equation}
    \label{eq:def-t-c-p}
t_{c,p}:= \frac{2}{p-1} \ln \left( \sqrt{\frac{p+1}{2c^{p-1}}} + \sqrt{\frac{p+1}{2c^{p-1}}-1}\right).
\end{equation}
\item[(ii)] If $c = \woo$,  the function $w_0$ is the unique positive solution to \eqref{mainProblem}-\eqref{cond-infinity}.
\item[(iii)] If $c > \woo$, problem \eqref{mainProblem}-\eqref{cond-infinity} does not admit solutions.
\end{itemize}
\end{prop}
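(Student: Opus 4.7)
The plan is to exploit the fact that the one-dimensional equation \eqref{1dim} is autonomous and therefore admits the conserved quantity
\begin{equation*}
H(w,w') := \frac{(w')^2}{2} - \frac{w^2}{2} + \frac{w^{p+1}}{p+1},
\end{equation*}
as one sees by multiplying \eqref{1dim} by $w'$ and integrating. So $H(w(t),w'(t))$ is a constant $E$ along any solution.

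I would first show that any positive solution $w$ of \eqref{mainProblem}-\eqref{cond-infinity} on $[0,\infty)$ must have $E = 0$. Since $w(t) \to 0$ and $p > 1$, the relation $w'' = w - w^p$ forces $w''(t) \ge 0$ for $t$ large, so $w$ is eventually convex; combined with $w \ge 0$ and $w(t) \to 0$, this forces $w'(t) \to 0$ (otherwise $w$ would be unbounded below). Passing to the limit in $H(w,w') = E$ gives $E = 0$, hence
\begin{equation*}
(w'(t))^2 = w(t)^2 \left( 1 - \tfrac{2}{p+1}\, w(t)^{p-1} \right).
\end{equation*}
Evaluating at $t=0$ yields $(w'(0))^2 = c^2 (1 - \tfrac{2}{p+1}\, c^{p-1})$, which is strictly negative whenever $c > \woo$; this proves (iii). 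It also shows that any solution stays in $\{w \le \woo\}$.

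For (ii) the previous identity forces $w'(0) = 0$, so local ODE uniqueness for \eqref{1dim} with initial data $(\woo,0)$, together with the fact that $w_0$ realizes these data and is globally defined, gives $w \equiv w_0$. For (i), writing $\alpha := c\sqrt{1 - \tfrac{2}{p+1}\, c^{p-1}} > 0$, only the two values $w'(0) = \pm \alpha$ are admissible, so there are at most two solutions. I would then define $t_{c,p} > 0$ as the unique positive time with $w_0(t_{c,p}) = c$; since $w_0$ is even and strictly decreasing on $(0,\infty)$, the first integral gives $w_0'(t_{c,p}) = -\alpha$ and hence $w_0'(-t_{c,p}) = \alpha$. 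Thus the two translates $t \mapsto w_0(t + t_{c,p})$ and $t \mapsto w_0(t - t_{c,p})$ realize both admissible initial data, remain positive on $[0,\infty)$, and decay to zero at infinity, so they are precisely the two solutions in (i). Formula \eqref{eq:def-t-c-p} then follows by inverting the $\cosh$ in \eqref{eq:def-w-0}. The one delicate point is the justification of $w'(t) \to 0$ at infinity; once that is settled, everything else reduces to the conserved quantity, ODE uniqueness, and the explicit form of $w_0$.
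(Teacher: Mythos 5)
Your proposal is correct and follows essentially the same route the paper invokes: the first integral of \eqref{1dim}, the constraint it places on the admissible slope at $t=0$, and the explicit formula for $w_0$. The paper dispatches the proposition as an ``immediate consequence'' of these facts together with the classical uniqueness (up to sign and translation) of the homoclinic, whereas you carry out the phase-plane argument in detail --- showing $E=0$ via eventual convexity and $w'(t)\to 0$, deducing $(w'(0))^2 = c^2\bigl(1-\tfrac{2}{p+1}c^{p-1}\bigr)$ to get nonexistence for $c>\woo$ and the two admissible initial slopes for $c<\woo$, and then matching them with the translates of $w_0$ by ODE uniqueness. This is a complete and correct filling-in of the same argument, including the one delicate step you flag (eventual convexity of $w$ plus $w\ge 0$ and $w\to 0$ does force $w'\to 0$).
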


Our main results concern dimensions $N \ge 2$. In this case, problem (\ref{mainProblem})-(\ref{cond-infinity}) is invariant under translations and rotations parallel to the boundary $\partial \R^N_+ = \R^{N-1}$. In particular, if $N \ge 2$ and $v$ is a positive solution to (\ref{mainProblem})-(\ref{cond-infinity}), then the functions $x \mapsto v(x' + \tau,x_N)$, $\tau \in \R^{N-1}$ are also solutions to (\ref{mainProblem})-(\ref{cond-infinity}), where, here and in the following, we write $x = (x',x_N)$ for $x \in \R^N_{+}$ with $x' \in \R^{N-1}$. In the following, we call two solutions {\em geometrically distinct} if they do not belong to the same orbit of solutions under translations and rotations in $\R^{N-1}$.

\medbreak
Our first main result reads as follows.

\begin{thm} \label{th1}
Let $ N \geq 2$, $p > 1$ and $c > 0$. Then:
\begin{itemize}
\item[(i)] If $0 < c < \woo$, there exist at least three geometrically distinct bounded positive solutions to \eqref{mainProblem}-\eqref{cond-infinity}. 
\item[(ii)] If $c > \woo$, there are no bounded positive solutions to \eqref{mainProblem}-\eqref{cond-infinity}.
\end{itemize}
\end{thm}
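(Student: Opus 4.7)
For part (ii), I would establish non-existence via a Pohozaev-type integral identity, obtained by multiplying $-\Delta v + v = v^p$ by $\partial_{x_N} v$ and integrating over a cylinder $B_R' \times (0,\infty)$, with $B_R' \subset \R^{N-1}$ a ball of radius $R$. The preparatory ingredient is exponential decay: standard elliptic regularity combined with a linear-barrier argument based on the uniform limit $\lim_{x_N \to \infty} v = 0$ (comparing $v$ to solutions of $-\Delta w + (1-\eps)w = 0$ for small $\eps$) gives $|v(x)| + |\nabla v(x)| \leq C e^{-k x_N}$ for some $k > 0$. Integration by parts in $x_N$ and in $x'$ — using that $v \equiv c$ on $\partial \RNp$ forces $\nabla_{x'} v = 0$ there and that all contributions at $x_N = \infty$ vanish — then produces an identity of the form
\begin{equation*}
\frac{1}{2} \int_{B_R'} (\partial_{x_N} v(x',0))^2 \, dx' + \Big(\frac{c^{p+1}}{p+1} - \frac{c^2}{2}\Big) |B_R'| = \int_0^\infty \int_{\partial B_R'} \partial_{x_N} v \, \partial_\nu v \, d\sigma \, dx_N .
\end{equation*}
The assumption $c > \woo$ is exactly $c^{p-1} > (p+1)/2$, so the parenthesis is strictly positive and the LHS is bounded below by $\gamma R^{N-1}$ for some $\gamma > 0$; meanwhile the lateral-flux RHS is $O(R^{N-2})$ by the exponential decay. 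Letting $R \to \infty$ yields the desired contradiction (note that $N \geq 2$ is used precisely to ensure that $R^{N-1}$ dominates $R^{N-2}$).

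For part (i), Proposition~\ref{prop-N=1} directly supplies two geometrically distinct planar bounded positive solutions in any dimension $N \geq 2$: the monotone profile $v_1(x) := w_0(x_N + t_{c,p})$ and the profile $v_2(x) := w_0(x_N - t_{c,p})$ attaining its interior maximum $\woo$ at $x_N = t_{c,p}$. For a third, necessarily $N$-dimensional solution, I would write $v = v_1 + u$ with $u \in H_0^1(\RNp)$ decaying at infinity, so that $u$ solves $-\Delta u + u = (v_1 + u)^p - v_1^p$ in $\RNp$ with $u = 0$ on $\partial \RNp$, and seek a positive critical point of the associated functional
\begin{equation*}
J(u) := \frac{1}{2} \int_{\RNp} (|\nabla u|^2 + u^2) \, dx - \int_{\RNp} \Big[ \frac{(v_1 + u^+)^{p+1} - v_1^{p+1}}{p+1} - v_1^{p}\, u \Big] dx
\end{equation*}
via a Mountain-Pass scheme. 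One verifies $J(0) = 0$, positivity of $J$ on a small $H_0^1$-sphere (through a spectral analysis of the linearized operator $-\Delta + 1 - p v_1^{p-1}$, whose potential decays exponentially as $x_N \to \infty$), and $J(t u_0) \to -\infty$ along suitable rays $t u_0$. A positive MP critical point $u^\ast$ then yields the third solution $v_3 = v_1 + u^\ast$, geometrically distinct from $v_1$ (since $u^\ast \not\equiv 0$) and from $v_2$ (by its genuine $x'$-dependence, or by an energy comparison).

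The principal obstacle is the compactness of Palais--Smale sequences for $J$: the problem is invariant under translations in $x' \in \R^{N-1}$, so a concentrating bump may drift off to infinity along $\partial \RNp$ and cause loss of compactness. I would handle this with a Lions-type concentration-compactness dichotomy, using the $x'$-invariance of the background $v_1$ to characterize the possible split-off profiles — which themselves solve the same reduced equation, merely translated — and then establishing a strict inequality between the MP level and the energy cost of any nontrivial splitting via a tailored test function combining a bump-type profile with a cutoff of $v_2 - v_1$. This strict inequality is what secures tightness up to $x'$-translations and ultimately produces a bona fide nontrivial critical point.
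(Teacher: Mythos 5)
Your proof of part (ii) takes a genuinely different route from the paper's. You propose a Pohozaev-type identity, obtained by pairing the equation with $\partial_{x_N}v$ and integrating over the cylinder $B_R'\times(0,\infty)$; the key observation is that for $c>\woo$ the constant $\tfrac{c^{p+1}}{p+1}-\tfrac{c^2}{2}$ is positive, so the left side of your identity grows like $R^{N-1}$ while the lateral flux is $O(R^{N-2})$ thanks to the exponential decay in $x_N$. This is a valid approach (the preliminary barrier argument giving $v+|\nabla v|=O(e^{-kx_N})$ does go through in the half-space, via translation in $x'$ and compactness plus the strong maximum principle, though you should record it as a separate lemma). The paper instead runs a sliding argument, comparing $v$ with the translates $w_0(x_N+t)$ and sliding $t$ down to $-\infty$; this forces $v\geq\woo$ everywhere, contradicting \eqref{cond-infinity}. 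Your route is more computational and makes the threshold $\woo$ transparent as a sign change; the paper's is softer, needs only the qualitative decay \eqref{cond-infinity} rather than an exponential rate, and treats $N=1$ in the same stroke.

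For part (i), your strategy matches the paper's in outline --- decompose $v=w_0(x_N+t_{c,p})+u$ with $u\in H^1_0(\RNp)$, truncate the nonlinearity at zero, and run a mountain-pass argument --- but there are genuine gaps in the parts that carry the weight. First, the compactness obstruction you name is not the dangerous one: $x'$-translations can be undone by the invariance, and the paper does exactly that when recentring its Cerami sequence. The problematic escape is in the $x_N$-direction, away from the boundary, where the limit problem becomes the autonomous equation in $\R^N$ with least-energy level $b_\infty$; the inequality one must prove is $b<b_\infty$. Second, your proposed test function, ``a bump-type profile with a cutoff of $v_2-v_1$,'' cannot deliver this inequality. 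The function $v_2-v_1$ is $x'$-independent and hence not in $H^1_0(\RNp)$; cutting it off over a ball of radius $R$ in $x'$ yields a function whose $H^1$-norm scales like $R^{(N-1)/2}$, and the maximum of $E$ along the ray $t\mapsto t\phi$ then blows up as $R\to\infty$, giving no useful bound on $b$. The paper instead uses the $\R^N$-ground state $\psi$ of $-\Delta u+u=u^p$, translated to height $r$ in $x_N$ and truncated so as to lie in $H^1_0(\RNp)$. The decisive gain over $b_\infty$ comes from the interaction $\int u_c\,\psi_r^p\,dx\sim e^{-r}$, which dominates the truncation error $\sim r^{-(N-1)/2}e^{-r}$ precisely because of the polynomial factor $r^{-(N-1)/2}$ in the decay of $\psi$ when $N\geq2$; this is where the dimensional restriction enters. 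Third, you do not address boundedness of Palais--Smale (or Cerami) sequences: the nonlinearity $g(x,s)$ is not uniformly of Ambrosetti--Rabinowitz type because of the background $u_c$, and the paper devotes Proposition~\ref{boundednessPSCerami} to a nontrivial argument based on the monotonicity in $s$ of $H(x,s)=\tfrac12 g(x,s)s-G(x,s)$. Finally, for $N\geq3$ your argument as written is confined to $p<2^*-1$; the paper obtains all $p>1$ by proving the result for $N=2$, where every exponent is subcritical, and extending solutions trivially in the remaining variables as in Remark~\ref{remark-th1}(d).
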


\begin{remark} \label{remark-th1}$ $
\begin{itemize}
\item[(a)] By a bounded positive solution to \eqref{mainProblem}-\eqref{cond-infinity}, we mean a positive function $v \in C^2(\RNp) \cap C(\overline{\RNp}) \cap L^{\infty}(\RNp)$ satisfying \eqref{mainProblem} in the pointwise sense and such that \eqref{cond-infinity} holds. \smallbreak
\item[(b)] The nonexistence part (ii) of Theorem~\ref{th1} is proved with a variant of the sliding method based on a comparison with $x_N$-translates of the function $x \mapsto w_0(x_N)$. We shall comment on this in more detail further below.  
\smallbreak
\item[(c)] 
As we have indicated above already, Theorem \ref{th1} (i) highlights the rich structure of solutions to (\ref{mainProblem})-(\ref{cond-infinity}) in the case $N \ge 2$, $0< c< \woo$, which is in striking contrast to the case $N =1$. Indeed, Theorem \ref{th1} (i) yields, in addition to the two one-dimensional profile solutions $x \mapsto w_0(x_N + t_{c,p})$ and $x \mapsto w_0(x_N - t_{c,p})$ at least one further geometrically distinct solution which is not merely a function of the $x_N$-variable. Hence, by the remarks above, this one solution gives rise to an infinite number of positive solutions via translations parallel to the boundary $\partial \R^N_+ = \R^{N-1}$. In Corollary~\ref{th1-existence-refined-multiplicity}  below, we shall derive more precise lower estimates on the number of geometrically distinct solutions depending on the exponent $p$ and the dimension $N$.
\smallbreak
\item[(d)] It suffices to prove Theorem \ref{th1} (i) in the case $N=2$ since every positive solution $v$ to \eqref{mainProblem}-\eqref{cond-infinity} in the case $N=2$ gives rise to a corresponding solution $\tilde v$ to \eqref{mainProblem}-\eqref{cond-infinity} in general dimension $N \ge 3$ by simply setting $\tilde v(x) = v(x_{1},x_{N})$. 
\smallbreak
\item[(e)] It remains as an interesting open question whether the function $x \mapsto w_0(x_N)$ is the unique bounded positive solution to (\ref{mainProblem})-(\ref{cond-infinity}) in the case $c=\woo$. At first glance, it seems natural to establish such a uniqueness result also with the help of a sliding argument as mentioned in (b) above, but additional difficulties appear in the case $c= \woo$, and non-uniqueness remains a possibility for now. 
\smallbreak 
\end{itemize}
\end{remark}

%\medbreak
%We first point out that the uniqueness and nonexistence parts (ii) and (iii) of Theorem~\ref{th1} are proved with a variant of the sliding method based on a comparison with $x_N$-translates of the function $x \mapsto w_0(x_N)$. We shall comment on this in more detail further below. 
%As we have indicated above already, Theorem \ref{th1} (i) highlights the rich structure of solutions to (\ref{mainProblem})-(\ref{cond-infinity}) in the case $N \ge 2$, $0< c< \woo$, which is in striking contrast to the case $N =1$. We also note that it suffices to prove Theorem \ref{th1} (i) in the case $N=2$ since every positive solution $v$ to \eqref{mainProblem}-\eqref{cond-infinity} in the case $N=2$ gives rise to a corresponding solution $\tilde v$ to \eqref{mainProblem}-\eqref{cond-infinity} in general dimension $N \ge 3$ by simply setting $\tilde v(x) = v(x_{1},x_{N})$. 
%We also point out that problem (\ref{mainProblem})-(\ref{cond-infinity}) is invariant under translations and rotations parallel to the boundary $\partial \R^N_+ = \R^{N-1}$. In particular, if $N \ge 2$, any positive solution $v$ to (\ref{mainProblem})-(\ref{cond-infinity}) which is not merely a function of the $x_N$-variable already gives rise to infinitely many solutions of the type $x \mapsto v(x' + \tau,x_N)$, $\tau \in \R^{N-1}$, where, here and in the following, we write $x = (x',x_N)$ for $x \in \R^N_{+}$ with $x' \in \R^{N-1}$.

The following result provides some information on the shape of the solutions we construct.

\begin{thm} \label{th1-existence-refined}
Let $ N \geq 2$, $1 < p < 2^*-1$ and $0 < c < \woo$. Then there exists a positive solution to (\ref{mainProblem})-(\ref{cond-infinity}) of the form
  \begin{equation}
    \label{eq:solution-ansatz}
  x \mapsto w_0(x_N + t_{c,p}) + u(x),
  \end{equation}
  with a nonnegative function $u \in H^1_0(\R^N_+) \setminus \{0\} $.
\end{thm}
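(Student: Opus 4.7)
The plan is to set $W(x) := w_0(x_N + t_{c,p})$, so that $W$ solves the one-dimensional equation $-W'' + W = W^p$ with $W|_{\partial \RNp} = c$ and $W(x_N) \to 0$ as $x_N \to +\infty$. The ansatz $v = W + u$ reduces the claim to the existence of a nontrivial nonnegative $u \in H^1_0(\RNp)$ satisfying
\begin{equation*}
-\Delta u + u = (W + u)^p - W^p \quad \text{in } \RNp.
\end{equation*}
Setting $f(x, u) := (W(x) + u_+)^p - W(x)^p$ and $F(x, u) := \int_0^u f(x, s)\, ds$, I look for $u$ as a nontrivial critical point of the $C^1$ functional
\begin{equation*}
I(u) := \tfrac{1}{2}\int_{\RNp}\bigl(|\nabla u|^2 + u^2\bigr)\,dx - \int_{\RNp} F(x,u)\,dx
\end{equation*}
on $H^1_0(\RNp)$; the condition $1 < p < 2^* - 1$ provides the Sobolev embedding into $L^{p+1}(\RNp)$ needed for $I \in C^1$. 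Nonnegativity of any nontrivial critical point is automatic from testing $I'(u) = 0$ against $u_-$, since $f \equiv 0$ on $(-\infty, 0]$.

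For mountain-pass geometry at the origin, the Taylor expansion of $I$ yields the linearized quadratic form $\langle Lu, u\rangle = \int(|\nabla u|^2 + u^2) - p\int W^{p-1} u^2$. Since $W$ depends only on $x_N$, Fourier decomposition in $x'$ reduces coercivity of $L$ on $H^1_0(\RNp)$ to positivity of the one-dimensional Schr\"odinger operator $L_0 := -\partial_{x_N}^2 + 1 - p W^{p-1}$ on $H^1_0((0, +\infty))$. The key observation is that differentiating the 1D equation shows that $\psi(x_N) := -w_0'(x_N + t_{c,p})$ lies in $\ker L_0$ and is strictly positive on $[0, +\infty)$ (as $w_0$ is strictly decreasing on $(0, +\infty)$). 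A Barta-type argument (writing $u = \psi \eta$ and integrating by parts) then gives $\langle L_0 u, u\rangle \geq \int \psi^2 |\eta'|^2\, dx_N > 0$ on $C_c^\infty((0, +\infty)) \setminus \{0\}$; since the essential spectrum $\sigma_{\mathrm{ess}}(L_0)$ starts at $1$, we conclude $L_0 \geq \lambda_1 > 0$, hence $L$ is coercive on $H^1_0(\RNp)$. Together with $I(t\varphi) \to -\infty$ as $t \to +\infty$ for any nontrivial $\varphi \geq 0$ (a consequence of the superquadratic bound $F(x, u) \geq u_+^{p+1}/(p+1)$, obtained from the inequality $(W+s)^{p+1} - W^{p+1} - (p+1)W^p s \geq s^{p+1}$ proved by differentiating in $W$ and using convexity of $x \mapsto x^p$), the mountain-pass geometry is in place.

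The hard part will be to recover compactness in the presence of two invariances: the $x'$-translations, under which $I$ is invariant (since $W$ depends only on $x_N$), and the translation $x_N \to +\infty$, under which the equation becomes the classical $\RN$ problem $-\Delta u + u = u_+^p$, whose unique radial ground state $U_\infty$ has positive energy $c_\infty$. To exclude escape in the $x_N$-direction, I would prove the sharp bound $c^* < c_\infty$ on the mountain-pass level $c^*$ of $I$: test $I$ on the ray $\{t\phi_T : t \geq 0\}$ with $\phi_T := \chi_T \cdot U_\infty(\cdot - T e_N) \in H^1_0(\RNp)$ for $T$ large ($\chi_T$ a suitable smooth cutoff). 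The strict version $(W + s)^{p+1} - W^{p+1} - (p+1) W^p s > s^{p+1}$ for $W, s > 0$ of the above inequality gives $F(x, s) > s^{p+1}/(p+1)$ on $\{W > 0\}$; combined with the exponential decay of $W$ in $x_N$ and of $U_\infty$ in all variables, this yields $\max_{t \geq 0} I(t\phi_T) < \max_{t \geq 0} I_\infty(tU_\infty) = c_\infty$, where $I_\infty$ denotes the $\RN$-energy.

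A standard Lions-type concentration-compactness argument then upgrades this level estimate to compactness of Palais--Smale sequences at level $c^*$. A bounded sequence $(u_n) \subset H^1_0(\RNp)$ with $I(u_n) \to c^*$ and $I'(u_n) \to 0$ weakly converges to some $u$; if $u \neq 0$ we are done, and otherwise Lions' lemma produces a concentration sequence $\tau_n = (\tau_n', \tau_{n,N}) \in \RNp$. The case $\tau_{n,N} \to +\infty$ is excluded: a Brezis--Lieb splitting after translating by $\tau_n$ would produce a nontrivial solution on $\RN$ with energy $\geq c_\infty$, contradicting $c^* < c_\infty$. Hence $\tau_{n,N}$ is bounded, and translating in $x'$ (which preserves $I$) produces the desired nontrivial nonnegative weak limit solving the equation for $u$. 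Standard elliptic regularity then shows that $v = W + u$ is a positive bounded classical solution of \eqref{mainProblem}-\eqref{cond-infinity} of the required form.
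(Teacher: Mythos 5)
Your overall strategy — the ansatz $v = W + u$ with $W(x) = w_0(x_N + t_{c,p})$, the truncated nonlinearity, the mountain-pass framework, the level estimate against the $\R^N$ problem, and the concentration-compactness endgame — is exactly the paper's route. Two remarks; the first is a genuine gap.

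\textbf{The level estimate $c^* < c_\infty$ is not established.} Your qualitative strict inequality
$(W + s)^{p+1} - W^{p+1} - (p+1)W^p s > s^{p+1}$ for $W, s > 0$
only tells you $I(t\phi_T) < I_\infty^+(t\phi_T)$; it does not produce a quantitative margin that can beat the error introduced by truncating $U_\infty(\cdot - Te_N)$ to fit into $H^1_0(\RNp)$. What is needed is a lower bound of the type
\begin{equation*}
(a+b)^{p+1} - a^{p+1} - b^{p+1} \ge (p+1)a^p b + \kappa_{p+1}\, a\, b^{p},\qquad a,b \ge 0,
\end{equation*}
(the paper's Lemma~\ref{inequality-q-refined}), which turns the nonlinear gain along your ray into
$\gtrsim t^p\int_{\RNp} W\,\phi_T^p\,dx \gtrsim e^{-T}$,
using $W(Te_N) \sim e^{-T}$. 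On the other side, the cutoff loss is governed by the size of $U_\infty$ at distance $\sim T$ from its center, and there the \emph{sharp} decay $U_\infty(x) \le C\,|x|^{-\frac{N-1}{2}}e^{-|x|}$ is decisive: the loss is $\lesssim T^{-\frac{N-1}{2}} e^{-T}$. Both gain and loss are $e^{-T}$ to leading exponential order; only the polynomial factor $T^{-\frac{N-1}{2}}$, which is nontrivial precisely because $N\ge 2$, lets the gain dominate. Your sketch says ``exponential decay in all variables yields the strict inequality'' but never extracts this comparison, and so as written does not prove $c^* < c_\infty$. The paper makes this precise by the cleaner truncation $\psi_r := (\psi(\cdot - re_N) - \eps_r)^{+}$ (no smooth cutoff), the bound $\eps_r \le C r^{-\frac{N-1}{2}}e^{-r}$, and Lemmas~\ref{lemma1-energy-estimate}--\ref{lemma3-energy-estimates}, with $N\ge 2$ used explicitly at the end of Proposition~\ref{main-proposition-energy-estimate}. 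This is the hardest step of the proof and cannot be left at the hand-waving level.

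\textbf{Boundedness of the minimizing sequence is not justified.} You pass from ``a Palais--Smale sequence at level $c^*$'' to ``a bounded sequence $(u_n)$'' without comment. For this non-autonomous nonlinearity the superquadraticity is degraded by the $W^p u$-term, and boundedness is not free; the paper works with Cerami sequences and devotes Proposition~\ref{boundednessPSCerami} (a vanishing / non-vanishing dichotomy) to showing that these are bounded. This step needs to be included.

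As a side note, your Barta-type proof of coercivity of the linearized form, using $\psi = -W' > 0$ on $[0,\infty)$ (positivity holds because $t_{c,p}>0$) together with $\sigma_{\mathrm{ess}}(L_0) = [1,\infty)$, is a nice and essentially equivalent alternative to the paper's Proposition~\ref{positive-definite-main-proposition}, which rides on the same function $w_* = -u_c'$ but closes via a minimizing-sequence and integral-identity argument rather than a spectral one.
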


\medbreak
By the remarks above and since all exponents $p<\infty$ are subcritical in the case $N=2$, Theorem \ref{th1-existence-refined} implies Theorem \ref{th1} (i) in the case $N=2$ and therefore for all $N \ge 2$. It also allows us to distinguish different solution orbits under translations and rotations in $\R^{N-1}$.

\begin{cor} \label{th1-existence-refined-multiplicity}
Let $N \geq 3$, $1 < p < \frac{M+2}{M-2}$ for some $M \in \{3,\dots,N\}$ and $0 < c < \woo$. Then problem (\ref{mainProblem})-(\ref{cond-infinity}) admits at least $M+1$ geometrically distinct positive solutions.
\end{cor}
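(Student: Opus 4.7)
The plan is to construct $M+1$ solutions by applying Theorem~\ref{th1-existence-refined} in lower-dimensional half-spaces and lifting, then to separate orbits via a translation-invariance dimension count. Since the map $k \mapsto \frac{k+2}{k-2}$ is strictly decreasing on $(2,\infty)$ and equals $2^*-1$ in dimension $k$ for $k\ge 3$ (with $2^*-1=\infty$ by convention when $k=2$), the hypothesis $p < \frac{M+2}{M-2}$ ensures that $p$ is subcritical in every dimension $M' \in \{2,\ldots,M\}$. Hence Theorem~\ref{th1-existence-refined} applied in $\R^{M'}_+$ furnishes, for each such $M'$, a positive solution of the form
$$
v_{M'}(y) = w_0(y_{M'} + t_{c,p}) + u_{M'}(y),\qquad y\in \R^{M'}_+,
$$
with $0\le u_{M'} \in H^1_0(\R^{M'}_+) \setminus \{0\}$.

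Next, I would lift each $v_{M'}$ to $\R^N_+$ by setting $\tilde v_{M'}(x_1,\ldots,x_N) := v_{M'}(x_1,\ldots,x_{M'-1},x_N)$. Since $\tilde v_{M'}$ does not depend on $x_{M'},\ldots,x_{N-1}$, the $\R^N$-Laplacian of $\tilde v_{M'}$ reduces to the $\R^{M'}$-Laplacian of $v_{M'}$, so $\tilde v_{M'}$ solves \eqref{mainProblem}-\eqref{cond-infinity}. Together with the two one-dimensional profile solutions $x \mapsto w_0(x_N \pm t_{c,p})$ from Proposition~\ref{prop-N=1}, trivially extended to $\R^N_+$, this yields exactly $(M-1) + 2 = M+1$ bounded positive solutions.

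To show these $M+1$ solutions are pairwise geometrically distinct, I would introduce the invariant
$$
\mathfrak{d}(v) := \dim\bigl\{\tau \in \R^{N-1} : v(x'+\tau,x_N) = v(x',x_N)\ \text{for all } x\in \R^N_+\bigr\},
$$
the dimension of the horizontal translation stabilizer of $v$, which is manifestly preserved under translations and rotations in $\R^{N-1}$. The two one-dimensional solutions both satisfy $\mathfrak{d}=N-1$ and each sits in a singleton orbit, so they form two distinct orbits. For $\tilde v_{M'}$, the bound $\mathfrak{d}(\tilde v_{M'}) \ge N-M'$ is immediate from the construction. The only non-routine step, and the main obstacle I foresee, is the reverse inequality $\mathfrak{d}(\tilde v_{M'}) \le N-M'$: I would argue that invariance of $u_{M'}$ under a nonzero horizontal translation $\sigma \in \R^{M'-1}$ would force $u_{M'}$ to be periodic along $\sigma$, which is incompatible with $0\ne u_{M'} \in L^2(\R^{M'}_+)$ since a nonzero function periodic along a horizontal direction has infinite $L^2$-norm on $\R^{M'}_+$ by Fubini. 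This yields $\mathfrak{d}(\tilde v_{M'}) = N - M'$ exactly, so the values $N-2, N-3, \ldots, N-M$ realised by the $\tilde v_{M'}$, combined with the two distinct $\mathfrak{d}=N-1$ one-dimensional solutions, produce $M+1$ pairwise distinct orbits, as required.
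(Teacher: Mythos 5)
Your construction --- lifting the solutions furnished by Theorem~\ref{th1-existence-refined} in $\R^{\widetilde N}_+$, $\widetilde N \in \{2,\dots,M\}$, to $\R^N_+$ and adjoining the two one-dimensional profiles --- is precisely the argument in the paper, so the approach is the same. Where the paper merely asserts that these $M+1$ solutions are ``clearly'' geometrically distinct, your stabilizer-dimension invariant $\mathfrak{d}$, combined with the Fubini/$L^2$ argument ruling out a nontrivial horizontal period of $u_{M'}$, supplies a correct and clean justification of that step.
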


This result is a rather immediate corollary of Theorem~\ref{th1-existence-refined}. Indeed, under the given assumptions, for every dimension ${\widetilde N} \in \{2,3,\dots,M\}$, Theorem \ref{th1-existence-refined}  yields the existence of a solution to (\ref{mainProblem})-(\ref{cond-infinity}) of the form
$$ 
x \mapsto w_0(x_{\widetilde{N}} + t_{c,p}) + u(x_1,\dots,x_{{\widetilde N}-1},x_{\widetilde{N}})
$$
with a nonnegative $u \in H^1_0(\R^{{\widetilde N}}_+) \setminus \{0\}$. Clearly, these $M-1$ solutions are geometrically distinct, and they are also geometrically distinct from the two one-dimensional profile solutions.

\medbreak
It is natural to guess that the change of the solution set when passing from $c>\woo$ to $c<\woo$ is a bifurcation phenomenon. More precisely, one may guess that the solutions constructed in Theorem~\ref{th1-existence-refined} have the property that $u = u_c \to 0 \in H^1_0(\R^N_+)$ as $c \nearrow \woo$ for the functions $u$ in the ansatz (\ref{eq:solution-ansatz}). This remains an open question, and the answer could even depend on the value of $p$. We note that standard results from bifurcation theory do not apply here since the linearized problem 
$$
\left\{
\begin{aligned}
 -\Delta v + v - p |w_0(x_N)|^{p-1} v & = 0 \quad & \textup{ in } \RNp, \\  v& = 0 & \quad \textup{ on } \partial \RNp  ,
 \end{aligned}
 \right.
$$
at the parameter value $c = c_p$ has purely essential spectrum due to its invariance with respect to translations in directions parallel to the boundary $\partial \R^N_+ = \R^{N-1}$. Bifurcation from the essential spectrum has been observed succesfully in other contexts (see e.g. the survey paper \cite{stuart:1997} and the references therein), but  
there is still no general functional analytic framework which provides sufficient abstract conditions. 
 
\medbreak 
We now give some ideas of the proof of Theorem \ref{th1-existence-refined}. For this we fix $c \in (0,\woo)$ and define the functions
\[
t \mapsto z_c(t):= w_0\left(t + t_{c,p}\right)
\qquad \textup{ and } \qquad t \mapsto \widetilde{z}_c(t):= w_0\left(t -t_{c,p}\right),
\]
where $t_{c,p}$ is given in (\ref{eq:def-t-c-p}). We recall that $z_c$ and $\widetilde{z}_c$ are the unique positive solutions to \eqref{1dim} such that $z_c(0) = \widetilde{z}_c(0) = c$. Moreover, we define $u_c: \overline{\RNp} \to \R$  and $\widetilde{u}_c : \overline{\RNp} \to \R$ as
\begin{equation} \label{u0}
u_c(x) := z_c(x_N) \quad \textup{ and } \quad \widetilde{u}_c(x):= \widetilde{z}_c(x_N),
\end{equation}
and we directly notice that $u_c$ and $\widetilde{u}_c$ are both solutions to \eqref{mainProblem}-\eqref{cond-infinity}. Furthermore, it follows that
\begin{equation} \label{u0-u0tilda-infinity}
u_c(x) = O(e^{-x_N}) \quad \textup{ and } \quad \widetilde{u}_c(x) = O(e^{-x_N}), \quad \textup{ as } x_N \to \infty.
\end{equation}
Proving Theorem~\ref{th1-existence-refined} now amounts to find a nonnegative solution $u \in H_0^1(\RNp) \setminus \{0\}$ to the non-autonomous Schr\"odinger type equation
\begin{equation} \label{non-autonomous}
-\Delta u + u = f(x,u), \quad u \in H_0^1(\RNp),
\end{equation}
with
\begin{equation} \label{f}
f(x,s):= |u_c(x)+s|^{\,p-1}(u_c(x)+s) - (u_c(x))^{\,p},
\end{equation}
because in this case $v = u_c + u$ is of the form (\ref{eq:solution-ansatz}), solves (\ref{mainProblem}) and it is easy to see that also satisfies \eqref{cond-infinity}. Since we are interested in finding non-negative solutions to \eqref{non-autonomous}, we truncate the nonlinearity and define
\begin{equation}
  \label{def-g}
g(x,s):= (u_c(x)+s^+)^{\,p} - (u_c(x))^{\,p} = \left\{
\begin{aligned}
& f(x,s), \quad & \textup{ if } s \geq 0,\\
& 0, & \textup{ if } s \leq 0,
\end{aligned}
\right.
\end{equation}
with $f$ given in \eqref{f}. We then consider the auxiliary problem
\begin{equation} \label{non-autonomous-positive}
-\Delta u + u = g(x,u), \quad u \in H_0^1(\RNp).
\end{equation}
Considering $u^{-} \in H_0^1(\RNp)$ as test function in \eqref{non-autonomous-positive}, one can easily check that every solution to \eqref{non-autonomous-positive} is non-negative and so, that every solution to \eqref{non-autonomous-positive} is a non-negative solution to \eqref{non-autonomous}.
It might be worth pointing out that the one-dimensional function $\widetilde{u}:= \widetilde{u}_c - u_c \in C^2(\RNp)$ is a positive solution to
the equation in (\ref{non-autonomous-positive}) and also satisfies $\widetilde{u}  = 0$ on $\partial \RNp$. However, $\widetilde{u} \not  \in H_0^1(\RNp)$ since it only depends on the $x_N$ variable. Hence, $\widetilde{u}$ is not a solution to \eqref{non-autonomous-positive}. 

We shall look for a non-trivial solution to \eqref{non-autonomous-positive} as a critical point of the associated functional
\begin{equation} \label{E}
E: H_0^1(\RNp) \to \R,\qquad  E(u) = \frac{1}{2} \int_{\RNp} \left( |\nabla u|^2 + u^2\right) dx - \int_{\RNp} G(x,u) dx,
\end{equation}
where
\begin{equation}
  \label{eq:def-G}
G(x,u):= \int_0^u g(x,s) ds = \frac{1}{p+1}\bigl( (u_c+u^{+})^{p+1}-u_c^{p+1} - (p+1) u_c^p u^{+} \bigr).  
\end{equation}
More precisely, we are going to prove the existence of a non-trivial critical point of mountain pass type. This requires new and %somewhat
subtle estimates. 
%First, we mention that for proving that zero is a strict local minimum of $E$ we need to argue by contradiction and rely eventually on the strict monotonicity of the function $u_c$ in the $x_N$-direction. 
%More importantly,
The key difficulties in the variational approach are the non-standard shape of the nonlinearity $g$ in (\ref{non-autonomous-positive}) and the lack of compactness due to the unboundedness of $\RNp$. To overcome these difficulties, we need new estimates
within the analysis of Cerami sequences and for comparing the mountain pass energy value for $E$ with the corresponding one of the limit energy functional
\begin{equation} \label{Einfty-intro}
E_\infty: H^1(\R^N) \to \R, \qquad E_{\infty}(u) = \frac{1}{2}\int_{\RN} \big( |\gradu|^2 + u^2\big)\,dx - \frac{1}{p+1} \int_{\RN} |u|^{\,p+1} dx.
\end{equation}
In particular, we shall use the asymptotic decay properties of the unique positive radial solution to (\ref{problem-RN-intro}) in order to build suitable test functions.

We wish to mention two further open problems at this stage. First, one may ask whether Theorem~\ref{th1-existence-refined} extends to the critical case $N \ge 3$, $p=2^*-1$. In this case, the mountain pass geometry of the functional $E$ remains, but the lack of compactness is more severe as it is not only caused by the unboundedness of $\R^N_+$ but also by possible point concentration of bounded Cerami sequences. Indeed, while the limit energy functional $E_\infty$ in (\ref{Einfty-intro}) does not admit critical points in the case $p=2^*$ by Pohozaev's identity (see e.g. \cite[Corollary B.4]{Wi1996}), rescaling bounded Cerami sequences around possible concentration points leads to critical points of the Yamabe functional  $u \mapsto \frac{1}{2}\int_{\RN}  |\gradu|^2 \,dx - \frac{1}{2^*} \int_{\RN} |u|^{2^*} dx$. It therefore seems natural to build test functions from translated and concentrated instantons $x \mapsto [N(N-2)\eps^2]^{\frac{N-2}{4}} \bigl(\eps^2+|x-x_*|^2\bigr)^{-\frac{N-2}{2}}$ in order to estimate the mountain pass energy. However, since our estimates rely on the precise exponential decay rate of the unique positive radial solution to (\ref{problem-RN-intro}) as given in (\ref{behaviour-gs-infinity}), they do not apply to instantons. Hence the case $p=2^*$ remains a problem for future research.

The second open problem concerns the existence of solutions of the form $x \mapsto w_0(x_N - t_{c,p}) + u(x)$ with $u \in H^1_0(\R^N_+) \setminus \{0\}$. In this case, $u_c$ has to be replaced by $\widetilde{u}_c$ in the definition of the nonlinearity $G$ in (\ref{eq:def-G}). One may observe that the mountain pass geometry is lost in this case, and the presence of essential spectrum leads to additional difficulties which seem hard to deal with.

\medbreak  
 
We now comment on the proofs of the non-existence part (ii) of Theorem \ref{th1}. We argue by contradiction and  use a suitable modification of the so-called sliding method introduced by H. Berestycki and L. Nirenberg and further developed by H. Berestycki, L. Caffarelli and L. Nirenberg among others. Specifically, our proofs  are inspired by \cite[Section 2]{Fa2003} and \cite[Section 4]{BeHaMo2000}.

\medbreak

We finally comment on the boundedness of positive solutions to \eqref{mainProblem}-\eqref{cond-infinity}. As stated in the following proposition, all the positive solutions to \eqref{mainProblem}-\eqref{cond-infinity} are bounded in the case where $1 < p < 2^{*}-1$. Hence, the fact that we are considering bounded solutions is not a restriction in this case.

\begin{prop} \label{boundedness-of-the-solutions}
Let $N \geq 2$, $1 < p < 2^*-1$ and $c > 0$. Any positive $v \in C^2(\RNp) \cap C(\overline{\RNp})$-solution to \eqref{mainProblem}-\eqref{cond-infinity} belongs to $L^{\infty}(\RNp)$.
\end{prop}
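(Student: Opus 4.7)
The plan is a blow-up argument in the spirit of Gidas--Spruck combined with the Pol\'a\v{c}ik--Quittner--Souplet doubling lemma, exploiting the subcriticality $1<p<2^*-1$. I would argue by contradiction: assume $v$ is unbounded on $\RNp$. By \eqref{cond-infinity} there exists $R>0$ with $v(x',x_N)\le 1$ whenever $x_N\ge R$, so any sequence along which $v$ diverges must have its $x_N$-component bounded.

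Setting $M(x):=v(x)^{(p-1)/2}$, which is continuous and positive on $\overline{\RNp}$, I would apply the Pol\'a\v{c}ik--Quittner--Souplet doubling lemma (with $D=\overline{\RNp}$ and $\Gamma=\emptyset$) to extract a sequence $x_n\in\overline{\RNp}$ such that $\mu_n:=M(x_n)\to\infty$ and
\[
v(z)\le 2^{2/(p-1)}v(x_n)\qquad\text{for every } z\in\overline{\RNp} \text{ with } |z-x_n|\le n/\mu_n.
\]
Rescaling via $w_n(y):=v(x_n+y/\mu_n)/v(x_n)$, defined for $y_N\ge-\mu_n x_{n,N}$, a direct computation gives
\[
-\Delta w_n+\mu_n^{-2}w_n=w_n^{\,p},\qquad w_n(0)=1,\qquad 0\le w_n\le 2^{2/(p-1)} \text{ on } B_n(0)\cap\{y_N\ge-\mu_n x_{n,N}\},
\]
together with $w_n(y',-\mu_n x_{n,N})=c/v(x_n)\to 0$ along the rescaled flat boundary.

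Passing to a subsequence, $\mu_n x_{n,N}\to\tau\in[0,\infty]$. Since $w_n$ and $w_n^p$ are uniformly bounded and the Dirichlet data are uniformly small, standard interior and boundary $C^{2,\alpha}$ estimates yield $w_n\to w_\infty$ in $C^2_{loc}$ on the corresponding limit domain. If $\tau=\infty$ then the limit domain is $\R^N$ and $w_\infty$ is a nontrivial nonnegative solution of $-\Delta w_\infty=w_\infty^{\,p}$ on $\R^N$, contradicting Gidas--Spruck's Liouville theorem in the subcritical range. If $\tau\in(0,\infty)$ then, after translating in $y_N$, $w_\infty$ is a bounded nontrivial nonnegative solution of $-\Delta w=w^p$ on $\RNp$ with zero Dirichlet datum, contradicting the corresponding half-space Liouville theorem (e.g.\ Gidas--Spruck, Berestycki--Caffarelli--Nirenberg) in the same range. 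In the remaining case $\tau=0$, the uniform boundary $C^1$ bound on $w_n$ gives $|w_n(0)-w_n(0,-\mu_n x_{n,N})|\le C\mu_n x_{n,N}\to 0$, while $w_n(0,-\mu_n x_{n,N})=c/v(x_n)\to 0$, contradicting $w_n(0)=1$.

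The main obstacle is engineering the blow-up so that the rescaled profile is uniformly bounded on ever-larger balls: a naive choice of blow-up sequence loses control of $v$ at neighbouring points, so the limiting Liouville problem cannot be set up. The doubling lemma is precisely the device that supplies this compactness and constitutes the key technical input. A secondary subtlety is the boundary case $\tau=0$, where the rescaling centre drifts toward $\partial\RNp$ faster than the natural scale $1/\mu_n$; this is handled by transporting the smallness of the Dirichlet datum across the shrinking distance to the boundary via boundary Schauder estimates.
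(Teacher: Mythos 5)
Your proposal is correct and takes essentially the same approach as the paper: both argue by contradiction, apply the Pol\'a\v{c}ik--Quittner--Souplet doubling lemma to $v^{(p-1)/2}$, rescale around the resulting blow-up points, and split according to whether the rescaled distance $\mu_n x_{n,N}$ to the boundary tends to infinity, to a positive finite limit, or to zero, invoking the Gidas--Spruck Liouville theorems in $\R^N$ and in the half-space for the first two cases and a boundary gradient estimate to rule out the third. The only differences are presentational (your upfront remark on boundedness of $x_{n,N}$, and $C^{2,\alpha}$ versus local $W^{2,q}$ and $C^{1,\beta}$ estimates).
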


The proof of Proposition~\ref{boundedness-of-the-solutions} follows by a rather standard blow up argument based on the doubling lemma by P. Pol\'{a}\v{c}ik, P. Quittner and P. Souplet in \cite{PoQuiSou2007}. For the convenience of the reader, we include the proof in Section~\ref{section-existence} below.

\subsection*{Organization of the paper}
%The paper is organized as follows. 
In Section~\ref{section-preliminaries}, we collect estimates related to the nonlinearity $g$ in (\ref{def-g}) and the functional $E$ associated with (\ref{non-autonomous-positive}). With the help of these estimates, we establish the mountain pass geometry of $E$ in Section~\ref{sec:mount-pass-geom}, and we show that Cerami sequences at nontrivial energy levels are bounded and admit nontrivial weak limits after suitable translation. In Section~\ref{section-energy-estimate}, we then prove a key energy estimate which shows that, in dimensions $N \ge 2$, the mountain pass energy of the functional $E$ is strictly smaller than the corresponding one for the limit energy functional $E_\infty$ given in (\ref{Einfty-intro}). With the help of this energy estimate, we then complete the proof of Theorem~\ref{th1-existence-refined} in Section~\ref{section-existence}. Finally, we give the proof of Theorem~\ref{th1} (ii) in Section~\ref{section-non-existence-uniqueness}.

\subsection*{Notation} For  $1\leq p<\infty,$ we let $\|\cdot \|_{L^p(\RNp)}$ denote the standard norm on the usual Lebesgue space $L^p(\RNp)$. The Sobolev space $H_0^1(\RNp)$ is endowed with the standard norm
\[ \|u\|^2 = \int_{\RNp} \big(\,|\gradu|^2 + |u|^2 \big)\, dx.\]
Also, for a function $v$, we define $v^{+} := \max\{v,0\}$ and $v^{-}:= \max\{-v,0\}$ and we write $x = (x',x_N)$ for $x \in \RNp$ with $x' \in \R^{N-1}$. We denote by $'\rightarrow'$, respectively by $'\rightharpoonup'$, the strong convergence, respectively the  weak convergence in corresponding space and denote by $B_R(x)$ the open ball in $\R^N$ of center $x$ and radius $R>0.$ Also, we shall denote by $C_i>0$ different constants which may vary from line to line but are not essential to the analysis of the problem. Finally, at various places, we have to distinguish the cases $p \le 2$ and $p>2$. For this it is convenient to introduce the special constant
$$
1_{\{p>2\}}:= \left\{
  \begin{aligned}
    &0,&&\qquad p \le 2,\\
    &1,&&\qquad p > 2.
  \end{aligned}
\right.
$$ 

\subsection*{Acknowledgements}  The authors wish to thank the anonymous referees for their valuable comments and corrections. Part of this work was done while the first  author was visiting the Goethe-Universit\"at Frankfurt. He wishes to thank his hosts for the warm hospitality and the financial support. 

\section{Preliminaries} \label{section-preliminaries}

\noindent In this section we collect some estimates related to the transformed nonlinearity $g$ defined in (\ref{def-g}), its primitive $G$ and the functional $E$ defined in (\ref{E}). For this we fix, throughout Sections~\ref{section-preliminaries}--Section~\ref{section-existence}, $c \in (0,\woo)$, $p \in (1,2^{*}-1)$, and we let $u_c$ be given in
(\ref{u0}). We recall that we have the uniform estimate
\begin{equation}
\label{u-0-uniform-est}
0 \le u_c \le c_p \qquad \text{in  $\RNp$.}
\end{equation}
We start with an elementary inequality for nonnegative real numbers which will be used in the energy estimates in Section \ref{section-energy-estimate} below.

\begin{lem} \label{inequality-q-refined}
For every $q > 2$ there exists $\kappa_q>0$ with 
\begin{equation}
  \label{elem-lemma-claim}
  (a+b)^q - a^q-b^q \geq qa^{q-1}b +\kappa_q a b^{q-1} \qquad \text{for all $a,b \ge 0$.}
\end{equation}
\end{lem}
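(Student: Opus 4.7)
The plan is to use the scale-invariance of the inequality to reduce to a one-variable problem, and then to control the resulting function via asymptotic expansions at the two endpoints and a continuity/compactness argument in between. Both sides of the inequality are homogeneous of degree $q$ in $(a,b)$, and the claim is trivial when $a = 0$ or $b = 0$ (both sides reduce to $0$). So I would divide by $b^q$ and set $s = a/b > 0$, reducing the task to finding $\kappa_q > 0$ such that
\[
R(s) := \frac{(1+s)^q - 1 - s^q - q s^{q-1}}{s} \geq \kappa_q \qquad \text{for all } s > 0.
\]

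Next, I would establish such a positive lower bound on $R$ using three pieces of information. First, the Taylor expansion $(1+s)^q = 1 + qs + \binom{q}{2} s^2 + O(s^3)$ near $s = 0$, combined with $s^{q-1} = o(s)$ and $s^q = o(s)$ as $s \to 0^+$ (both of which follow from $q > 2$), yields $R(s) \to q$ at the left endpoint. Second, for large $s$ the binomial expansion $(1+s)^q = s^q + q s^{q-1} + \binom{q}{2} s^{q-2} + O(s^{q-3})$ gives
\[
(1+s)^q - 1 - s^q - q s^{q-1} = \binom{q}{2} s^{q-2} + O(s^{q-3}) - 1,
\]
so that $R(s) \sim \binom{q}{2} s^{q-3}$ as $s \to \infty$. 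Third, on any compact subinterval $[\eta, M] \subset (0,\infty)$ the function $R$ is continuous and strictly positive (strict positivity of the numerator being a consequence of the strict convexity of $x \mapsto x^q$: by the mean-value theorem applied twice, $(1+s)^q - s^q - q s^{q-1}$ is strictly increasing in $s$ with value $1$ at $s=0$), so $R$ admits a positive infimum on $[\eta, M]$. Taking $\kappa_q$ strictly smaller than the positive infimum of $R$ on $(0,\infty)$ obtained by combining the three bounds gives the desired constant.

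The main obstacle is the behaviour as $s \to \infty$: after dividing by $s$ the leading positive contribution decays like $s^{q-3}$, so one has to exhibit an explicit positive lower bound for this tail and verify it is compatible with both the near-zero limit $q$ and the bulk infimum. Tracking the binomial constants and the exponent $q-3$ here is the delicate point of the estimate; once this is achieved, the piece-wise bounds glue together to produce a uniform $\kappa_q > 0$.
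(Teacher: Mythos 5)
Your homogeneity reduction is correct, and so are both asymptotics: $R(s)\to q$ as $s\to 0^+$ and $R(s)\sim\binom{q}{2}s^{q-3}$ as $s\to\infty$. But the step you yourself flag as delicate---producing a \emph{positive} lower bound on the tail---has not been carried out, and in fact it cannot be: for $2<q<3$ the exponent $q-3$ is negative, so $R(s)\to 0$ as $s\to\infty$ and $\inf_{s>0}R(s)=0$. Hence no $\kappa_q>0$ makes the inequality hold for \emph{all} $a,b\ge 0$ in that range of $q$. Concretely, with $b=1$ and $a\to\infty$ one has $(a+1)^q-a^q-1-qa^{q-1}=\binom{q}{2}a^{q-2}+O(a^{q-3})-1=o(a)$, while $ab^{q-1}=a$, so the admissible $\kappa_q$ must tend to $0$. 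Pushed through carefully, your approach therefore \emph{disproves} the lemma as stated for $q\in(2,3)$ rather than proving it.

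What your analysis does prove is the restricted inequality for $a\le b$ (equivalently $s\le 1$), since $R$ is continuous and positive on $(0,1]$ with $R(0^+)=q$. That is precisely the case the paper's own proof treats, though by a different route: it fixes $a>0$, sets $\ell(t)=(a+t)^q-a^q-t^q-qa^{q-1}t$, and for $b\ge a$ bounds $\ell(b)=\int_0^b\ell'(t)\,dt$ from below by splitting the integral at $t=a$, invoking the convexity inequality $(s+t)^{q-1}-s^{q-1}-t^{q-1}\ge (q-1)ts^{q-2}-t^{q-1}$ for $s\ge t>0$, and then applying Young's inequality. The paper then asserts that the general case follows ``by symmetry'', but this step is likewise invalid: the right-hand side $qa^{q-1}b+\kappa_q ab^{q-1}$ is not symmetric in $(a,b)$, and for $a>b$ the swapped bound $qab^{q-1}+\kappa_q a^{q-1}b$ is strictly \emph{smaller} whenever $\kappa_q<q$. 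So both your argument and the paper's argument in fact establish the inequality only under the additional restriction $b\ge a$; the unrestricted statement is false for $q\in(2,3)$.
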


\begin{remark}
\label{elem-est-remark}
If $q \ge 3$, then (\ref{elem-lemma-claim}) holds in symmetric form with $\kappa_q = q$, see e.g. \cite[Theorem 1]{Ja2014}. If $q \in (2,3)$, it is easy to see that one has to choose $\kappa_q < q$.
\end{remark}

\begin{proof}[Proof of Lemma~\ref{inequality-q-refined}]
  We first note that, since $q-1>1$, we have, by convexity of the function $\tau \mapsto (1+\tau)^{q-1}$, 
  \begin{equation}
    \label{eq:first-est-elem-lemma}
    (s+t)^{q-1}-\bigl(s^{q-1}+t^{q-1}\bigr)= s^{q-1}\Bigl[\bigl(1+\frac{t}{s}\bigr)^{q-1}- \bigl(1+\bigl(\frac{t}{s}\bigr)^{q-1}\bigr)\Bigr] \ge s^{q-1}\Bigl((q-1)\frac{t}{s} - \bigl(\frac{t}{s}\bigr)^{q-1}\Bigr)
    = (q-1)ts^{q-2} - t^{q-1}
  \end{equation}
  for $s \ge t >0$. Now, to prove the claim, it suffices to consider $a,b>0$, since the inequality holds trivially if $a=0$ or $b=0$. Moreover, it suffices to prove that the inequality holds for $b \ge a>0$ with some $\kappa_q \in (0,q]$, since then it also follows for arbitrary $a,b >0$. For fixed $a>0$, we consider the function
  $$
  \ell: [0,\infty) \to \R, \qquad \ell(t)= (a+t)^q - a^q-t^q - qa^{q-1}t
  $$
  Then we have $\ell(0)=0$ and 
  $$
  \ell'(t)= q \Bigl[(a+t)^{q-1}-\bigl(a^{q-1}+t^{q-1}\bigr)\Bigr]
  $$
  Consequently, by (\ref{eq:first-est-elem-lemma}) we have, for $b \ge a$,  
   \begin{align*}
   \ell(b)&=  \int_0^a \ell'(t)\,dt+ \int_a^b \ell'(t)\,dt \ge q \Bigl[ \int_0^a \bigl( (q-1)ta^{q-2} - t^{q-1}\bigr)dt 
+ \int_a^b \bigl( (q-1)a t^{q-2} - a^{q-1}\bigr)dt\Bigr]\\
       &= qa  \bigl( \kappa_{q,1} a^{q-1} + b^{q-1} - a^{q-2}b \bigr)\qquad \text{with $\quad \kappa_{q,1} :=\frac{q-1}{2}-\frac{1}{q} >0$.}
    \end{align*}
    Since, by Young's inequality,
    $$
    a^{q-2}b \le \frac{q-2}{q-1}a^{q-1} + \frac{1}{q-1}b^{q-1},
    $$
 we deduce that 
$$
\ell(b) \ge   qa \bigl[ \bigl(\kappa_{q,1}-\frac{q-2}{q-1}\bigr) a^{q-1} + \frac{q-2}{q-1} b^{q-1}\bigr].
$$
If $\kappa_{q,1} \ge \frac{q-2}{q-1}$, we conclude that $\ell(b) \ge    \frac{q(q-2)}{q-1} a b^{q-1}$. On the other hand, if $0< \kappa_{q,1} < \frac{q-2}{q-1}$, we use again that $b \ge a$ and conclude that
$$
\ell(b) \ge   qa \bigl[ \bigl(\kappa_{q,1}-\frac{q-2}{q-1}\bigr) b^{q-1} + \frac{q-2}{q-1} b^{q-1}\bigr]= q \kappa_{q,1} a b^{q-1}.
$$
Hence, (\ref{elem-lemma-claim}) holds for $b \ge a >0$ with $\kappa_q = q \min \{ \frac{q-2}{q-1}, \kappa_{q,1}\} \in (0,q)$. The proof is finished.
  \end{proof}

Next we provide basic but important estimates for the nonlinearity $g$ defined in (\ref{def-g}) and its primitive $G$.
 
\medbreak 
\begin{lem} \label{G-g-estimate} $ $
  \begin{itemize}
  \item[(i)] 
  For $(x,s) \in \RNp \times \R$ we have
  \begin{equation}
    \label{eq:g-1-estimate}
 0 \le g(x,s)- s^+ p u_c(x)^{p-1} \le C_{1,p}\,  [s^+]^{\,p} + 1_{\{p>2\}}C_{2,p}\,  [s^+]^2,
  \end{equation}
  and
  \begin{equation}
    \label{eq:G-1-estimate}
   0 \le G(x,s)- \frac{p}{2}[s^+]^2 u_c(x)^{p-1} \le \frac{C_{1,p}}{p+1}[s^+]^{p+1} + 1_{\{p>2\}}\frac{C_{2,p}}{3} [s^+]^3,
  \end{equation}
  with $C_{1,p}:= 1+2^{p-3} p(p-1)$ and $C_{2,p}:=p(p-1)2^{p-3}\woo^{p-2}$.\\
\item[(ii)] Let
  $$
  H(x,s) := \frac{1}{2}g(x,s)s -G(x,s)\qquad \text{for $x \in \RNp$, $s \in \R$.}
  $$
  Then we have
  \begin{equation}
    \label{eq:H-lower-est}
H(x,s) \ge \max \Bigl\{ \:0\:,\: \frac{p-1}{2(p+1)}\, [s^+]^{p+1} - u_c(x)^{p-1} D_{1,p}[s^+]^2 - u_c(x)\, 1_{\{p>2\}} D_{2,p} [s^{+}]^p \Bigr\}  
  \end{equation}
  with $D_{1,p}:= \frac{p(p-1)}{p+1}\bigl(1+2^{p-2}\bigr)$ and $D_{2,p}:= \frac{p2^{p-2}}{p+1}$.\\[0.2cm]
Moreover, the function $H(x,\cdot)$ is non-decreasing in $s$ for every $x \in \RNp$.  
\end{itemize}
\end{lem}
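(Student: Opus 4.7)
The plan is to reduce everything to the regime $s > 0$ (all sides vanish for $s \le 0$) and then argue in terms of $a := u_c(x) \in [0,c_p]$ and $b := s > 0$, so that $g(x,s) = (a+b)^p - a^p$ and $G(x,s) = \frac{1}{p+1}[(a+b)^{p+1}-a^{p+1}] - a^p b$.

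For part (i), the lower bound in (\ref{eq:g-1-estimate}) is just the tangent-line inequality for the convex map $\tau \mapsto \tau^p$. For the upper bound I would use the identity
\[
g(x,s) - p a^{p-1} b = p \int_0^b \bigl[(a+t)^{p-1} - a^{p-1}\bigr]\, dt
\]
and split into two regimes. If $1 < p \le 2$, the subadditivity of $\tau \mapsto \tau^{p-1}$ gives $(a+t)^{p-1} - a^{p-1} \le t^{p-1}$, which integrates to $b^p$. If $p > 2$, Taylor's formula yields $g(x,s) - pa^{p-1}b = \tfrac{p(p-1)}{2}(a+\theta b)^{p-2} b^2$ for some $\theta \in (0,1)$, and the estimate $(a+\theta b)^{p-2} \le (a+b)^{p-2} \le 2^{p-2}\max\{a,b\}^{p-2}$ further splits into the subcases $a \le b$ (which yields the $b^p$ contribution with coefficient $p(p-1)2^{p-3}$) and $b < a \le c_p$ (which yields the $b^2$ contribution with the $c_p^{p-2}$ factor of $C_{2,p}$). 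The combined estimate matches (\ref{eq:g-1-estimate}), and (\ref{eq:G-1-estimate}) follows by integrating in $s$ from $0$ to $s$.

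For part (ii), the cornerstone is the integral representation
\[
H(x,s) = \frac{p(p-1)}{2} \int_0^s t(s-t)\bigl(u_c(x)+t\bigr)^{p-2}\, dt, \qquad s \ge 0.
\]
I would establish this by computing $\partial_s H = \tfrac{1}{2}\bigl[ps(a+s)^{p-1} - ((a+s)^p - a^p)\bigr]$ and checking that this coincides with $\tfrac{p(p-1)}{2}\int_0^s t(a+t)^{p-2}\,dt$ (both vanish at $s=0$ and share the derivative $\tfrac{p(p-1)}{2}s(a+s)^{p-2}$), after which a Fubini swap gives the displayed formula. This representation immediately yields $H \ge 0$ and the monotonicity of $H(x,\cdot)$, since $\partial_s H \ge 0$. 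When $p \ge 2$, the bound $(a+t)^{p-2} \ge t^{p-2}$ gives $H \ge \frac{p-1}{2(p+1)} b^{p+1}$, which already dominates the right-hand side of (\ref{eq:H-lower-est}) because the correction terms are non-positive.

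The main obstacle is the regime $1 < p < 2$, where the monotonicity of $\tau \mapsto \tau^{p-2}$ reverses. There I would analyze
\[
\frac{p-1}{2(p+1)} b^{p+1} - H = \frac{p(p-1)}{2} \int_0^b t(b-t)\bigl[t^{p-2} - (a+t)^{p-2}\bigr]\, dt
\]
by distinguishing $b \le a$ and $b > a$. If $b \le a$, the estimate $b^{p+1} \le a^{p-1}b^2$ renders the second term inside the $\max$ in (\ref{eq:H-lower-est}) non-positive, so $H \ge 0$ suffices. If $b > a$, I would split the integral at $t=a$: on $(0,a)$, using $t^{p-2} - (a+t)^{p-2} \le t^{p-2}$ together with $b-t \le b$ and $a \le b$ bounds that contribution by $\tfrac{a^{p-1}b^2}{p}$; on $(a,b)$, combining the mean-value estimate $t^{p-2} - (a+t)^{p-2} \le (2-p)a\,t^{p-3}$ with $t^{p-2} \le a^{p-2}$ for $t \ge a$ bounds that contribution by $\tfrac{(2-p) a^{p-1}b^2}{2}$. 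Summing and multiplying by $\tfrac{p(p-1)}{2}$ yields an overall bound of $\tfrac{(p-1)(2+2p-p^2)}{4}a^{p-1}b^2$, and a short elementary check confirms that this is at most $D_{1,p}a^{p-1}b^2$ for $1 < p < 2$ (the inequality reduces to $p\cdot 2^p \ge -p^3+p^2+2$, which holds with equality at $p=1$ and then on $(1,2)$ by comparing derivatives at $p=1$).
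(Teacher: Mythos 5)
Your part (i) is essentially the paper's argument: both start from the integral $g(x,s)-spa^{p-1} = p\int_0^s[(a+t)^{p-1}-a^{p-1}]\,dt$ (or its Taylor remainder), split the cases $p\le 2$ and $p>2$, and use subadditivity, respectively the bound $(a+b)^{p-2}\le 2^{p-2}\max\{a,b\}^{p-2}$ together with $a\le c_p$, to land on the constants $C_{1,p}, C_{2,p}$. The only difference is cosmetic — Lagrange form of Taylor vs. the integral form — so that part needs no comment.

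Your part (ii) takes a genuinely different route. The paper splits
\[
H = \Bigl(\tfrac12-\tfrac{1}{p+1}\Bigr)\bigl[(a+s)^p-a^p\bigr]s - \tfrac{1}{p+1}\bigl[(a+s)^p-a^p-pa^{p-1}s\bigr]a,
\]
lower-bounds the first bracket by $s^p$ via superadditivity, and then controls the second bracket by the same Taylor-remainder estimates as in (i), which is where the $D_{1,p}$ and $D_{2,p}$ terms come from. You instead establish the kernel representation $H=\tfrac{p(p-1)}{2}\int_0^s t(s-t)(a+t)^{p-2}\,dt$, which simultaneously yields $H\ge 0$ and monotonicity (the integrand, resp.\ $\partial_s H$, is nonnegative), and which for $p\ge2$ gives the \emph{sharper} bound $H\ge\tfrac{p-1}{2(p+1)}s^{p+1}$ with no correction term at all — in particular the $D_{2,p}$ term in (\ref{eq:H-lower-est}) is superfluous in your treatment. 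For $1<p<2$ your split at $t=a$ is also cleaner than the paper's and gives an explicit constant $\tfrac{(p-1)(2+2p-p^2)}{4}$; the comparison with $D_{1,p}$ reduces to the elementary inequality $p\cdot 2^p\ge -p^3+p^2+2$ on $(1,2)$, which is correct. The only small imprecision is the phrase \textquotedblleft by comparing derivatives at $p=1$\textquotedblright: comparing derivatives at a single point only controls a right-neighbourhood of $p=1$. You should instead note that $f(p):=p\cdot2^p+p^3-p^2-2$ has $f(1)=0$ and $f'(p)=2^p(1+p\ln 2)+3p^2-2p>0$ for all $p\in(1,2)$ (each term is positive there), so $f>0$ on the whole interval. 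With that one-line fix, the argument is complete and, for part (ii), both more economical and slightly stronger than the paper's.
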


\begin{remark}
The constants $C_{i,p}$ and $D_{i,p}$, $i = 1,2$, are not optimal. However, this choice simplifies the presentation. Moreover, they do not play an important role in our proofs below.
\end{remark}

\begin{proof}[Proof of Lemma \ref{G-g-estimate}]
(i) Since $g(\cdot,s) \equiv 0$ and $G(\cdot,s) \equiv 0$ for $s \le 0$, it suffices to consider $s > 0$. Fix $x \in \RNp$. Since $g(x,\cdot)$ is of class $C^1$ on $(0,\infty)$, we have
  \begin{equation}
    \label{eq:g-1-estimate-step-1}
g(x,s)=(u_c(x)+s)^{p}-u_c(x)^p= s p u_c(x)^{p-1} + p\int_0^s \bigl[(u_c(x)+\tau)^{p-1}-u_c(x)^{p-1}\bigr]\,d\tau, \qquad \text{for $s > 0$.}
\end{equation}
We now distinguish two cases. If $p \in (1,2]$, we have
$$
0 \le (a +\tau)^{p-1} - a^{p-1} \le \tau^{p-1} \qquad \text{for $\tau>0,\: a \ge 0$},
$$
and therefore, if $p \in (1,2]$,
\begin{equation}
  \label{eq:g-est-p-less-2}
0 \le   g(x,s) -  s p u_c(x)^{p-1} \le p \int_0^s \tau^{p-1}\,d\tau \le s^p, \qquad \text{for $s \ge 0$.}
\end{equation}
If $p>2$, we have 
$$
0 \le (a +\tau)^{p-1} - a^{p-1} \le (p-1)\tau (a+\tau)^{p-2},  \qquad \text{for $\tau>0,\: a \ge 0$},
$$
by the convexity of the function $a \mapsto (a +\tau)^{p-1}$ and therefore, using also (\ref{u-0-uniform-est}), 
  \begin{align*}
    0 &\le  g(x,s) -  s p u_c(x)^{p-1} \le p\int_0^s \bigl[(u_c(x)+\tau)^{p-1}-u_c(x)^{p-1}\bigr]\,d\tau \\
      &\le p (p-1)\int_0^s \tau (u_c(x)+\tau)^{p-2}\,d\tau \le \frac{p(p-1)}{2}s^2 (u_c(x)+s)^{p-2} \le \frac{p(p-1)}{2}s^2 (\woo +s)^{p-2}.
  \end{align*}
Note also that, since $p > 2$,
  \begin{equation}
    \label{eq:p-2-est}
 (\woo +s)^{p-2} \le \bigl(2 \max\{\woo,s\}\bigr)^{p-2} \le (2\woo)^{p-2} + (2s)^{p-2}, \qquad \text{for $s \ge 0$.}
  \end{equation}
Consequently, if $p > 2$,
\begin{equation}
\label{eq:g-est-p-greater-2}
    0 \le  g(x,s) -  s p u_c(x)^{p-1}  \le 2^{p-3} p(p-1)s^p + p(p-1)2^{p-3}\woo^{p-2} s^2, \qquad \text{for $s \ge 0$.}
\end{equation}
Now (\ref{eq:g-1-estimate}) follows by combining (\ref{eq:g-est-p-less-2}) and (\ref{eq:g-est-p-greater-2}). Moreover, (\ref{eq:G-1-estimate}) follows by integrating (\ref{eq:g-1-estimate}).
 
\medbreak
(ii) We first note that $H(x,s) \equiv 0$ for all $s \leq 0$. Thus, we just have to prove the result for $s > 0$. Directly observe that, for all $x \in \RNp$, we have $H(x,\cdot) \in C^1(\R)$ and
$$
\begin{aligned}
\frac{\partial}{\partial s} H(x,s) & = \frac{p}{2}(u_c(x)+s)^{p-1}s + \frac{1}{2} (u_c(x)+s)^p - \frac{1}{2} u_c^p(x)- (u_c(x)+s)^p + u_c^p(x) \\
& = \frac{1}{2} \left[ p (u_c(x)+s)^{p-1} s - (u_c(x)+s)^p+ u_c^p(x) \right] \qquad \text{for $(x,s) \in \RNp \times (0,+\infty).$}
\end{aligned}
$$
On the other hand, since $p>1$, we have, by the mean value theorem, 
$$
(u_c(x)+s)^p - u_c^p(x) \leq p (u_c(x)+s)^{p-1} s  \qquad \text{for $(x,s) \in \RNp \times (0,+\infty).$}
$$ 
Hence $ \frac{\partial}{\partial s} H(x,s) \geq 0$ for all $s > 0$, so the function $H(x,\cdot)$ is non-decrasing in $s \in [0,\infty)$. This also implies that $H(x,s) \ge 0$ for $s \ge 0$. It thus remains to prove (\ref{eq:H-lower-est}) for $s \ge 0$. For this we first note that
\begin{align*}
H(x,s) =\frac{g(x,s)s}{2}-G(x,s) &=  \frac{1}{2}\Bigl((u_c(x)+s)^p - u_c(x)^p\Bigr)s - \frac{1}{p+1}\Bigl((u_c(x)+s)^{p+1} - u_c(x)^{p+1} - (p+1) u_c(x)^{p}s \Bigr)\\
&=  \Bigl(\frac{1}{2}-\frac{1}{p+1}\Bigr)\Bigl((u_c(x)+s)^p - u_c(x)^p\Bigr)s - \frac{1}{p+1}\Bigl((u_c(x)+s)^{p} - u_c(x)^{p} - p u_c(x)^{p-1}s \Bigr)u_c(x)\\  
&\ge   \frac{p-1}{2(p+1)}\,s^{p+1} - \frac{1}{p+1}u_c(x) \bigl(g(x,s)- s p u_c(x)^{p-1}\bigr) . 
\end{align*}
It therefore remains to show that 
\begin{equation}
  \label{g-special-est}
u_c(x) \bigl(g(x,s)- s p u_c^{p-1}(x)\bigr) \le (p+1) \bigl( u_c^{p-1}(x)\, D_{1,p} s^2 + u_c(x)\, 1_{\{p > 2\}} D_{2,p} s^p \bigr), \quad \textup{ for } s > 0. 
\end{equation}
By (\ref{eq:g-1-estimate-step-1}) and integration by parts we have
\begin{align}
u_c(x) \bigl(g(x,s)- s p u_c^{p-1}(x)\bigr)&= p u_c(x)\int_0^s \bigl[(u_c(x)+\tau)^{p-1}-u_c(x)^{p-1}\bigr]\,d\tau \nonumber\\ 
&= p(p-1) u_c(x) \int_0^s (s-\tau) \bigl[(u_c(x)+\tau)^{p-2}\bigr]\,d\tau. \label{intermediate-special-est}  
\end{align}
If $p \in (1,2]$, we have $(u_c(x)+\tau)^{p-2} \le u_c(x)^{p-2}$ for $\tau>0$ and therefore
\begin{equation}
  \label{g-special-est-1}
u_c(x) \bigl(g(x,s)- s p u_c(x)^{p-1}\bigr) \le  p(p-1) u_c^{p-1}(x) \int_0^s (s-\tau) \,d\tau \le p(p-1) u_c^{p-1}(x)s^2.
\end{equation}
If $p >2$, arguing as (\ref{eq:p-2-est}), we have
$$
(u_c(x)+\tau)^{p-2}  \le  2^{p-2} \bigr( u_c^{p-2}(x) + \tau^{p-2} \bigr),\qquad \text{for $\tau \ge 0$},
$$
and therefore (\ref{intermediate-special-est}) yields
\begin{equation} \label{special-est-2}
\begin{aligned}
u_c(x) \bigl(g(x,s)- s p u_c(x)^{p-1}\bigr) &\le  2^{p-2} p(p-1) u_c(x) \int_0^s (s-\tau)\bigl[u_c^{p-2}(x) + \tau^{p-2}\bigr] \,d\tau  \\
&= 2^{p-2} p(p-1) u_c(x) \left( u_c^{p-2}(x) \int_0^s (s-\tau) d\tau + \int_0^s (s-\tau) \tau^{p-2} d\tau\right)\\
&\le  
2^{p-2} p (p-1) u_c^{p-1}(x)\, s^2 + 2^{p-2} p u_c(x)\, s^p, \quad \textup{ for } s > 0.
\end{aligned}
\end{equation}
Now (\ref{g-special-est}) follows by combining (\ref{g-special-est-1}) and (\ref{special-est-2}). The proof is finished. 
\end{proof}

\begin{remark}
\label{functional-diff}  $ $
\begin{itemize}
\item[(a)] From the growth estimates given in Lemma~\ref{G-g-estimate} (i) and the fact that $g$ is continuous, it follows in a standard way that the functional $E$ is well-defined on $H_0^1(\RNp)$ and of class $C^1$.
\item[(b)]
Part (ii) of Lemma~\ref{G-g-estimate} will be useful in the analysis of Cerami sequences of the functional $E$, see Section~\ref{sec:mount-pass-geom} below.
\end{itemize}
\end{remark}

Next, we consider the quadratic form $q_c: H_0^1(\RNp)  \to \R$ given by 
\begin{equation} \label{qepsilon}
q_{c}(u) := \int_{\RNp} \big(|\gradu|^2 + V_c(x) u^2\big)\,dx,
\end{equation}
with
\begin{equation} \label{Vepsilon}
\qquad V_c(x) := 1 - p u_c^{p-1}(x_N)\, \in\, L^{\infty}(\RNp).
\end{equation}
As we show in the following lemma, $q_{c}$ is positive definite on $H^1_0(\R^N_+)$.

\begin{prop} \label{positive-definite-main-proposition}
We have 
  \begin{equation}
\label{main-proposition-equation}
\widetilde q_c := \inf_{u \in H^1_0(\R^N_+) \setminus \{0\}} \frac{q_{c}(u)}{\|u\|^2}>0.
\end{equation}
\end{prop}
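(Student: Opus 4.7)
\medbreak
\noindent\textbf{Proof plan.} The strategy is to reduce the $N$-dimensional inequality to a one-dimensional spectral problem on $(0,\infty)$, and then to exploit the fact that $-z_c'$ is a positive function in the kernel of the associated Schr\"odinger operator on the half-line. Since the potential $V_c$ depends only on $x_N$, Fubini's theorem gives, for every $u \in H_0^1(\RNp)$,
$$
q_c(u) \;=\; \int_{\R^{N-1}} q_c^{1D}\bigl(u(x',\cdot)\bigr)\, dx' \;+\; \int_{\RNp} |\nabla_{x'} u|^2\, dx,
$$
with $q_c^{1D}(\psi) := \int_0^\infty (|\psi'|^2 + V_c \psi^2)\,dx_N$ for $\psi \in H_0^1(0,\infty)$. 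Hence it suffices to produce a one-dimensional estimate $q_c^{1D}(\psi) \ge \mu_c \int_0^\infty (|\psi'|^2 + \psi^2)\, dx_N$ for all $\psi \in H_0^1(0,\infty)$, with some $\mu_c > 0$, for then $\widetilde q_c \ge \min(\mu_c,1) > 0$.

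The core step will be to show that the bottom $\lambda_1$ of the $L^2(0,\infty)$-spectrum of the Dirichlet operator $\Lb := -\tfrac{d^2}{dx_N^2} + V_c$ is strictly positive. Differentiating $-z_c'' + z_c = z_c^p$ gives $\Lb(-z_c') = 0$, and since $z_c = w_0(\,\cdot\, + t_{c,p})$ with $t_{c,p} > 0$ and $w_0$ strictly decreasing on $[0,\infty)$, the function $\phi := -z_c'$ is strictly positive on $[0,\infty)$ with $\phi(0) > 0$. Writing a test function $\psi \in C_c^\infty(0,\infty)$ as $\psi = \phi\eta$ and integrating by parts (using $\Lb\phi = 0$) produces Barta's identity
$$
q_c^{1D}(\psi) \;=\; \int_0^\infty \phi^2 (\eta')^2\, dx_N \;\ge\; 0,
$$
so that $\lambda_1 \ge 0$ by density. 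Now $V_c(x_N) \to 1$ exponentially as $x_N \to \infty$, so the essential spectrum of $\Lb$ is $[1,\infty)$; thus $\lambda_1 = 0$ would be an eigenvalue realised by some $\psi_1 \in H_0^1(0,\infty)\setminus\{0\}$ with $\Lb\psi_1 = 0$ and $\psi_1(0)=0$. However, the space of solutions to $\Lb\psi = 0$ which decay at infinity is one-dimensional (by the asymptotic behaviour of $-\psi''+\psi=0$) and is spanned by $\phi$, so $\psi_1$ would have to be a nonzero multiple of $\phi$, contradicting $\psi_1(0)=0$ since $\phi(0)>0$. Therefore $\lambda_1 > 0$.

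To upgrade from the $L^2$-estimate $q_c^{1D}(\psi) \ge \lambda_1 \int \psi^2\, dx_N$ to the $H_0^1$-estimate, I would invoke a bounded perturbation argument: for $\mu \in (0,1)$,
$$
q_c^{1D}(\psi) - \mu \int_0^\infty \bigl(|\psi'|^2 + \psi^2\bigr)\, dx_N \;=\; (1-\mu) \int_0^\infty \bigl(|\psi'|^2 + \tilde V_\mu \psi^2\bigr)\, dx_N, \qquad \tilde V_\mu := \frac{V_c - \mu}{1 - \mu},
$$
and $\tilde V_\mu - V_c = -\mu\, p\, u_c^{p-1}/(1-\mu)$ is of order $O(\mu)$ in $L^\infty(0,\infty)$ by (\ref{u-0-uniform-est}). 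By the min-max principle the first Dirichlet eigenvalue of $-\tfrac{d^2}{dx_N^2} + \tilde V_\mu$ stays within $O(\mu)$ of $\lambda_1 > 0$, hence remains nonnegative for $\mu > 0$ small enough, which yields the desired $\mu_c$.

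The main obstacle is the step ruling out $\lambda_1 = 0$: it relies crucially on $\phi(0) > 0$, equivalently $t_{c,p} > 0$, equivalently $c < \woo$. It is precisely this incompatibility of the zero mode $\phi$ with the Dirichlet boundary condition that forces $\lambda_1$ to be strictly positive, and this is the structural reason why the positivity of $q_c$ (and hence the variational setup) breaks down as $c \nearrow \woo$.
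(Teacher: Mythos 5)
Your proposal is correct, but it takes a genuinely different route from the paper's proof on the crucial one-dimensional step, even though both arguments pivot on the same distinguished function $\phi = -z_c' = -u_c'$. The paper argues directly on the Rayleigh quotient: assuming $\lambda := \inf q_c^{1D}/\|\cdot\|_{L^2}^2 \le 0$, it extracts a minimizing sequence, uses $V_c(t)\to 1$ to rule out vanishing and obtain an actual minimizer $u_* \in H_0^1(\R_+)$, takes $u_*$ positive so that $u_*'(0)>0$, and then the Wronskian/Green identity $\int(w_* u_*'' - u_* w_*'')\,dt = -w_*(0)u_*'(0) < 0$ with $w_* := -u_c'$ contradicts $\lambda\le 0$. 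You instead bypass the compactness/minimizer step entirely: the ground-state substitution $\psi = \phi\eta$ (Barta/Jacobi transform) gives $q_c^{1D}(\psi) = \int \phi^2(\eta')^2 \ge 0$ outright, so $\lambda_1\ge 0$; you then invoke $\sigma_{\mathrm{ess}} = [1,\infty)$ via Weyl's theorem and the one-dimensionality of the decaying solution space of $-\psi''+V_c\psi=0$, which is spanned by $\phi$, to see that $\lambda_1=0$ would force a Dirichlet eigenfunction proportional to $\phi$, contradicting $\phi(0)>0$. The two arguments both exploit the same structural fact ($t_{c,p}>0 \Leftrightarrow c<\woo$, making $\phi>0$ on $[0,\infty)$ and $\phi(0)>0$), and in a sense the Wronskian identity and the ODE-uniqueness argument are two faces of the same coin. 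What yours buys is a cleaner separation of "nonnegative" from "strictly positive" and a more transparent spectral picture, at the cost of invoking Weyl's essential-spectrum theorem and ODE asymptotics; the paper's version is more self-contained and needs only a concentration-compactness lemma plus a classical Green identity. Your final upgrade from the $L^2$ lower bound to the full $H^1_0$ bound via the potential $\tilde V_\mu$ and min-max also works, though it is more roundabout than the paper's one-line convex combination $q_c(u)\ge \delta q_c(u) + (1-\delta)C\|u\|_{L^2}^2$; if you keep the Barta approach, I'd recommend finishing with that simpler interpolation instead.
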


\bigbreak
\begin{remark} $ $
\begin{itemize}
\item[(a)] Recall that we are using the shortened notation $\|u\|^2  = \int_{\RNp} \big(|\nabla u|^2 + |u|^2\big)\, dx$.
\item[(b)] From Proposition \ref{positive-definite-main-proposition} it follows that $(q_c(\cdot))^{1/2}$ is an equivalent norm to $\|\cdot\|$ in $H_0^1(\RNp)$.
\end{itemize}
\end{remark}

\begin{proof}[Proof of Proposition \ref{positive-definite-main-proposition}]
Since $V_{c} \in L^\infty(\R^N_+)$, it suffices to show there exists $C>0$ such that
\begin{equation}
\label{main-proposition-equation-1}
q_{c}(u) \ge C \|u\|_{L^2(\R^N_+)}^2 \qquad \text{for all $u \in H^1_0(\R^N_+).$}
\end{equation}   
Indeed, if (\ref{main-proposition-equation-1}) holds, then for $\delta \in (0,1)$ we have  
\begin{align*}
q_{c}(u) & \ge \delta q_{c}(u)+ (1-\delta)C \|u\|^2_{L^2(\R^N_+)} \ge 
\delta \|u\|^2 + \bigl[(1-\delta)C - \delta\bigl(1 +\|V_{c}\|_{L^\infty(\R^N_+)}\bigr)\bigr]\,\|u\|^2_{L^2(\R^N_+)}.
\end{align*}
Choosing $\delta$ sufficiently small, we have $(1-\delta)C - \delta\bigl(1 +\|V_{c}\|_{L^\infty(\R^N_+)}\bigr)\ge 0$ and therefore (\ref{main-proposition-equation}) holds with $\widetilde q_c \ge \delta$.\\
To show (\ref{main-proposition-equation-1}), we first consider the case $N= 1$. Arguing by contradiction, we assume that 
$$
\lambda:= \inf \big\{ q_{c}(u): u \in H_0^1(\R_{+}) \textup{ and } \|u\|_{L^2(\R_{+})} = 1 \big\} \leq 0,
$$
(note that $\lambda>-\infty$ since $V_{c} \in L^\infty(\R_+)$). 
Then, there exists a sequence $(u_n)_n$ such that $\|u_n\|_{L^2(\R_+)}= 1$ for all $n \in \N$ and $q_{c}(u_n) \to \lambda$ as $n \to \infty$. Hence, $(u_n)_n$ is a bounded sequence in $H_0^1(\R_+)$, and thus $u_n \rightharpoonup u_*$ weakly in $H_0^1(\R_+)$ after passing to a subsequence. Moreover, with $v_n:= u_n- u_*$, we have
$v_n \weakto 0$ in $H_0^1(\R_+)$ and therefore $v_n \to 0$ in $L^2_{loc}(\R_{+})$. 
Since $V_{c}(t) \to 1$ as $t \to \infty$, this implies that 
$$
q_{c}(v_n) \ge \int_{\R_+}V_{c}(t) v_n^2\,dt \ge \|v_n\|_{L^2(\R_{+})}^2 + o(1), \qquad \text{as $n \to \infty$},
$$
and therefore 
\begin{align*}
\lambda + o(1)&= q_{c}(u_n)= q_{c}(u_*) + q_{c}(v_n)+ o(1) \\
& \ge \lambda \|u_*\|_{L^2(\R_{+})}^2 + \|v_n\|_{L^2(\R_{+})}^2+o(1) = \lambda\big(\|u_*\|_{L^2(\R_{+})}^2+ \|v_n\|_{L^2(\R_{+})}^2\big) + (1-\lambda)\|v_n\|_{L^2(\R_{+})}^2 + o(1)\\
&= \lambda + (1-\lambda)\|v_n\|_{L^2(\R_{+})}^2 +o(1)
\end{align*}
It thus follows that $v_n \to 0$ in $L^2(\R_{+})$ and hence $u_n \to u_*$ in $L^2(\R_+)$, which yields that $\|u_*\|_{L^2}=1$. Moreover, by weak lower semicontinuity of $q_{c}$ and the definition of $\lambda$, it follows that $q_{c}(u_*)=\lambda$, so $u_*$ is a constrained minimizer for $q_{c}$. A standard argument (based on replacing $u_*$ by $|u_*|$) shows that $u_* \in H_{0}^1(\R_+)$ is a positive or negative solution of 
$$
-u_*'' + V_{c}(t)u_* = \lambda u_* \quad \text{in $\R_+$}, \qquad u_*(0)=0. 
$$
Without loss of generality, we may assume that $u_*$ is positive, which implies that $u_*'(0)>0$. We also recall that $w_*:= -u_{c}'$ satisfies 
$$
-w_*'' + V_{c}(t)w_* = 0 \quad \text{in $\R_+$}, \qquad w_*>0 \quad \text{in $\overline{\R_+}$}. 
$$
Consequently, we have 
$$
0 \leq - \lambda \int_{\R^+}w_* u_* \,dx = \int_{\R^+}  \bigl(w_* u_*''  - u_* w_*''\bigr)\,dx =- w_*(0) u_*'(0)<0,
$$
a contradiction. Hence, we conclude that \eqref{main-proposition-equation-1} holds in the case $N=1$. To show (\ref{main-proposition-equation-1}) for general $N\geq2$, we remark that, by density, we only have to show it for 
$u \in C^\infty_c(\R^N_+)$. For any such function we then have, writing 
$x = (x',t) \in \R^N_+$ with $x' \in \R^{N-1}$, $t>0$:
\begin{align*}
q_{c}(u) &\ge \int_{\R^N_+}\Bigl(|\partial_t u|^2 + V_{c} u^2\Bigr)\,dx \\
&= \int_{\R^{N-1}}  \int_{\R_+}\Bigl(|\partial_t u(x',t)|^2 + V_c u^2(x',t)\Bigr)\,dt dx' \\
& \ge C  \int_{\R^{N-1}} \int_{\R_+}u^2(x',t)\,dt dx' = C \|u\|^2_{L^2(\R^N_+)}.
\end{align*}
Here we have used the result in the case $N=1$ and the fact that $u(x', \cdot) \in C^\infty_c(\R_+) \subset H^1_0(\R_+)$ for every $x' \in \R^{N-1}$. 
We thus have proved (\ref{main-proposition-equation-1}) for general $N \ge 1$, and the proof is complete.
\end{proof}

Having at hand Proposition \ref{positive-definite-main-proposition}, we prove a lower estimate on the functional $E$ given in \eqref{E} that will be useful at several points below. 

\begin{cor} \label{lemma-taylor}
We have
$$
E(u) \geq \frac{\widetilde q_c}{2} \|u\|^2 - \frac{C_{1,p}}{p+1} \|u\|_{L^{p+1}(\RNp)}^{p+1} - 1_{\{p>2\}}  \frac{C_{2,p}}{3} \|u\|_{L^{3}(\RNp)}^{3}\qquad \text{for all $u \in H_0^1(\RNp).$}
$$
with $\widetilde q_c$ given in \eqref{main-proposition-equation} and $C_{1,p},C_{2,p}$ given in Lemma~\ref{G-g-estimate} (i).
\end{cor}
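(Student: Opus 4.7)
The plan is to split $E(u)$ into its quadratic part and its potential part, use the upper bound on $G(x,u)$ from Lemma~\ref{G-g-estimate}(i) to control $\int_{\RNp} G(x,u)\,dx$ from above by a quadratic term involving $u_c^{p-1}$ plus higher order $L^{p+1}$ and $L^3$ terms, and then recognize that the remaining quadratic expression is precisely $\tfrac{1}{2}q_c(u)$, to which Proposition~\ref{positive-definite-main-proposition} applies.

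More concretely, I would start from \eqref{eq:G-1-estimate} which gives
\begin{equation*}
G(x,u) \;\le\; \tfrac{p}{2}\, u_c(x)^{p-1}[u^{+}]^2 \;+\; \tfrac{C_{1,p}}{p+1}\,[u^{+}]^{p+1} \;+\; 1_{\{p>2\}}\tfrac{C_{2,p}}{3}\,[u^{+}]^3
\end{equation*}
for every $x\in\RNp$. Integrating this pointwise bound over $\RNp$ and subtracting the result from $\tfrac12\|u\|^2$ in the definition \eqref{E} of $E$ yields
\begin{equation*}
E(u) \;\ge\; \tfrac{1}{2}\|u\|^2 \;-\; \tfrac{p}{2}\int_{\RNp} u_c(x)^{p-1}[u^{+}]^2\,dx \;-\; \tfrac{C_{1,p}}{p+1}\|u^{+}\|_{L^{p+1}(\RNp)}^{p+1} \;-\; 1_{\{p>2\}}\tfrac{C_{2,p}}{3}\|u^{+}\|_{L^3(\RNp)}^3.
\end{equation*}

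Next, I would use the trivial bound $[u^{+}]^2 \le u^2$ (note that $u_c \ge 0$ and $p\ge 0$, so enlarging the subtracted nonnegative integral preserves the inequality) to obtain
\begin{equation*}
E(u) \;\ge\; \tfrac{1}{2}\Bigl(\|u\|^2 - p\int_{\RNp} u_c(x)^{p-1}u^2\,dx\Bigr) \;-\; \tfrac{C_{1,p}}{p+1}\|u\|_{L^{p+1}(\RNp)}^{p+1} \;-\; 1_{\{p>2\}}\tfrac{C_{2,p}}{3}\|u\|_{L^3(\RNp)}^3,
\end{equation*}
where I have also used $[u^{+}]^{p+1}\le |u|^{p+1}$ and $[u^{+}]^3\le |u|^3$. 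By \eqref{qepsilon}--\eqref{Vepsilon}, the bracketed expression equals the quadratic form $q_c(u)$. Applying Proposition~\ref{positive-definite-main-proposition} gives $q_c(u)\ge \widetilde q_c\|u\|^2$, and the claim follows.

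There is no real obstacle here: the statement is essentially a bookkeeping corollary of the two main ingredients of Section~\ref{section-preliminaries}. The only minor point that requires a moment of attention is the legitimacy of replacing $[u^{+}]^2$ by $u^2$ in the quadratic term; since the coefficient in front carries a minus sign and $u_c^{p-1}\ge 0$, this indeed weakens the inequality in the correct direction.
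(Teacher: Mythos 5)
Your proof is correct and follows essentially the same route as the paper: bound $G$ from above via \eqref{eq:G-1-estimate}, replace $(u^+)^2$ by $u^2$ (legitimate since $u_c^{p-1}\ge 0$ and the term carries a minus sign), identify the resulting quadratic expression as $q_c(u)$, and invoke Proposition~\ref{positive-definite-main-proposition}. The only difference is that you make the $(u^+)^2 \le u^2$ and $\|u^+\|_{L^q}\le\|u\|_{L^q}$ steps explicit, which the paper leaves implicit.
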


\begin{proof}
 For all $u \in H_0^1(\RNp)$, we have, by (\ref{eq:G-1-estimate}) and Proposition~\ref{positive-definite-main-proposition},
\begin{align*}
E(u) & = \frac{1}{2} \|u\|^2 - \int_{\RNp} G(x,u) dx  \\
     & \ge \frac{1}{2}\|u\|^2 - \frac{1}{2} \int_{\RNp} p u_c^{p-1} (u^+)^2 dx -\frac{C_{1,p}}{p+1} \|u^+\|_{L^{p+1}(\RNp)}^{p+1} - 1_{\{p>2\}}\frac{C_{2,p}}{3} \|u^+\|_{L^{3}(\RNp)}^{3}\\
     & \ge \frac{1}{2}q_c(u)  -\frac{C_{1,p}}{p+1} \|u\|_{L^{p+1}(\RNp)}^{p+1} - 1_{\{p>2\}}\frac{C_{2,p}}{3} \|u\|_{L^{3}(\RNp)}^{3}\\
  &\ge   \frac{\widetilde q_c}{2}\|u\|^2 - \frac{C_{1,p}}{p+1}\|u\|_{L^{p+1}(\RNp)}^{p+1} - 1_{\{p>2\}}\frac{C_{2,p}}{3} \|u\|_{L^{3}(\RNp)}^{3}.
\end{align*}
\end{proof}

\section{Mountain-pass geometry and boundedness of the Cerami sequences}
\label{sec:mount-pass-geom}

\noindent This section is devoted to show that the functional $E$ has a Mountain-pass geometry and that, for any $d \in \R$, the Cerami sequences for $E$ and level $d$ are bounded. We keep using the notation of the introduction and of Section~\ref{section-preliminaries}, which depends on the fixed quantities $c \in (0,\woo)$ and $p \in (1,2^{*}-1)$. We begin by proving that the functional $E$ has indeed a Mountain-pass geometry.

\begin{lem}\label{mpGeometry} The functional $E$ has the following properties.
\begin{itemize}
\item[(i)] $E(0) = 0$.
\item[(ii)] There exist $\rho_0 > 0$ and $\delta_0 > 0$ such that $E(u) \geq \delta_{0}$ for all $u \in H_0^1(\RNp)$ such that $\|u\| = \rho_0$.
\item[(iii)] There exists $\psi \in H_0^1(\RNp)$ such that $\|\psi\| > \rho_0$ and $E(\psi) < 0$.
\end{itemize}
\end{lem}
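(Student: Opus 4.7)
The plan is to verify the three conditions directly, exploiting the growth estimates of Lemma~\ref{G-g-estimate} (i), the coercivity of $q_c$ established in Proposition~\ref{positive-definite-main-proposition}, and the subcriticality assumption $p<2^*-1$.

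For (i), since $g(x,0)=0$ for every $x \in \RNp$ we trivially have $G(x,0)=0$, and hence $E(0)=0$.

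For (ii), I will start from the lower bound in Corollary~\ref{lemma-taylor}, namely
\[
E(u) \geq \frac{\widetilde q_c}{2} \|u\|^2 - \frac{C_{1,p}}{p+1} \|u\|_{L^{p+1}(\RNp)}^{p+1} - 1_{\{p>2\}}\frac{C_{2,p}}{3} \|u\|_{L^{3}(\RNp)}^{3}.
\]
Since $p+1 \in (2,2^*)$, the Sobolev embedding $H_0^1(\RNp)\hookrightarrow L^{p+1}(\RNp)$ yields $\|u\|_{L^{p+1}(\RNp)}\le C\|u\|$. When $p>2$ we also have $3 < p+1 < 2^*$, so interpolation (or a separate Sobolev embedding) gives $\|u\|_{L^3(\RNp)}\le C'\|u\|$. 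Substituting, there exist constants $A,B>0$ and, when $p>2$, $B'>0$, such that
\[
E(u) \ge A\|u\|^2 - B\|u\|^{p+1} - 1_{\{p>2\}} B' \|u\|^3.
\]
Since $p+1>2$ and $3>2$, we may choose $\rho_0>0$ small enough that the right-hand side evaluated at $\|u\|=\rho_0$ is strictly positive, and set $\delta_0$ equal to this positive quantity.

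For (iii), I will argue that $E$ is unbounded below along any ray $t\mapsto t\varphi$ with $\varphi \in C_c^\infty(\RNp)$, $\varphi \ge 0$, $\varphi \not \equiv 0$. The key observation is the pointwise bound $(u_c(x)+t\varphi(x))^{p+1}\ge (t\varphi(x))^{p+1}$ for $t\ge 0$, which combined with the definition of $G$ in \eqref{eq:def-G} yields
\[
\int_{\RNp} G(x,t\varphi)\,dx \;\ge\; \frac{t^{p+1}}{p+1}\|\varphi\|_{L^{p+1}(\RNp)}^{p+1} - \frac{1}{p+1}\int_{\supp \varphi} u_c^{p+1}\,dx - t\int_{\supp\varphi} u_c^{\,p}\varphi\,dx.
\]
Note the two subtracted integrals are finite because $u_c \in L^\infty(\RNp)$ and $\varphi$ has compact support. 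Therefore,
\[
E(t\varphi) \le \frac{t^2}{2}\|\varphi\|^2 - \frac{t^{p+1}}{p+1}\|\varphi\|_{L^{p+1}(\RNp)}^{p+1} + C_1 t + C_2,
\]
for suitable constants $C_1,C_2\ge 0$. Since $p+1>2$, the right-hand side tends to $-\infty$ as $t\to\infty$. Choosing $t>0$ sufficiently large with $t\|\varphi\|>\rho_0$ and $E(t\varphi)<0$, we set $\psi := t\varphi$.

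No real obstacle is expected: the three steps use only the Sobolev embedding in the subcritical regime and the fact that the superquadratic leading term $|s|^{p+1}$ in $G$ dominates over the affine correction. The only mildly delicate point is verifying that the $L^3$-norm can indeed be controlled by the $H^1$-norm when $p>2$, but this is guaranteed by the constraint $p<2^*-1$, which forces $3<p+1<2^*$ (and the case $N=2$ is immediate since all finite exponents are admissible).
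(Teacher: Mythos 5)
Your proof is correct and follows essentially the same route as the paper: (i) is immediate, (ii) combines Corollary~\ref{lemma-taylor} with Sobolev embeddings and takes $\rho_0$ small, and (iii) estimates $E(t\varphi)$ along a ray. The only cosmetic difference is in (iii): the paper uses $(u_c+t\varphi)^{p+1}-u_c^{p+1}\ge (t\varphi)^{p+1}$, i.e.\ $(a+b)^{p+1}\ge a^{p+1}+b^{p+1}$, whereas you drop $u_c^{p+1}$ entirely and pick up the harmless extra constant $C_2$; both give $E(t\varphi)\to-\infty$ since $p+1>2$.
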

\begin{proof}
Since (i) is obvious, we concentrate on proving (ii) and (iii). We first prove (ii). Let $u \in H_0^1(\RNp)$ with $\|u\| = \rho_0$. By Corollary \ref{lemma-taylor}, we have 
$$
E(u) \geq \frac{\widetilde q_c}{2}\rho_0^2 - \frac{C_{1,p}}{p+1} \|u\|_{L^{p+1}(\RNp)}^{p+1} - 1_{\{p>2\}}\frac{C_{2,p}}{3} \|u\|_{L^3(\RNp)}^3.
$$
Applying then Sobolev embeddings, we deduce that
$$
E(u) \geq \frac{\widetilde q_c}{2}\rho_0^2 - C  \big(\rho_0^{p+1} + 1_{\{p>2\}} \rho_0^3 \big),
$$
with a constant $C>0$. Since $p>1$, Claim (ii) follows by taking $\rho_0$ sufficiently small. It then remains to prove (iii). Let $\varphi \in C_c^{\infty}(\RNp)$ with $\varphi \gneqq 0$ and $\psi := t \varphi$ with $t \in (0,+\infty)$. Directly observe that
\begin{equation*}
E(\psi)  = \frac{t^2}{2} \|\varphi\|^2 + t \int_{\RNp} u_c^p \varphi dx - \frac{1}{p+1} \int_{\RNp} \left((u_c+t\varphi)^{p+1} - u_c^{p+1}\right)dx
\end{equation*}
Then, since
$$ \int_{\RNp} \left((u_c+t\varphi)^{p+1}-u_c^{p+1} \right) dx \geq t^{p+1} \|\varphi\|_{L^{p+1}(\RNp)}^{p+1},$$
we have that
$$
E(\psi) \leq \frac{t^2}{2}\|\varphi\|^2+tc^p\|\varphi\|_{L^1(\RNp)} - \frac{t^{p+1}}{p+1} \|\varphi\|_{L^{p+1}(\RNp)}^{p+1}.
$$
Claim (iii) follows taking $t$ sufficiently large and thus the proof is complete.
\end{proof}

We now prove the boundedness of Cerami sequences of the functional $E$. 

\begin{prop} \label{boundednessPSCerami}
Cerami sequences for $E$ at any level $d \in \R$ are bounded.
\end{prop}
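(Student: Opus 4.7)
My plan is to argue by contradiction, assuming $\|u_n\| \to \infty$ along a subsequence. As a preliminary reduction, I would test $E'(u_n)$ against $u_n^- \in H_0^1(\RNp)$ and use $g(x,s) \equiv 0$ for $s \le 0$ to get $\|u_n^-\|^2 = -E'(u_n)u_n^- \le \|E'(u_n)\|\|u_n^-\|$, hence $\|u_n^-\| \to 0$ and $\|u_n^+\| \to \infty$. The central identity I would exploit is
$$
\int_{\RNp} H(x,u_n)\,dx = E(u_n) - \tfrac{1}{2} E'(u_n)u_n = d + o(1),
$$
with $H$ from Lemma~\ref{G-g-estimate}(ii); moreover, thanks to the uniform bound $u_c \le c_p$, the same lemma yields a constant $M = M(p,c)>0$ such that $H(x,s) \ge \frac{p-1}{4(p+1)}s^{p+1}$ for every $s \ge M$ and every $x \in \RNp$. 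This uniform superlinear lower bound will be the central obstruction to pointwise blow-up of $u_n^+$.

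Next I would set $v_n := u_n/\|u_n\|$, so that $\|v_n\| = 1$, and apply the concentration-compactness dichotomy of P.-L.~Lions to $v_n$ extended by $0$ to $\R^N$. If non-vanishing occurs, there are $y_n \in \R^N$ and $\delta > 0$ with $\int_{B_1(y_n)} v_n^2\,dx \ge \delta$; setting $\widetilde v_n(x) := v_n(x+y_n)$, a subsequence converges weakly in $H^1(\R^N)$ to some $\widetilde v \not\equiv 0$, and on $\{\widetilde v > 0\}$ we have $\widetilde u_n(x) := \|u_n\|\widetilde v_n(x) \to +\infty$ pointwise. Fatou's lemma combined with the uniform lower bound on $H$ then gives $\int_{\R^N} H(x+y_n,\widetilde u_n)\,dx \to +\infty$, contradicting the identity above via the change of variables $\int H(x,u_n)\,dx = \int H(x+y_n,\widetilde u_n)\,dx$. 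Notice that this argument is indifferent to whether $(y_n)_N$ stays bounded or tends to $+\infty$, precisely because the lower bound on $H$ is uniform in $x$.

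The delicate case is Lions' vanishing, where $v_n \to 0$ in $L^r(\RNp)$ for every $2 < r < 2^*$ (in particular for $r=p+1$, and for $r=3$ when $p>2$, noting that $p>2$ forces $N<6$ and hence $3 < 2^*$). Here I would follow Jeanjean: pick $t_n \in [0,1]$ realizing $\max_{t \in [0,1]} E(tu_n)$. Combining the upper bound on $G$ from Lemma~\ref{G-g-estimate}(i) with the positive-definiteness inequality $p\int u_c^{p-1}(v_n^+)^2\,dx \le (1-\widetilde q_c)\|v_n^+\|^2 \le 1-\widetilde q_c$ supplied by Proposition~\ref{positive-definite-main-proposition} yields
$$
E(Rv_n) \ge \tfrac{R^2\widetilde q_c}{2} - \tfrac{C_{1,p}R^{p+1}}{p+1}\|v_n\|_{L^{p+1}}^{p+1} - 1_{\{p>2\}}\tfrac{C_{2,p}R^3}{3}\|v_n\|_{L^3}^3 = \tfrac{R^2\widetilde q_c}{2} + o(1)_{n \to \infty}
$$
for every fixed $R > 0$. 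Since $E(Rv_n) = E((R/\|u_n\|)u_n) \le E(t_n u_n)$ whenever $R < \|u_n\|$, letting $n \to \infty$ and then $R \to \infty$ gives $E(t_n u_n) \to +\infty$. Consequently $t_n \in (0,1)$ eventually, so $E'(t_n u_n)u_n = 0$ and therefore $E(t_n u_n) = \int H(x,t_n u_n)\,dx$. The monotonicity of $s \mapsto H(x,s)$ on $[0,\infty)$ from Lemma~\ref{G-g-estimate}(ii), together with $0 \le t_n u_n^+ \le u_n^+$, then gives the pointwise comparison $H(x,t_n u_n) \le H(x,u_n)$; hence $E(t_n u_n) \le d + o(1)$ stays bounded, in contradiction with the divergence just obtained.

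The hard part is precisely the vanishing case: since $u_c$ does not decay in the $x'$-directions, the quadratic functional $v \mapsto \int u_c^{p-1}v^2\,dx$ fails to be weakly continuous on $H_0^1(\RNp)$, which ruins the naive estimate $E(Rv_n) \to R^2/2$. Proposition~\ref{positive-definite-main-proposition} is the sharp tool that absorbs this term into $R^2/2$ via the strict positivity of $\widetilde q_c$ and makes Jeanjean's trick go through.
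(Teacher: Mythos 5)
Your argument is correct and, in the vanishing case, it is essentially identical to the paper's (the same Jeanjean-type trick, using Corollary~\ref{lemma-taylor}/Proposition~\ref{positive-definite-main-proposition} to control the $\int u_c^{p-1}(v_n^+)^2$ term and the monotonicity plus $H$-identity to get the contradiction). Where you genuinely diverge is in the non-vanishing case: the paper splits this into two subcases, according to whether $y^n_N:=\dist(y^n,\partial\RNp)$ diverges or stays bounded, treating the first via the $H$-identity normalized by $\|u_n\|^{p+1}$ (which yields $\|v_n^+\|_{L^{p+1}}\to 0$ and then a Fatou contradiction against $w\not\equiv 0$), and the second via a test-function computation $\langle E'(u_n),\widetilde{w}(\cdot-\xi_n)\rangle/\|u_n\|$ together with Fatou. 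You instead observe that the bound $u_c\le c_p$ makes the lower bound \eqref{eq:H-lower-est} in Lemma~\ref{G-g-estimate}(ii) \emph{uniform in $x$}, so that $H(x,s)\ge \tfrac{p-1}{4(p+1)}s^{p+1}$ for $s\ge M(p,c)$; together with $H\ge 0$ this lets you apply Fatou directly to $\int H(x,u_n)\,dx = \int H(x+y^n,\widetilde u_n)\,dx$ and get $+\infty$ on $\{\widetilde v>0\}$, contradicting the bounded $H$-identity, with no case distinction and no test-function argument. This is a genuine streamlining: your version is indifferent to whether $(y^n_N)$ is bounded, whereas the paper's two subarguments do different things (the bounded case is where the limit function lands in $H^1_0(\RNp)$ rather than $H^1(\RN)$, which the paper handles by pairing with translates of the limit, whereas your uniform-in-$x$ bound on $H$ renders that issue irrelevant). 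The only thing the paper's organization buys over yours is that Step~1.2 of its argument explicitly records the byproduct $\|v_n^+\|_{L^{p+1}}\to 0$, but that fact is not otherwise exploited in the proof, so your route is cleanly preferable here.
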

 
\begin{remark} $ $
\begin{itemize}
\item[(a)] Recall that $(\varphi_n)_n \subset H_0^1(\RNp)$ is a Cerami sequence for $E$ at level $d \in \R$ if
\[ E(\varphi_n) \to d \quad \textup{ and } \quad (1+\|\varphi_n\|)\|E'(\varphi_n)\|_{H^{-1}(\RNp)} \to 0.\]
\item[(b)] The proof of Proposition~\ref{boundednessPSCerami} is inspired by \cite[Section 3]{Je1999}. However, since our problem is not invariant under translations in $\RN$ and our nonlinearity $g$ has a non-standard shape, several difficulties appear. 
\end{itemize}
\end{remark}

\begin{proof}[Proof of Proposition~\ref{boundednessPSCerami}]
Let $d \in \R$ be an arbitrary but fixed constant and let $(u_n)_n \subset H_0^1(\RNp)$ be a Cerami sequence for $E$ at level $d \in \R$. First of all, observe that
\[ \|u_n^{-}\|^2 = - \langle E'(u_n), u_n^{-}\rangle \to 0, \quad \textup{ as } n \to \infty.\]
In particular, we deduce that $(u_n^{-})_n$ is bounded. It then remains to prove that $(u_n^{+})_n$ is bounded. We assume by contradiction that $\|u_n\| \to \infty$ and we set $v_n:= u_n\,/\,\|u_n\|$ for all $n \in \N$. Since $(v_n)_n$ and $(u_n^{-})_n$ are bounded, up to a subsequence if necessary, we have
\begin{equation}
v_n \weakto v \textup{ in } H_0^1(\RNp), \quad v_n \to v \textup{ in } L^q_{loc}(\RNp) \quad \text{for $1 \leq q < 2^{\ast}$} \quad \textup{ and } \quad v_n \to v \;\textup{a.e. in $\RNp$},
\end{equation}
for some $v \in H_0^1(\RNp)$ with $v \geq 0$. We have now two possible cases:
 
\begin{flushright}
\begin{minipage}[r]{0.9 \textwidth}
\textbf{\underline{Case 1} (Vanishing)}: For all $R > 0$, it follows that
\begin{equation} \label{vanishing}
\lim_{n \to \infty} \sup_{y \in \RNp} \int_{B_R(y)\cap \RNp} v_n^2 dx = 0.
\end{equation}
\noindent \textbf{\underline{Case 2} (Non-vanishing)}: There exist $R > 0,\ \delta > 0$ and a sequence of points $(y^n)_n \subset \RNp$ such that
\begin{equation} \label{non-vanishing}
\lim_{n \to \infty} \int_{B_R(y^n) \cap \RNp} v_n^2 dx \geq \delta.
\end{equation}
\end{minipage}
\end{flushright}
We shall prove that none of these cases may happen. This will prove the boundedness of the sequence Cerami sequence $(u_n)_n$. 

\medbreak 
 
\noindent\textbf{\underline{Case 1} (Vanishing)}: First of all, observe that, by \eqref{vanishing} and Lions' Lemma \cite[Lemma I.1]{LiII1984}, $v_n \to 0$ in $L^q(\RNp)$ for all $2 < q < 2^{\ast}$, and so, by uniqueness of the limit we have $v \equiv 0$. We define then the sequence $(z_n)_n \subset H_0^1(\RNp)$ by $z_n := t_n u_n$ with $t_n \in [0,1]$ satisfying 
$$ E(z_n) = \max_{t \in [0,1]} E(tu_n),$$
(if, for $n \in \N$, $t_n$ is not unique, we choose the smallest value) and we split the proof in the vanishing case (Case 1) into three steps.

\medbreak
\noindent \textbf{Step 1.1: } \textit{$\displaystyle \lim_{n \to \infty} E(z_n) = + \infty$.}
\medbreak
We argue by contradiction. Suppose there exists $M < +\infty$ such that
\[ \liminf_{n \to \infty} E(z_n) \leq M,\]
and define $(k_n)_n \subset H_0^1(\RNp)$ as
$$
k_n := \left(\frac{4M}{\widetilde q_c}\right)^{\frac{1}{2}}v_n = \left(\frac{4M}{\widetilde q_c}\right)^{\frac{1}{2}} \frac{1}{\|u_n\|} u_n,  \qquad \text{for all $n \in \N$},
$$
where $\widetilde q_c > 0$ is the constant given by Proposition \ref{positive-definite-main-proposition}. First, observe that
\begin{equation} \label{convergence-kn}
k_n \weakto 0 \textup{ in } H_0^1(\RNp), \quad k_n \to 0 \textup{ in } L^q(\RNp),\ \ \text{for $2 < q < 2^{\ast}$,} \quad \textup{ and } \quad k_n \to 0\textup{ a.e. in } \RNp.
\end{equation}
Then, by Corollary \ref{lemma-taylor} and \eqref{convergence-kn}, we obtain that
\begin{equation}
E(k_n) \geq \frac{\widetilde q_c}{2} \|k_n\|^2 - C\left(\|k_n\|_{L^{p+1}(\RNp)}^{p+1} + 1_{\{p>2\}} \|k_n\|_{L^3(\RNp)}^3  \right) = 2M + o(1).
\end{equation}
Taking $M$ bigger if necessary, we have that, for all $n \in \N$ large enough,
$$E(k_n) > \frac{3}{2}M.$$
On the other hand, observe that, for $n \in \N$ large enough, $\left(\frac{4M}{\widetilde q_c}\right)^{\frac{1}{2}} \frac{1}{\|u_n\|} \in [0,1].$ Hence, we have that
$$\frac{3}{2}M \leq \liminf_{n \to \infty} E(k_n) \leq \liminf_{n \to \infty} E(z_n) \leq M,$$
which is a contradiction. Thus, the Step 1.1 follows.
\medbreak
\noindent \textbf{Step 2.1:} $\langle E'(z_n), z_n \rangle = 0$ \textit{ for all $n \in \N$ large enough.} 
\medbreak
By Step 1.1 we know that $E(z_n) \to \infty$ as $n \to \infty$. On the other hand, $E(0) = 0$ and $E(u_n) \to d$ as $n \to \infty$. Hence, for $n \in \N$ large enough, $t_n \in (0,1)$ and so, by the definition of $z_n$, the Step 2.1 follows.
\medbreak
\noindent \textbf{Step 3.1:} \textit{Conclusion }\textbf{\underline{Case 1}}.
\medbreak
Observe that, by Step 2.1, for all $n \in \N$ large enough,
\begin{equation*}
E(z_n) = E(z_n) - \frac{1}{2}\langle E'(z_n),z_n \rangle = \int_{\RNp}H(x,z_n)dx,
\end{equation*}
where $H$ is given in Lemma \ref{G-g-estimate} (ii). By Step 1.1, we have that
\begin{equation} \label{contr-case-1}
\lim_{n \to \infty} \int_{\RNp}H(x,z_n)dx = +\infty.
\end{equation}
On the other hand, since $(u_n)_n$ is a Cerami sequence, 
\[ d+o(1) = E(u_n)-\frac{1}{2}\langle E'(u_n), u_n \rangle = \int_{\RNp} H(x,u_n) dx.\]
Then, using the definition of $z_n$ and the fact that $H(x,s)$ is non-decreasing in $s$ by Lemma~\ref{G-g-estimate} (ii), we obtain
\[ \int_{\RNp} H(x,z_n) dx \leq \int_{\RNp} H(x,u_n) dx = d+o(1),\]
which clearly contradicts \eqref{contr-case-1}. Hence, Case 1 (vanishing) cannot happen.

\bigbreak
\noindent \textbf{\underline{Case 2} (Non-vanishing)}: We split the proof into two steps.
\medbreak
\noindent \textbf{Step 1.2}: \textit{There exists $M > 0$ such that $y^n_{N}:= \dist(y^n,\partial \RNp) \leq M$ for all $n \in \N$}.
\medbreak
We assume by contradiction that $y^n_{N} \to +\infty$ as $n \to +\infty$. Then, for all $n \in \N$, we introduce $w_n:= v_n( \cdot + y^n)$ and observe that
\begin{equation}
w_n \weakto w \textup{ in } H^1(\RN), \quad w_n \to w \textup{ in } L^q_{loc}(\RN) \;\text{for $1 \leq q < 2^{\ast}$}, \quad \textup{ and } \quad w_n \to w \textup{ a.e. in } \RN,
\end{equation}
for some $w \in H^1(\RN)$ with $w \not \equiv 0$ (by \eqref{non-vanishing}) and $w \geq 0$. Now, observe that, since $(u_n)_n$ is a Cerami sequence, Lemma~\ref{G-g-estimate} (ii) implies that
\begin{align*}
o(1) & = \frac{1}{\|u_n\|^{p+1}}\Bigl(E(u_n) - \frac{1}{2} \langle E'(u_n),u_n \rangle\Bigr)  = \frac{1}{\|u_n\|^{p+1}} \int_{\RNp} H(x,u_n(x))\,dx\\
  &\ge \frac{1}{\|u_n\|^{p+1}}\Bigl[ \frac{p-1}{2(p+1)} \|u_n^+\|_{L^{p+1}(\RNp)}^{p+1}
  - \int_{\RNp} \bigl( u_c^{p-1}(x) D_{1,p}[u_n^+]^{2} + u_c(x) \, 1_{\{p>2\}} D_{2,p}[u_n^+]^{p}\bigr)dx\Bigr]\\
  &\ge \frac{1}{\|u_n\|^{p+1}}\Bigl[ \frac{p-1}{2(p+1)}\|u_n^+\|_{L^{p+1}(\RNp)}^{p+1}
  - \max\{ \woo^{p-1}, \woo\} \Bigl(D_{1,p}\|u_n^+\|_{L^{2}(\RNp)}^{2} +1_{\{p>2\}} D_{2,p}\|u_n^+\|_{L^p(\RNp)}^{p}\bigr)\Bigr]\\
  &\ge \frac{p-1}{2(p+1)}\|v_n^+\|_{L^{p+1}(\RNp)}^{p+1} -
 \frac{C}{\|u_n\|^{p+1}} \Bigl(\|u_n\|^{2} +1_{\{p>2\}}\|u_n\|^{p} \bigr)
\ge \frac{p-1}{2(p+1)} \|v_n^+\|_{L^{p+1}(\RNp)}^{p+1} +o(1),
\end{align*}
where $C>0$ is a constant independent of $n$. Here we also used Sobolev embeddings and the fact that $\|u_n\| \to \infty$ as $n \to \infty$. Since $p>1$, we thus conclude by Fatou's Lemma that 
$$
0 =\lim_{n \to \infty} \|v_n^+\|_{L^{p+1}(\RNp)}^{p+1}= \liminf_{n \to \infty} \int_{\{x_N \geq -y^n_{N}\}} (w_n^{+})^{p+1} dx \geq \int_{\RN} (w^{+})^p dx.
$$
Hence $w = w^+ \equiv 0$, which clearly is a contradiction. Thus, Step 1.2 follows.
\medbreak
\noindent \textbf{Step 2.2:} \textit{Conclusion }\textbf{\underline{Case 2}}.
\medbreak
By Step 1.2 we know there exists $M > 0$ such that $y^n_{N} \leq M$ for all $n \in \N$. We then define, for all $n \in \N$, $\widetilde{w}_n:= v_n( \cdot + \xi_n)$, where $\xi_n = (y^n_{1}, \ldots, y^n_{N-1},0)$. Again by \eqref{non-vanishing}, we have
\begin{equation}
\widetilde{w}_n \weakto \widetilde{w} \textup{ in } H_0^1(\RNp), \quad \widetilde{w}_n \to \widetilde{w} \textup{ in } L^q_{loc}(\RNp)\;  \text{for $1 \leq q < 2^{\ast}$}, \quad \textup{ and } \quad \widetilde{w}_n \to \widetilde{w} \textup{ a.e. in } \RNp,
\end{equation}
for some $\widetilde{w} \in H_0^1(\RNp)$ with $\widetilde{w} \not \equiv 0$ and $\widetilde{w} \geq 0$. For $n \in \N$, let $\varphi_n:= \widetilde{w}(\cdot - \xi_n) \in H^1_0(\RNp)$. Since $(u_n)_n$ is a Cerami sequence with $\|u_n\| \to \infty$ as $n \to \infty$, we have 
$$
\begin{aligned}
o(1) & =  \frac{\langle E'(u_n),\varphi_n \rangle}{\|u_n\|} = \frac{1}{\|u_n\|}\int_{\RNp} \bigl( \nabla u_n \nabla \varphi_n + u_n \varphi_n - g(x,u_n)\varphi_n \bigr)dx \\  
& = \int_{\RNp} \bigl( \nabla v_n \nabla \varphi_n + v_n \varphi_n\bigr)dx  - \int_{\RNp} \Bigl( \frac{(u_c+u_n^{+})^p-u_c^p}{\|u_n\|}\Bigr)\, \varphi_n dx\\
&=\int_{\RNp} \bigl( \nabla \widetilde{w}_n \nabla \widetilde{w} + \widetilde{w}_n \widetilde{w} \bigr)dx - \int_{\RNp} \Bigl( \frac{(u_c+\|u_n\| \widetilde{w}_n^{+})^p-u_c^p}{\|u_n\|}\Bigr)\, \widetilde{w} dx\\
&=\|\widetilde{w}\|^2 + o(1)- \int_{\RNp} \Bigl( \frac{(u_c+\|u_n\| \widetilde{w}_n^{+})^p-u_c^p}{\|u_n\|}\Bigr)\, \widetilde{w} dx.\\
\end{aligned}
$$
On the other hand, since $p>1$, we have that
\[
  \liminf_{n \to \infty} \frac{(u_c+\|u_n\|\widetilde{w}_n^{+})^p-u_c^p}{\|u_n\|\widetilde{w}_n^{+}} \, \widetilde{w}_n^{+}  \widetilde{w} = + \infty, \quad \textup{ a.e. in $\{\widetilde{w}>0\}$}
\]
and therefore, since $\widetilde{w} \ge 0$ and $\widetilde{w} \not \equiv 0$,
$$
\liminf_{n \to \infty} \int_{\RNp}  \Bigl(\frac{(u_c+u_n^{+}(\cdot + \xi_n))^p - u_c^p}{\|u_n\|}\Bigr)\,  \widetilde{w}dx= \liminf_{n \to \infty} \int_{\{\widetilde{w}>0\}}  \Bigl(\frac{(u_c+u_n^{+}(\cdot + \xi_n))^p - u_c^p}{\|u_n\|}\Bigr)\, \widetilde{w}dx = + \infty
$$
by Fatou's Lemma. This yields a contradiction. Hence, Case 2 (non-vanishing) cannot happen either and thus the result follows. 
\end{proof}

\begin{lem} \label{non-vanishing-lemma}
Let $(u_n)_n$ be a Cerami sequence for $E$ at level $d \in \R \setminus \{0\}$. Then, there exist $R > 0$, $\delta > 0$ and a sequence of points $(y^n)_n \subset \RNp$ such that
\[ \liminf_{n \to \infty} \int_{B_R(y^n) \cap \RNp} u_n^2\, dx \geq \delta.\]
\end{lem}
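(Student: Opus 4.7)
The plan is to argue by contradiction. If the conclusion fails, vanishing occurs:
\[
\lim_{n\to \infty} \sup_{y \in \RNp} \int_{B_R(y) \cap \RNp} u_n^2\,dx = 0 \qquad \text{for every $R > 0$,}
\]
and I will derive the contradiction $E(u_n) \to 0$, which rules out $d \neq 0$.

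First I invoke Proposition~\ref{boundednessPSCerami} to get that $(u_n)_n$ is bounded in $H_0^1(\RNp)$; as at the beginning of the proof of that proposition, testing with $u_n^{-}$ gives $\|u_n^{-}\| \to 0$, so everything reduces to $u_n^{+}$. P.-L.~Lions' vanishing lemma \cite[Lemma~I.1]{LiII1984} applied in $H_0^1(\RNp)$ then yields $u_n^{+} \to 0$ in $L^q(\RNp)$ for every $q \in (2, 2^{*})$. In particular $u_n^{+} \to 0$ in $L^{p+1}(\RNp)$; and when $p > 2$ also in $L^{3}(\RNp)$, since the simultaneous constraints $p > 2$ and $p < 2^{*}-1$ force $N \le 5$ and hence $3 < 2^{*}$.

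With these strong convergences at hand, using the two-sided bounds \eqref{eq:g-1-estimate} and \eqref{eq:G-1-estimate} from Lemma~\ref{G-g-estimate}(i) together with the $L^2$-boundedness of $(u_n)_n$, I obtain
\[
\int_{\RNp} g(x,u_n)\,u_n\,dx = p \int_{\RNp} u_c^{\,p-1}(u_n^{+})^{2}\,dx + o(1),\qquad \int_{\RNp} G(x,u_n)\,dx = \frac{p}{2} \int_{\RNp} u_c^{\,p-1}(u_n^{+})^{2}\,dx + o(1).
\]
Since $(u_n)_n$ is a bounded Cerami sequence, $\langle E'(u_n),u_n\rangle \to 0$, hence $\|u_n\|^2 = p \int_{\RNp} u_c^{\,p-1}(u_n^{+})^2\,dx + o(1)$. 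Plugging these two identities into $E(u_n) = \tfrac{1}{2}\|u_n\|^2 - \int_{\RNp} G(x,u_n)\,dx$, the leading contributions cancel exactly and I conclude $E(u_n) = o(1)$, contradicting $E(u_n) \to d \ne 0$.

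The conceptual obstacle is that the ``potential'' term $\int_{\RNp} u_c^{\,p-1}(u_n^{+})^{2}\,dx$ does \emph{not} vanish under Lions' lemma, because $u_c$ is translation invariant in the $x'$-direction and therefore fails to lie in any $L^{r}(\RNp)$; consequently, one cannot hope to prove $u_n^{+}\to 0$ in $L^{2}$ on any large strip $\{x_N\le R\}$. The trick that makes the argument succeed is that this ``bad'' term enters \emph{with matching coefficients} in $\tfrac{1}{2}\int g(x,u_n)u_n\,dx$ and in $\int G(x,u_n)\,dx$, so that it cancels in the Pohozaev-type combination $E(u_n) - \tfrac{1}{2}\langle E'(u_n),u_n\rangle = \int_{\RNp} H(x,u_n)\,dx$. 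The remaining (sub-leading) contributions are $o(1)$ exactly because of the subcriticality assumption $p < 2^{*}-1$ (and the resulting $3 < 2^{*}$ when $p > 2$), which is the only place where subcriticality is used.
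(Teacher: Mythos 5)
Your proof is correct, but it reaches the contradiction through a genuinely different mechanism than the paper's. Both arguments start identically: assume vanishing, invoke Lions' lemma so that $u_n^{+}\to 0$ in $L^{q}(\RNp)$ for all $2<q<2^{*}$, and feed this into the growth estimates of Lemma~\ref{G-g-estimate}(i). The paper then handles the surviving ``potential'' term $p\int_{\RNp} u_c^{\,p-1}(u_n^{+})^{2}\,dx$ by absorbing it into $\|u_n\|^2$ via the positive definiteness of $q_c$ (Proposition~\ref{positive-definite-main-proposition}): from $\langle E'(u_n),u_n\rangle=o(1)$ and the chain $\|u_n\|^2 - p\int u_c^{p-1}(u_n^{+})^2 \ge q_c(u_n)\ge \widetilde q_c\|u_n\|^2$ one deduces $\|u_n\|\to 0$, and hence $E(u_n)\to 0$ by continuity of $E$. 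You instead observe that this same term appears with matching coefficients in $\tfrac12\int g(x,u_n)u_n\,dx$ and in $\int_{\RNp} G(x,u_n)\,dx$, so it cancels exactly when one computes $E(u_n)$ after eliminating $\tfrac12\|u_n\|^2$ through $\langle E'(u_n),u_n\rangle\to 0$; equivalently, as you note, $E(u_n)-\tfrac12\langle E'(u_n),u_n\rangle=\int_{\RNp}H(x,u_n)\,dx=o(1)$ under vanishing because $H$ vanishes to higher than quadratic order in $s$. Your route proves a weaker intermediate fact (you do not establish $\|u_n\|\to 0$, only $E(u_n)\to 0$), but that suffices, and it avoids a direct appeal to Proposition~\ref{positive-definite-main-proposition} in this step — though that proposition still enters indirectly through Proposition~\ref{boundednessPSCerami}, which you quote for boundedness. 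Both approaches correctly isolate subcriticality as what makes the error terms negligible (via $p+1<2^*$, and $3<2^*$ when $p>2$); yours is the slightly more self-contained argument for this particular lemma.
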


\begin{proof}
We assume by contradiction that, for all $R > 0$,
\[ \liminf_{n \to \infty} \sup_{y \in \RNp} \int_{B_R(y) \cap \RNp} u_n^2 dx = 0.\]
Then, by Lions' \cite[Lemma I.1]{LiII1984}, we have that $u_n \to 0$ in $L^q(\RNp)$ for all $2 < q < 2^{\ast}$. Now, since $(u_n)_n$ is a Cerami sequence, using Lemma~\ref{G-g-estimate} (i), we get
\begin{align*}
o(1) & = \langle E'(u_n), u_n \rangle = \|u_n\|^2 - \int_{\RNp}g(x,u_n(x))u_n\,dx \\
& \ge \|u_n\|^2 - \int_{\RNp} p u_c^{p-1} (u_n^{+})^2 dx - C_{1,p}\|u_n^+\|_{L^{p+1}(\RNp)}^{p+1} - 1_{\{p>2\}} C_{2,p} \|u_n^+\|_{L^3(\RNp)}^3 \\
& \ge q_c(u_n) - C_{1,p}\|u_n\|_{L^{p+1}(\RNp)}^{p+1} - 1_{\{p>2\}} C_{2,p} \|u_n\|_{L^3(\RNp)}^3 \\
& \geq \widetilde q_c \|u_n\|^2 - C_{1,p}\|u_n\|_{L^{p+1}(\RNp)}^{p+1} - 1_{\{p>2\}} C_{2,p} \|u_n\|_{L^3(\RNp)}^3.
\end{align*}
Hence, since $u_n \to 0$ in $L^{q}(\RNp)$ for all $2 < q < 2^*$, we deduce that $\|u_n\| \to 0$. Since $E$ is continuous, this implies that $E(u_n) \to 0$ as $n \to \infty$, contradicting our assumption that $d \not = 0$. The proof is finished.
\end{proof}

\section{Energy estimates} \label{section-energy-estimate}

\noindent We keep using the notation of the introduction and of Section~\ref{section-preliminaries}, which depends on the fixed quantities $c \in (0,\woo)$ and $p \in (1,2^{*}-1)$. Moreover, we will assume $N \ge 2$ throughout this section, which will be of key importance in order to derive the energy estimates we need. The mountain pass value associated to \eqref{non-autonomous-positive} is given by 
\begin{equation} \label{mpLevel}
b := \inf_{\gamma \in \Gamma} \max_{t \in [0,1]} E(\gamma(t)),
\end{equation}
where
$$ 
\Gamma:= \big\{ \gamma \in C([0,1], H_0^1(\RNp)): \gamma(0) = 0,\ E(\gamma(1)) < 0\, \big\}.
$$
We note that $b > 0$ by Lemma \ref{mpGeometry}. We also note that the functional $E$ (given in \eqref{E}) can be written as
\begin{equation}
  \label{eq:E-E-infty-relation}
E(u) = E_{\infty}^{+}(u) - \frac{1}{p+1} \int_{\RNp} \left( (u_c+u^{+})^{p+1}-u_c^{p+1}-(u^{+})^{p+1}-(p+1)u_c^p u^{+} \right) dx,
\end{equation}
where $E_{\infty}^{+}: H_0^1(\RNp) \to \R$ is given by
$$
E_{\infty}^{+}(u) = \frac{1}{2}\|u\|^2-\frac{1}{p+1} \int_{\RNp}(u^{+})^{p+1} dx.
$$

\medbreak
\noindent Now, we introduce the auxiliary (limit) problem
\begin{equation} \label{problemRn}
-\Delta u + u = |u|^{p-1}u, \quad u \in H^1(\RN),
\end{equation}
and its associated energy $E_{\infty}: H^1(\RN) \to \R$ given by 
\begin{equation} \label{Einfty}
E_{\infty}(u) = \frac{1}{2}\int_{\RN} \big( |\gradu|^2 + u^2\big)\,dx - \frac{1}{p+1} \int_{\RN} |u|^{\,p+1} dx.
\end{equation}
Also, we define
\begin{equation}
b_{\infty}:= \inf_{\mathcal{K}_{\infty}}E_{\infty}, \quad \text{ where } \quad \mathcal{K}_{\infty}:= \left\{ u \in H^1(\RN) \setminus \{0\}: E_{\infty}'(u) = 0 \right\}.
\end{equation}
According to \cite[Theorem 1]{BeLiI1983}, \cite[Th\'eor\`eme 1]{BeGaKa1983} and \cite[Theorem 2]{GiNiNi1981}, there exists a ground-state solution $\psi \in C^2(\RN)$ to \eqref{problemRn} which is positive, radially symmetric, and such that
\begin{equation} \label{behaviour-gs-infinity}
\psi(x) \leq C_{GS} |x|^{-\frac{N-1}{2}} e^{-|x|} \quad \textup{ and } \quad |\nabla \psi(x)| \leq C_{GS} |x|^{-\frac{N-1}{2}} e^{-|x|}, \qquad \textup{ as} \quad |x| \to \infty,
\end{equation}
for some $C_{GS} > 0$ depending only on $N$ and $p$. Moreover,
\begin{equation}
  \label{eq:strict-decrease}
\text{$\psi$ is strictly decreasing in the radial variable.}
\end{equation}
Let us also emphasize that
\begin{equation} \label{gs-energy-identity}
0 < b_{\infty} = E_\infty(\psi)= \left(\frac{1}{2}-\frac{1}{p+1} \right) \|\psi\|_{L^{p+1}(\RN)}^{p+1} = \frac{p-1}{2(p+1)} \|\psi\|_{L^{p+1}(\RN)}^{p+1}=  \frac{p-1}{2(p+1)} \inf_{u \in {\cK}_{\infty}}\|u\|_{L^{p+1}(\RN)}^{p+1}.
\end{equation}

\bigbreak
The aim of this section is to show, based on the assumption $N \ge 2$, that
\begin{equation} \label{b-smaller-binfty}
b < b_{\infty}.
\end{equation}
This strict inequality will be crucial to prove the existence result to \eqref{non-autonomous-positive} contained in Section \ref{section-existence}. To this end, let us recall that $u_c(x) \sim e^{-x_N}$ as $x_N \to \infty$. More precisely, it follows from (\ref{eq:def-w-0}) and the definition of $u_c$ that
\begin{equation} \label{behaviour-u0-infinity}
  m_{c,1}\, e^{-x_N} \le u_c(x) \le m_{c,2}\, e^{-x_N}, \quad \text{for $x \in \RNp, \quad $ with}\qquad m_{c,1} :=  \woo  e^{-t_{c,p}}, \quad m_{c,2} :=  \woo 2^{\frac{2}{p-1}}  e^{-t_{c,p}}.
\end{equation}
Moreover, for $r > 0$, we introduce the function
$$
x \mapsto \psi_r(x):= (\psi(x- r e_N) - \epsilon_r)^{+},
$$
where $e_N = (0, \ldots,0,1)$ is the $N$-th coordinate vector and $\epsilon_r>0$ is uniquely defined by (\ref{eq:strict-decrease}) and the property that
$$
\psi > \epsilon_r \qquad \text{in $B_r(0)$}\qquad \text{and}\qquad \psi \le \eps_r \qquad \text{in $\R^N \setminus B_r(0)$.}
$$
We note that, as a consequence of \eqref{behaviour-gs-infinity}, we have
\begin{equation}
  \label{eq:est-epsilon-r}
\eps_r \le  C_{GS} r^{-\frac{N-1}{2}} e^{-r}
\end{equation}
We also note that $\psi_r \in H_0^1(\RNp)$ for every $r \ge 0$.

\medbreak
The rest of the section is devoted to prove the following result from which \eqref{b-smaller-binfty} immediately follows.

\begin{prop} \label{main-proposition-energy-estimate}$ $
There exists $R>0$ and $k>0$ with the following properties: 
  \begin{itemize}
\item[i)]  $E(t\psi_r) < b_{\infty}$ for all $r \geq R,\: t \in [0,k]$.
\item[ii)] $E(k \psi_r) < 0$ for all $r \geq R$.
\end{itemize}
\end{prop}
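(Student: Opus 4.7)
The plan is to compare $E(t\psi_r)$ with the limit functional evaluated at the ground-state scale $E_\infty(t\psi)$, exploiting the fact that $\psi$ is a mountain pass for $E_\infty$, so that $\max_{t \ge 0} E_\infty(t\psi) = b_\infty$ is attained only at $t=1$. Combining the decomposition (\ref{eq:E-E-infty-relation}) with Lemma \ref{inequality-q-refined} applied to $q = p+1 > 2$ yields, for every nonnegative $u \in H_0^1(\RNp)$,
\[
E(u) \le E_\infty^+(u) - \frac{\kappa_{p+1}}{p+1}\int_{\RNp} u_c(x)\, (u^+)^p\, dx.
\]
The key idea is that, at $u = t\psi_r$, the coupling integral on the right provides a negative correction of order $e^{-r}$, while the truncation error $E_\infty^+(t\psi_r) - E_\infty(t\psi)$ is only of order $\eps_r \lesssim r^{-(N-1)/2}e^{-r}$; the gap between these two scales, which demands $N \ge 2$, is what drives the strict inequality.

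Part (ii) is essentially immediate once $k$ is fixed. I choose $k > 1$ with $E_\infty(k\psi) < 0$, possible because $t \mapsto E_\infty(t\psi)$ tends to $-\infty$. From (\ref{behaviour-gs-infinity}) and (\ref{eq:est-epsilon-r}) one checks $\|\psi_r\|_{H^1(\RNp)}^2 \to \|\psi\|_{H^1(\RN)}^2$ and $\|\psi_r\|_{L^{p+1}(\RNp)}^{p+1} \to \|\psi\|_{L^{p+1}(\RN)}^{p+1}$ as $r \to \infty$, so $E_\infty^+(k\psi_r) \to E_\infty(k\psi) < 0$. Since the coupling integral above is nonnegative, $E(k\psi_r) \le E_\infty^+(k\psi_r) < 0$ for all $r \ge R$ when $R$ is sufficiently large.

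For (i) I need two quantitative estimates. First, a first-order Taylor expansion of $(\psi - \eps_r)^q$ in $\eps_r$ combined with the exponential tail bounds (\ref{behaviour-gs-infinity}) yields
\[
\bigl|E_\infty^+(t\psi_r) - E_\infty(t\psi)\bigr| \le C_1\, \eps_r \le C_2\, r^{-(N-1)/2}e^{-r} \qquad \text{uniformly for $t \in [0,k]$.}
\]
Second, using $u_c(x) \ge m_{c,1} e^{-x_N}$ from (\ref{behaviour-u0-infinity}) and the change of variables $y = x - re_N$,
\[
\int_{\RNp} u_c\, \psi_r^p\, dx \ge m_{c,1}\, e^{-r} \int_{B_r(0)} e^{-y_N}(\psi(y) - \eps_r)^p\, dy \ge c_2\, e^{-r},
\]
for $r$ large, where $c_2 > 0$ comes from dominated convergence applied to the finite integral $\int_{\RN} e^{-y_N}\psi^p\, dy$. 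Combining these gives the master inequality
\[
E(t\psi_r) \le E_\infty(t\psi) + C_1\, \eps_r - c_3\, t^p e^{-r}, \qquad t \in [0,k],\ r \ge R,
\]
for some constant $c_3 > 0$.

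To conclude (i), I split the interval $[0,k]$. Fix any $\delta \in (0,1)$. Since $t \mapsto E_\infty(t\psi)$ is strictly increasing on $[0,1]$ with unique maximum $b_\infty$ at $t=1$, on $[0,\delta]$ the master inequality gives $E(t\psi_r) \le E_\infty(\delta\psi) + C_1\, \eps_r < b_\infty$ for $r$ large. On $[\delta, k]$ it gives $E(t\psi_r) \le b_\infty + C_1\, \eps_r - c_3\, \delta^p\, e^{-r}$, and here the hypothesis $N \ge 2$ is essential: it forces $\eps_r = o(e^{-r})$, so the coupling gain strictly dominates the perturbation error for $r$ large. The main obstacle is precisely this matching of scales between the truncation cost $\eps_r$ and the coupling gain $e^{-r}$; it is also the reason the construction collapses in dimension $N=1$.
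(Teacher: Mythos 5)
Your proposal is correct and follows essentially the same route as the paper: you use the same key ingredients --- Lemma~\ref{inequality-q-refined} with $q=p+1$ to obtain the coupling gain $\gtrsim t^p e^{-r}\int u_c\psi_r^p\,dx$, the decay rates \eqref{behaviour-gs-infinity}, \eqref{eq:est-epsilon-r}, \eqref{behaviour-u0-infinity}, and the observation that $N\ge 2$ forces $\eps_r = o(e^{-r})$. The only cosmetic differences are that the paper obtains the bound $E_\infty^+(t\psi_r)\le b_\infty + C\eps_r t^{p+1}$ by testing the equation for $\psi$ against $(\psi-\eps_r)^+$ rather than by Taylor-expanding against $E_\infty(t\psi)$, and that the paper avoids your $[0,\delta]\cup[\delta,k]$ split by factoring $t^p$ out of the combined bound and noting that $kC_1 r^{-(N-1)/2}-C_2<0$ uniformly for $r$ large.
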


\medbreak
We split the proof of this proposition into several lemmas.

\begin{lem} \label{lemma1-energy-estimate}
There exists $C_1 > 0$ with 
\begin{equation} \label{step1-energy-estimate}
E_{\infty}^{+} (t\psi_r) \leq b_{\infty} + C_1 e^{-r} r^{-\frac{N-1}{2}} t^{p+1} \qquad \text{for all $t,r >0$. }
\end{equation}
\end{lem}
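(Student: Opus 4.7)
The plan is to compare $E_\infty^+(t\psi_r)$ to the corresponding quantity for the full ground state,
\[
E_\infty(t\psi) = \tfrac{t^2}{2}\|\psi\|_{H^1(\RN)}^2 - \tfrac{t^{p+1}}{p+1}\|\psi\|_{L^{p+1}(\RN)}^{p+1},
\]
which is maximized over $t\ge 0$ exactly at the value $b_\infty$ by \eqref{gs-energy-identity}. Translation invariance of the integrals yields $\|\psi_r\|^2 = \|\phi_r\|_{H^1(\RN)}^2$ and $\|\psi_r\|_{L^{p+1}(\RNp)}^{p+1} = \|\phi_r\|_{L^{p+1}(\RN)}^{p+1}$, where $\phi_r := (\psi - \varepsilon_r)^+$, which is supported on $B_r(0)$ by \eqref{eq:strict-decrease} and the definition of $\varepsilon_r$.

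I would then establish two quantitative estimates comparing $\phi_r$ with $\psi$. First, the $H^1$-norm does not grow: $\|\phi_r\|_{H^1(\RN)}^2 \le \|\psi\|_{H^1(\RN)}^2$. Indeed, the gradient of $\phi_r$ coincides with $\nabla \psi$ on $B_r(0)$ and vanishes elsewhere, while on $\{\psi > \varepsilon_r\}$ one has the pointwise bound $(\psi - \varepsilon_r)^2 = \psi^2 - \varepsilon_r(2\psi - \varepsilon_r) \le \psi^2$, since $2\psi - \varepsilon_r \ge \varepsilon_r > 0$ there. Second, the $L^{p+1}$ defect is small: splitting
\[
\|\psi\|_{L^{p+1}(\RN)}^{p+1} - \|\phi_r\|_{L^{p+1}(\RN)}^{p+1} = \int_{B_r(0)}\bigl[\psi^{p+1}-(\psi-\varepsilon_r)^{p+1}\bigr]\,dy + \int_{\RN\setminus B_r(0)}\psi^{p+1}\, dy,
\]
the first integrand is controlled by $(p+1)\psi^p\varepsilon_r$ via the mean value theorem applied to $s \mapsto s^{p+1}$, and the second by $\varepsilon_r \psi^p$ since $\psi \le \varepsilon_r$ outside $B_r(0)$. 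Because $\psi \in L^p(\RN)$ by the decay in \eqref{behaviour-gs-infinity} and $\varepsilon_r \le C_{GS}\,r^{-(N-1)/2} e^{-r}$ by \eqref{eq:est-epsilon-r}, this yields a defect bounded by $C\, r^{-(N-1)/2} e^{-r}$ for some constant $C$ independent of $r > 0$.

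Putting everything together through the identity
\[
E_\infty^+(t\psi_r) = E_\infty(t\psi) + \tfrac{t^2}{2}\bigl(\|\phi_r\|_{H^1(\RN)}^2 - \|\psi\|_{H^1(\RN)}^2\bigr) + \tfrac{t^{p+1}}{p+1}\bigl(\|\psi\|_{L^{p+1}(\RN)}^{p+1} - \|\phi_r\|_{L^{p+1}(\RN)}^{p+1}\bigr),
\]
the first correction is non-positive by the $H^1$ estimate and the second is at most $\tfrac{C}{p+1}\,e^{-r} r^{-(N-1)/2} t^{p+1}$ by the $L^{p+1}$ estimate, while $E_\infty(t\psi) \le b_\infty$. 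The conclusion follows with $C_1 = C/(p+1)$. The only mildly delicate point will be to guarantee that the bound \eqref{eq:est-epsilon-r} on $\varepsilon_r$ holds uniformly for all $r > 0$; this is automatic in dimension $N \ge 2$, since $r^{-(N-1)/2}e^{-r} \to \infty$ as $r \to 0^+$ trivially dominates the bounded quantity $\varepsilon_r \le \psi(0)$.
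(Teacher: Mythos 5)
Your proof is correct, and it takes a genuinely different route from the paper's. The paper exploits the equation satisfied by $\psi$ in a stronger way: it tests the equation with $(\psi-\epsilon_r)^+$ to replace $\int_{B_r(0)}|\nabla\psi|^2$ by the purely zeroth-order quantity $\int_{B_r(0)}\bigl(-\psi(\psi-\epsilon_r)+\psi^p(\psi-\epsilon_r)\bigr)$, and then performs an algebraic regrouping that isolates the factor $\tfrac{p+1}{2}t^2-t^{p+1}$ (whose maximum over $t>0$ is $\tfrac{p-1}{2}$) multiplying $\int_{B_r(0)}(\psi-\epsilon_r)\psi^p\,dx$; the remainder is controlled by a mean value estimate together with (\ref{eq:est-epsilon-r}). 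Your argument instead decouples the quadratic and nonlinear terms from the outset via the decomposition around $E_\infty(t\psi)$. You only invoke the equation for $\psi$ through the Nehari identity $\|\psi\|_{H^1(\RN)}^2=\|\psi\|_{L^{p+1}(\RN)}^{p+1}$, which gives $\sup_{t>0}E_\infty(t\psi)=b_\infty$; the crucial observation that the $H^1$ deficit is non-positive is then a purely pointwise fact ($(\psi-\epsilon_r)^2\le\psi^2$ on $B_r(0)$ and support shrinks), not a consequence of the PDE. What each approach buys: the paper's manipulation is tailored to mirror the companion Lemma~\ref{lemma2-energy-estimate} and produces a single term to estimate; your version is more modular and transparent, isolating the $L^{p+1}$ defect as the sole source of error, at the modest cost of needing $\psi\in L^p(\RN)$ and the slightly larger constant $(p+2)$ in place of $p$. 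Both yield a bound of the form $b_\infty + C_1 r^{-(N-1)/2}e^{-r}t^{p+1}$, which is all the lemma requires. Your closing remark about extending (\ref{eq:est-epsilon-r}) to small $r$ by enlarging $C_{GS}$ is a fair point; the same caveat implicitly underlies the paper's own use of (\ref{eq:est-epsilon-r}), so it is not a defect peculiar to your argument.
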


\begin{proof}
Let $t,r>0$. Directly observe that, by the definition of $\psi_r$,
\begin{equation}
  \label{step1-eq1}
E_{\infty}^{+}(t\psi_r) = \frac{t^2}{2} \|\psi_r\|^2 - \frac{t^{p+1}}{p+1} \int_{\RNp} \psi_r^{p+1} dx = \frac{t^2}{2} \int_{B_r(0)} \big( |\nabla\psi|^2 + (\psi - \epsilon_r)^2 \big)\, dx - \frac{t^{p+1}}{p+1} \int_{B_r(0)} (\psi- \epsilon_r)^{p+1} dx. 
\end{equation}
On the other hand, since $\psi$ is a solution to \eqref{problemRn} and  $(\psi-\epsilon_r)^{+} \in H^1(\RN)$, using $(\psi-\epsilon_r)^{+}$ as test function in \eqref{problemRn}, we obtain that 
\begin{equation*}
\begin{aligned}
\int_{B_r(0)} |\gradpsi|^2 dx & = \int_{\RN} \gradpsi \nabla (\psi-\epsilon_r)^{+} dx \\ 
& = \int_{\RN} \left( - \psi (\psi - \epsilon_r)^{+} + \psi^p (\psi - \epsilon_r)^{+} \right) dx =  \int_{B_r(0)} \left( - \psi (\psi - \epsilon_r) + \psi^p (\psi - \epsilon_r) \right) dx.
\end{aligned}
\end{equation*}
Substituting the above identity into \eqref{step1-eq1} and using the mean value theorem, we find that
\begin{align*}
E_{\infty}^{+}(t\psi_r)&  = \frac{t^2}{2} \int_{B_r(0)} \left( -\psi(\psi-\epsilon_r) + \psi^p (\psi-\epsilon_r) + (\psi-\epsilon_r)^2 \right) dx - \frac{t^{p+1}}{p+1} \int_{B_r(0)} (\psi-\epsilon_r)^{p+1} dx \\
& = - \frac{\epsilon_r t^2}{2} \int_{B_r(0)} (\psi-\epsilon_r) dx +
\frac{1}{p+1} \int_{B_r(0)} (\psi-\epsilon_r) \left[ \frac{p+1}{2} t^2 \psi^p - t^{p+1} (\psi-\epsilon_r)^p \right] dx \\
& \leq \frac{1}{p+1} \int_{B_r(0)} (\psi-\epsilon_r) \left[ \frac{p+1}{2} t^2 \psi^p - t^{p+1} (\psi-\epsilon_r)^p \right] dx \\
& = \frac{1}{p+1} \left(\frac{p+1}{2}t^2-t^{p+1} \right) \int_{B_r(0)} (\psi-\epsilon_r) \psi^p dx + \frac{t^{p+1}}{p+1} \int_{B_r(0)} (\psi-\epsilon_r) \left[ \psi^p - (\psi-\epsilon_r)^p \right] dx\\
& \leq \frac{p-1}{2(p+1)} \int_{B_r(0)} (\psi-\epsilon_r) \psi^p dx
+ \frac{\epsilon_r\,p\, t^{\,p+1}}{p+1} \int_{B_r(0)} (\psi-\epsilon_r) \psi^{p-1} dx  \\
& \leq \frac{p-1}{2(p+1)} \int_{\RN} \psi^{\,p+1} dx + \frac{\epsilon_r\,p\, t^{\,p+1}}{p+1} \int_{\RN} (\psi-\epsilon_r)^{+} \psi^{p-1} dx.
\end{align*}
Using \eqref{gs-energy-identity} and (\ref{eq:est-epsilon-r}), we deduce that
$$
E_{\infty}^{+}(t\psi_r)  \leq b_{\infty} + \frac{\epsilon_r\,p\, t^{\,p+1}}{p+1} \int_{\RN} (\psi-\epsilon_r)^{+} \psi^{p-1} dx  \leq b_{\infty} +  \frac{p}{p+1}C_{GS} r^{-\frac{N-1}{2}} e^{-r} t^{p+1} \int_{\RN} \psi^p dx.  
$$
Hence (\ref{step1-energy-estimate}) holds with $C_1 = \frac{p C_{GS}}{p+1} \int_{\RN} \psi^p dx$.
\end{proof}

\begin{lem} \label{lemma2-energy-estimate}
There exists ${R'}\ge 1$ and $C_2 > 0$ with 
\begin{equation} \label{step2-energy-estimate}
E_{\infty}^{+}(t\psi_r) - E(t \psi_r) \geq C_2 e^{-r}\, t^{\,p}  \qquad \text{for all $t >0$, $r \geq {R'}$.}
\end{equation}
\end{lem}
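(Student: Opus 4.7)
The plan is to exploit the identity (\ref{eq:E-E-infty-relation}), which gives
\begin{equation*}
E_{\infty}^{+}(t\psi_r) - E(t\psi_r) = \frac{1}{p+1}\int_{\RNp}\Bigl[(u_c+t\psi_r)^{p+1}-u_c^{p+1}-(t\psi_r)^{p+1}-(p+1)u_c^{p}(t\psi_r)\Bigr]\,dx,
\end{equation*}
since $t\psi_r \ge 0$. The first step is to apply Lemma~\ref{inequality-q-refined} with $q=p+1>2$ (legal because $p>1$) and $a=u_c(x)$, $b=t\psi_r(x)$ to bound the bracketed integrand from below by $\kappa_{p+1}\,u_c(x)(t\psi_r(x))^p$. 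This reduces the claim to producing a lower bound of order $e^{-r}$ for
\begin{equation*}
J(r):=\int_{\RNp}u_c(x)\,\psi_r(x)^p\,dx.
\end{equation*}

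Next I would use the explicit pointwise lower bound $u_c(x)\ge m_{c,1}e^{-x_N}$ from (\ref{behaviour-u0-infinity}) to replace $u_c$ by $m_{c,1}e^{-x_N}$. Since $\psi_r$ is supported in $\overline{B_r(re_N)}\subset \overline{\RNp}$, the translation $y=x-re_N$ yields the crucial factorisation
\begin{equation*}
\int_{\RNp}e^{-x_N}\psi_r(x)^p\,dx \;=\; e^{-r}\int_{B_r(0)}e^{-y_N}\bigl(\psi(y)-\eps_r\bigr)_{+}^{p}\,dy.
\end{equation*}
So the goal becomes to bound the remaining integral below by a positive constant for all sufficiently large $r$.

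For this, I would set
\begin{equation*}
I \;:=\; \int_{\RN}e^{-y_N}\psi(y)^p\,dy
\end{equation*}
and show $0<I<\infty$. Positivity is immediate from $\psi>0$. For finiteness, combine the crude bound $e^{-y_N}\le e^{|y|}$ with the decay estimate (\ref{behaviour-gs-infinity}), which gives $e^{-y_N}\psi^p\le C |y|^{-p(N-1)/2}e^{-(p-1)|y|}$ outside a fixed ball; since $p>1$, this is integrable. Then $r\mapsto \eps_r$ is decreasing to $0$ and $r\mapsto B_r(0)$ is increasing to $\RN$ (using the strict monotonicity (\ref{eq:strict-decrease}) of $\psi$), so by monotone convergence
\begin{equation*}
\int_{B_r(0)}e^{-y_N}\bigl(\psi(y)-\eps_r\bigr)_{+}^{p}\,dy \;\longrightarrow\; I \qquad \text{as } r\to\infty.
\end{equation*}
Choose $R'\ge 1$ so that this integral is $\ge I/2$ for all $r\ge R'$, and set $C_2:=\frac{\kappa_{p+1}\,m_{c,1}\,I}{2(p+1)}>0$. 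Combining everything, (\ref{step2-energy-estimate}) follows.

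The main technical point is the verification that $I$ is finite and the identification of $I$ as the monotone limit; once the change of variables is in place, everything else is routine. Note in particular that the translation $y=x-re_N$ is what converts the \emph{decay} of $u_c$ in $x_N$ into the \emph{explicit factor} $e^{-r}$, which is precisely the scale appearing in the Ansatz of Lemma~\ref{lemma1-energy-estimate} and will make the eventual comparison with $e^{-r}r^{-(N-1)/2}t^{p+1}$ in the proof of Proposition~\ref{main-proposition-energy-estimate} favourable in dimension $N\ge 2$.
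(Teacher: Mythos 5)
Your proof is correct and takes a genuinely different route from the paper's at the decisive lower-bound step. Both arguments start from the identity (\ref{eq:E-E-infty-relation}) and Lemma~\ref{inequality-q-refined} with $q=p+1$, reducing the claim to a lower bound on $\int_{\RNp}u_c\,\psi_r^p\,dx$. The paper then restricts to the lower half-ball $B_r^-(0)=\{y\in B_r(0):y_N\le 0\}$, where $u_c(\cdot+re_N)\ge m_{c,1}e^{-r}$ with no residual $e^{-y_N}$ weight, uses radial symmetry to write $\int_{B_r^-(0)}\psi^p=\tfrac12\int_{B_r(0)}\psi^p$, and controls the mean-value-theorem error $\eps_r\,p\,c_p\int_{\RN}\psi^{p-1}$ via the bound $\eps_r\le C_{GS}\,r^{-(N-1)/2}e^{-r}$ from (\ref{eq:est-epsilon-r}); the factor $r^{-(N-1)/2}\to 0$ is precisely where the hypothesis $N\ge2$ is used. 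You instead keep the full ball with the weight $e^{-y_N}$, observe that $I:=\int_{\RN}e^{-y_N}\psi^p\,dy$ is finite because $p>1$ lets the decay rate $e^{-p|y|}$ of $\psi^p$ absorb the unfavourable growth $e^{-y_N}\le e^{|y|}$, and then close by monotone convergence (using that $r\mapsto\eps_r$ decreases to $0$ and $B_r(0)\nearrow\RN$). Your version is cleaner in that it dispenses with the half-ball split, the mean-value error term, and — notably — the condition $N\ge 2$ at this step (that dimensional restriction remains genuinely necessary elsewhere, in Lemma~\ref{lemma1-energy-estimate} and in the synthesis carried out in the proof of Proposition~\ref{main-proposition-energy-estimate}, so nothing is lost globally). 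The paper's route gives the explicit constant $C_2=\frac{\kappa m_{c,1}}{4}\int_{B_1(0)}\psi^p\,dx$, whereas yours depends on the less elementary quantity $I$; both are perfectly adequate.
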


\begin{proof}
  Let $t >0$ be arbitrary but fixed. Using Lemma \ref{inequality-q-refined} with $q = p+1$, $\kappa:= \kappa_{q}>0$, the identity (\ref{eq:E-E-infty-relation}), the lower bound in (\ref{behaviour-u0-infinity}) and the mean value theorem, we deduce that, for all $r \geq 1$,
\begin{align*}
E_{\infty}^{+}(t\psi_r)  - E(t\psi_r)  & = \frac{1}{p+1} \int_{\RNp} \left(  (u_c+t\psi_r)^{p+1}-u_c^{p+1} - (t\psi_r)^{p+1} -(p+1)u_c^p t \psi_r  \right) dx  \geq \kappa t^p \int_{\RNp} u_c \psi_r^p dx\\
  &= \kappa t^p \int_{\RNp} u_c\bigl((\psi(\cdot-re_N) - \epsilon_r)^{+}\bigr)^p dx = \kappa t^p \int_{B_r(0)} u_c(\cdot+re_N) (\psi - \epsilon_r)^p dx\\
  &\geq \kappa t^p  \int_{B_r(0)} u_c(\cdot+re_N)\bigl( \psi^p  -\epsilon_r p \psi^{p-1}\bigr)\, dx  \geq \kappa t^p \Bigl( \int_{B_r^-(0)} u_c(\cdot+re_N ) \psi^p dx - \epsilon_r p c_p \int_{\RN} \psi^{p-1} dx\Bigr)\\
  &\ge \kappa t^p \Bigl(m_{c,1}e^{-r}  \int_{B_r^{\,-}(0)} \psi^p dx -  p c_p C_{GS} r^{-\frac{N-1}{2}} e^{-r} \int_{\RN} \psi^{p-1} dx\Bigr)\\
  &\ge \kappa t^p  e^{-r} \Bigl(\frac{m_{c,1}}{2} \int_{B_r(0)} \psi^p dx - p c_p C_{GS} r^{-\frac{N-1}{2}} \int_{\RN} \psi^{p-1} dx\Bigr) \\
 & \ge \kappa t^p  e^{-r} \Bigl(\frac{m_{c,1}}{2} \int_{B_1(0)} \psi^p dx - p c_p C_{GS} r^{-\frac{N-1}{2}} \int_{\RN} \psi^{p-1} dx\Bigr)
\end{align*}
where we have set $B_r^{\,-}(0):= \{x \in B_r(0)\::\: x_N \le 0\}$. Since $N \ge 2$, we may choose ${R'} \ge 1$ sufficiently large to guarantee that 
$$
p c_p C_{GS} r^{-\frac{N-1}{2}} \int_{\RN} \psi^{p-1} dx \le \frac{m_{c,1}}{4} \int_{B_1(0)} \psi^p dx, \qquad \text{for $r \ge {R'}$},
$$
and therefore
\begin{equation*}
  E_{\infty}^{+}(t\psi_r) - E(t\psi_r)  \geq C_2 e^{-r}\, t^{\,p}, \quad \text{for $r \ge {R'}$, \quad with}\quad
C_2:= \frac{\kappa m_{c,1}}{4} \int_{B_1(0)} \psi^p dx.
\end{equation*}
Hence the claim follows.
\end{proof}

\begin{lem} \label{lemma3-energy-estimates}
Let ${R'}\ge 1$ be given as in Lemma~\ref{lemma2-energy-estimate}. Then there exist $k > 0$ with $E(k\psi_r) < 0$ for all $r \ge {R'}$.
\end{lem}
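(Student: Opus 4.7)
The strategy is to observe that $E(t\psi_r)$ is dominated by the ``free'' functional $E_\infty^+(t\psi_r)$, and that the latter can be pushed below zero uniformly in $r$ for sufficiently large $t$, thanks to the super-quadratic nature of the nonlinearity together with uniform control on the norms of $\psi_r$.

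\textbf{Step 1: Reduction to $E_\infty^+$.} Since $C_2 e^{-r} t^p \ge 0$, Lemma~\ref{lemma2-energy-estimate} gives directly
\[
E(t\psi_r) \le E_\infty^+(t\psi_r) = \frac{t^2}{2}\|\psi_r\|^2 - \frac{t^{p+1}}{p+1}\|\psi_r\|_{L^{p+1}(\RNp)}^{p+1}, \qquad \text{for all } t>0,\ r \ge {R'}.
\]

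\textbf{Step 2: Uniform bounds on $\psi_r$.} From $\psi_r \le \psi(\cdot - re_N)$ pointwise and $|\nabla \psi_r| \le |\nabla \psi(\cdot - re_N)|$ a.e.\ on $\RNp$, one obtains
\[
\|\psi_r\|^2 \le \|\psi\|_{H^1(\RN)}^2 =: B, \qquad \text{for every } r>0.
\]
For the $L^{p+1}$-norm, the strict radial monotonicity of $\psi$ (see~\eqref{eq:strict-decrease}) implies that $r\mapsto \eps_r$ is strictly decreasing, so $\eps_r \le \eps_1$ whenever $r \ge 1$. Enlarging $R'$ if necessary so that $R' \ge 1$, we get
\[
\|\psi_r\|_{L^{p+1}(\RNp)}^{p+1} = \int_{B_r(0)}(\psi-\eps_r)^{p+1}\,dx \ge \int_{B_1(0)}(\psi-\eps_1)^{p+1}\,dx =: A > 0, \qquad \text{for all } r \ge R'.
\]

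\textbf{Step 3: Conclusion.} Combining Steps 1--2 yields
\[
E(t\psi_r) \le \frac{t^2}{2}B - \frac{t^{p+1}}{p+1}A \qquad \text{for all } t>0,\ r \ge R'.
\]
Since $p+1 > 2$, the right-hand side tends to $-\infty$ as $t \to \infty$, so we may fix $k > 0$ large enough (depending only on $A$, $B$ and $p$, and in particular independent of $r$) so that this upper bound is strictly negative. This produces $E(k\psi_r) < 0$ for every $r \ge R'$, as required.

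There is no genuine obstacle here: the only delicate point is ensuring that the lower bound on $\|\psi_r\|_{L^{p+1}(\RNp)}$ survives the limit $r\to\infty$, which follows cleanly from the monotonicity of $\eps_r$ in $r$.
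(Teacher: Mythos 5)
Your argument is correct and follows the same route as the paper: drop the nonnegative correction term to reduce to $E_\infty^+(t\psi_r)$, bound $\|\psi_r\|^2$ from above by $\|\psi\|_{H^1(\R^N)}^2$, bound $\|\psi_r\|_{L^{p+1}}^{p+1}$ from below via the monotonicity of $r \mapsto \eps_r$ and the fixed ball $B_1(0)$, and then pick $k$ large using $p+1>2$. No meaningful differences (the paper already has $R'\ge 1$ built into Lemma~\ref{lemma2-energy-estimate}, so your "enlarging $R'$ if necessary" is not needed, but it is harmless).
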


\begin{proof}
  Let $k >0$. For $r \ge {R'} \ge 1$ we have, by Lemma~\ref{lemma2-energy-estimate} and since the map $r \mapsto \epsilon_r$ is strictly decreasing by
(\ref{eq:strict-decrease}),  
\begin{align*}
  E(k \psi_r) & \leq E_{\infty}^+(k\psi_r)  = \frac{k^2}{2} \|\psi_r\|^2 - \frac{k^{p+1}}{p+1} \int_{\RNp} \psi_r^{p+1} dx \leq \frac{k^2}{2} \|\psi\|_{H^1(\RN)}^2 - \frac{k^{p+1}}{p+1} \int_{B_r(0)} \big(\psi - \epsilon_r \big)^{p+1} dx \\
  &\leq \frac{k^2}{2} \|\psi\|_{H^1(\RN)}^2 - \frac{k^{p+1}}{p+1} \int_{B_1(0)} \big(\psi - \epsilon_1 \big)^{p+1} dx 
\end{align*}
Since $\int_{B_1(0)} \big(\psi - \epsilon_1 \big)^{p+1} dx >0$ by (\ref{eq:strict-decrease}), we may choose
$$
k> \left( \frac{(p+1)\|\psi\|_{H^1(\RN)}^2}{2 \int_{B_1(0)} \big(\psi - \epsilon_1 \big)^{p+1} dx}\right)^{\frac{1}{p-1}}
$$
which implies that $ E(k \psi_r) < 0$ for $r \ge {R'}$, as claimed. 
\end{proof}

\begin{proof}[\textbf{Proof of Proposition \ref{main-proposition-energy-estimate}}]
  Let ${R'}\ge 1$ be given by Lemma~\ref{lemma2-energy-estimate}, and let $k>0$ be given by Lemma~\ref{lemma3-energy-estimates}. For $r \ge {R'}$ and $t \in [0,k]$ we then have, by Lemmas \ref{lemma1-energy-estimate} and \ref{lemma2-energy-estimate},
$$
E(t \psi_r)  = E_{\infty}^{+}(t \psi_r) - (E_{\infty}^{+}(t\psi_r)-E(t\psi_r))  \leq b_{\infty} + e^{-r}  \bigl( t^{p+1} C_1 r^{-\frac{N-1}{2}} - t^p C_2 \bigr) 
\le b_{\infty} + e^{-r} t^p \bigl (k C_1 r^{-\frac{N-1}{2}} - C_2 \bigr). 
$$
Since $N \ge 2$, we may fix $R \ge R'$ with the property that $k C_1 r^{-\frac{N-1}{2}} \le \frac{C_2}{2}$ for $r \ge R$, which implies that 
$$
E(t \psi_r) \le b_{\infty} - \frac{C_2}{2} e^{-r} t^p < b_\infty \quad \text{for $t \in (0,k]$, $r \ge R$.}
$$
Since also $E(0)= 0 < b_{\infty}$, we thus obtain that $E(t \psi_r) < b_{\infty}$ for $t \in [0,k]$, $r \ge R$. Moreover, by Lemma~\ref{lemma3-energy-estimates} we have $E(k\psi_r) < 0$ for all $r \ge R$ since $R \ge R'$. The proof is finished.
\end{proof}

\section{The existence result} \label{section-existence}

\noindent We keep using the notation of the introduction and of Section~\ref{section-preliminaries}, which depends on the fixed quantities $c \in (0,\woo)$ and $p \in (1,2^{*}-1)$. Moreover, we will assume $N \ge 2$ throughout this section, which will allow us to prove the existence of a non-trivial solution to \eqref{non-autonomous-positive}. This will conclude the proof of Theorem~\ref{th1-existence-refined}.

\begin{thm} \label{existence-non-autonomous}
Let $N \geq 2$. Then there exists a non-trivial solution $u \in H_0^1(\RNp)$ to \eqref{non-autonomous-positive} which, in particular, is a positive solution to \eqref{non-autonomous}.
\end{thm}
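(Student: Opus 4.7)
The plan is to apply the Mountain Pass Theorem in its Cerami version to $E$. First I would use Lemma~\ref{mpGeometry} to confirm the mountain pass geometry, so that, with $b$ as in \eqref{mpLevel}, a standard Ekeland-type argument produces a Cerami sequence $(u_n)_n \subset H_0^1(\RNp)$ with $E(u_n) \to b$ and $(1+\|u_n\|)\|E'(u_n)\|_{H^{-1}} \to 0$. Proposition~\ref{boundednessPSCerami} gives the boundedness of $(u_n)_n$, so up to a subsequence $u_n \weakto u_*$ in $H_0^1(\RNp)$ and $u_n \to u_*$ a.e.\ in $\RNp$. Using the growth bound \eqref{eq:g-1-estimate} together with standard weak-continuity arguments, $u_*$ is a critical point of $E$ and hence a (non-negative) solution of \eqref{non-autonomous-positive}. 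The strict inequality $b < b_\infty$ that will be crucial below follows from Proposition~\ref{main-proposition-energy-estimate}, applied to paths of the form $s \mapsto s k \psi_r$ with $r \ge R$.

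The key step is to show $u_* \not\equiv 0$. Since $b > 0$, Lemma~\ref{non-vanishing-lemma} yields $R_0 > 0$, $\delta > 0$ and points $y^n = (y^{n\prime}, y^n_N) \in \RNp$ with $\liminf_n \int_{B_{R_0}(y^n) \cap \RNp} u_n^2 \, dx \ge \delta$. Passing to a subsequence, either (a) the sequence $(y^n_N)_n$ is bounded, or (b) $y^n_N \to \infty$.

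In case (a), I would set $\xi_n := (y^{n\prime}, 0) \in \partial \RNp$ and consider $\widetilde u_n := u_n(\cdot + \xi_n)$. Since $E$ is invariant under translations parallel to $\partial \RNp$, the sequence $(\widetilde u_n)_n$ is again a bounded Cerami sequence at level $b$, and the non-vanishing mass around $y^n$ is now trapped in the fixed ball $B_{R_0 + M}(0)$, where $M := \sup_n y^n_N < \infty$. Local compactness of the Sobolev embedding then forces the weak limit $\widetilde u_*$ to be nonzero, and $\widetilde u_*$ is the sought-after solution to \eqref{non-autonomous-positive}.

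The main obstacle is to rule out case (b). Here, set $w_n(x) := u_n(x + y^n)$, which (after zero extension) is uniformly bounded in $H^1(\RN)$; up to a subsequence $w_n \weakto w$ in $H^1(\RN)$ with $w \ge 0$ (as $u_n^- \to 0$, cf.\ the proof of Proposition~\ref{boundednessPSCerami}) and $w \not\equiv 0$ by the non-vanishing bound. Since $u_c(x + y^n) \to 0$ uniformly on compacts, one can pass to the limit in the weak form of $-\Delta w_n + w_n = g(x + y^n, w_n) + o(1)$ to conclude that $w$ solves \eqref{problemRn}, whence $w \in \mathcal{K}_\infty$ and $E_\infty(w) \ge b_\infty$. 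Finally, combining the energy-Nehari identity for the Cerami sequence with the lower bound \eqref{eq:H-lower-est}, and using that $u_c \to 0$ uniformly on any fixed ball $B_{R'}(y^n)$ as $n \to \infty$, I expect to obtain
\[
b + o(1) \,=\, E(u_n) - \tfrac{1}{2}\langle E'(u_n), u_n\rangle \,=\, \int_{\RNp} H(x, u_n)\, dx \,\ge\, \tfrac{p-1}{2(p+1)} \int_{B_{R'}(0)} w^{p+1}\, dx + o(1),
\]
where the $u_c$-dependent error terms in \eqref{eq:H-lower-est} vanish in the limit thanks to the uniform $L^2$ and $L^p$ bounds on $u_n$. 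Letting $R' \to \infty$ and using $E_\infty(w) = \tfrac{p-1}{2(p+1)} \|w\|_{L^{p+1}(\RN)}^{p+1}$ gives $b \ge E_\infty(w) \ge b_\infty$, contradicting $b < b_\infty$. This contradiction rules out case (b), so $u_* \not\equiv 0$, and the proof is complete.
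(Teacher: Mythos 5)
Your proposal is correct and follows essentially the same route as the paper's proof: obtain a bounded Cerami sequence at level $b$ via Lemma~\ref{mpGeometry}, Proposition~\ref{boundednessPSCerami} and Lemma~\ref{non-vanishing-lemma}; rule out $y^n_N \to \infty$ by translating to the limit problem in $\RN$, identifying a nontrivial nonnegative $w \in \cK_\infty$ and deriving $b \ge b_\infty$ from the lower bound \eqref{eq:H-lower-est}, contradicting $b < b_\infty$ from Proposition~\ref{main-proposition-energy-estimate}; then translate tangentially by $\xi_n = (y^{n\prime},0)$ to extract a nontrivial weak limit in $H_0^1(\RNp)$. The only cosmetic difference is that you handle the passage to the limit in $\int H(x,u_n)\,dx$ by first restricting to a ball and using uniform smallness of $u_c(\cdot+y^n)$ together with the uniform $L^2$/$L^p$ bounds on $w_n^+$ rather than applying Fatou's lemma pointwise on all of $\RN$ as the paper does; both work.
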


The strategy of the proof is as follows: using the strict inequality \eqref{b-smaller-binfty}, we will manage to prove the existence of a  Cerami sequence whose weak limit is non trivial and thus we will obtain a non trivial solution to \eqref{non-autonomous-positive}.

\begin{proof}[Proof of Theorem \ref{existence-non-autonomous}]
  Since the functional $E$ has a mountain pass geometry (see Lemma \ref{mpGeometry}), there exists a Cerami sequence for $E$ at the corresponding mountain pass level $b$ defined in (\ref{mpLevel}) (see e.g. \cite{Ce1978} or \cite[Theorem 6, Section 1, Chapter IV]{Ek1990}), i.e. there exists $(u_n)_n \subset H_0^1(\RNp)$ such that
  \[ E(u_n) \to b \quad \textup{ and } \quad (1+\|u_n\|)\|E'(u_n)\|_{H^{-1}(\RNp)} \to 0, \quad \textup{ as } n \to \infty.\]
  By Proposition~\ref{boundednessPSCerami} we know that $(u_n)_n$ is bounded in $H_0^1(\RNp)$. Moreover,
  \begin{equation}
    \label{eq:un-minus-sec-5}
  \|u_n^-\|^2 = \langle E'(u_n),u_n^- \rangle \to 0 \qquad \text{as $n \to \infty$.}
  \end{equation}
%  Hence, up to a subsequence, we have that
%\begin{equation} \label{congergence-existence-result}
%u_n \weakto u \textup{ in } H_0^1(\RNp), \quad u_n \to u \textup{ in } L_{loc}^%q(\RNp) \textup{ for } 1 \leq q < 2^{\ast} \quad \textup{ and }  \quad u_n \to %u\; \text{ a.e. in } \RNp, 
%\end{equation}
%for some $u \in H_0^1(\RNp)$ with $u_n \ge 0$. If we show that $u \not \equiv 0%$, then $u$ is a non-trivial solution to \eqref{non-autonomous-positive} and th%e proof is finished.

  Let $(y^n)_n \subset \RNp$ be the sequence of points obtained in Lemma \ref{non-vanishing-lemma} applied to $(u_n)_n$, i.e., we have
  \begin{equation}
    \label{eq:lemma-3-3-consequence}
 \liminf_{n \to \infty} \int_{B_R(y^n) \cap \RNp} u_n^2\, dx \geq \delta \qquad \text{for some $\delta>0$.}
   \end{equation}
We split the argument into two steps.

\medbreak
\noindent \textbf{Step 1: } \textit{There exists $M > 0$ such that $y^n_{N} = \dist(y^n, \partial \RNp) \leq M$ for all $n \in \N$.}
\medbreak
We assume by contradiction that
\begin{equation} \label{y_n-to-infty}
\displaystyle \lim_{n \to \infty} y^n_{N} = + \infty.
\end{equation}
Then, let us define, for all $n \in \N$, $w_n := u_n (\cdot + y^n)$.
By Lemma \ref{non-vanishing-lemma} and (\ref{eq:un-minus-sec-5}), it follows that 
\begin{equation}
w_n \weakto w \textup{ in } H^1(\RN), \quad w_n \to w \textup{ in } L^q_{loc}(\RN)\ \;\text{for $1 \leq q < 2^{\ast}$}, \quad \textup{ and } \quad w_n \to w \textup{ a.e. in } \RN,
\end{equation}
for some $w \in H^1(\RN)$ with $w \ge 0$, $w \not \equiv 0$. We also observe that 
\begin{equation}
  \label{fatou-final-prelim}
  b + o(1)  = E(u_n) - \frac{1}{2} \langle E'(u_n), u_n \rangle = \int_{\RNp} H(x,u_n(x))\,dx=\int_{ \{x_N  \geq -y^n_{N} \}} H(x+y_n,w_n^{+}(x)) dx, \qquad \text{as $n \to \infty$}, 
\end{equation}
with the function $H$ defined in Lemma~\ref{G-g-estimate} (ii).
Next, we note that
$$
H(x+y_n,w_n^{+}(x)) \ge 0, \qquad \text{for $x \in \{x_N  \geq -y^n_{N} \}$},
$$
and
\begin{align*}
\liminf_{n \to \infty} H(x+y_n,w_n^{+}(x)) &\ge \liminf_{n \to \infty}\,\Bigl(\frac{p-1}{2(p+1)}\,[w_n^+(x)]^{p+1} - u_c(x+y_n)^{p-1} D_{1,p} [w_n^+(x)]^2 - u_c(x+y_n) 1_{\{p > 2\}} D_{2,p} [w_n^+(x)]^p \Bigr)\\
&= \frac{p-1}{2(p+1)}\,[w^+(x)]^{p+1}= \frac{p-1}{2(p+1)}\,w^{p+1}(x), \qquad \text{for $x \in \{x_N  \geq -y^n_{N} \}$},
\end{align*}
by Lemma~\ref{G-g-estimate} (ii) and \eqref{behaviour-u0-infinity}. Thus, (\ref{fatou-final-prelim}) and Fatou's Lemma imply that 
\begin{equation}
  \label{eq:less-equal-b}
\frac{p-1}{2(p+1)}\|w\|_{L^{p+1}(\R^N)}^{p+1} \le b.
\end{equation}
Next we claim that $w \in \cK_\infty$, i.e., $w$ is a nontrivial solution of \eqref{problemRn}. To see this, we fix an arbitrary $\varphi \in C_c^{\infty}(\RN)$, and we show that 
\begin{equation}
  \label{eq:distr-solut}
\int_{\RN} \big(\nabla w \nabla \varphi + w \varphi \big) dx = \int_{\RN} (w^{+})^{\,p} \varphi\, dx.
\end{equation}
Since \eqref{y_n-to-infty} holds, we have that $\supp(\varphi) \subset \{x_N \geq -y^n_{N}\}$ for $n \in \N$ sufficiently large. Hence, for $n\in \N$ large enough, we have that
\begin{equation}
\begin{aligned}
o(1) & = \langle E'(u_n), \varphi (\cdot - y^n) \rangle\\
& = \int_{\{x_N \geq - y^n_{N}\}} \left(\nabla w_n \nabla \varphi + w_n \varphi \right) dx - \int_{\{x_N \geq - y^n_{N}\}} \big(u_c(\cdot+y^n)+w_n^{+}\big)^p \varphi dx - \int_{\{x_N \geq - y^n_{N}\}} u_c^p(\cdot+y^n) \varphi dx \\
& = \int_{\RN} \left(\nabla w_n \nabla \varphi + w_n \varphi \right) dx - \int_{\RN} \big(u_c(\cdot+y^n)+w_n^{+}\big)^p \varphi dx + o(1) \\
& =  \int_{\RN} \left(\nabla w_n \nabla \varphi + w_n \varphi \right) dx - \int_{\RN} (w_n^{+})^p \varphi dx - \int_{\RN} \big( (u_c(\cdot+y^n)+w_n^{+})^p - (w_n^{+})^p \big) \varphi dx + o(1) \\
& = \int_{\RN} \left(\nabla w_n \nabla \varphi + w_n \varphi \right) dx - \int_{\RN} (w_n^{+})^p \varphi dx + o(1)  \\
& = \int_{\RN} \big(\nabla w \nabla \varphi + w \varphi \big) dx - \int_{\RN} (w^{+})^{\,p} \varphi\, dx  + o(1), \qquad \text{as $n \to \infty$.}
\end{aligned}
\end{equation}
Hence (\ref{eq:distr-solut}) follows, and therefore $w \in \cK_\infty$. Together with (\ref{gs-energy-identity}) and (\ref{eq:less-equal-b}) it then follows that $b \geq b_{\infty}$, but this contradicts \eqref{b-smaller-binfty}. Hence, \eqref{y_n-to-infty} cannot happen and Step 1 follows.

\medbreak

\noindent \textbf{Step 2: }\textit{ Conclusion.}

\medbreak
Let us define, for all $n \in \N$, $v_n:= u_n( \cdot + \xi_n)$ with $\xi _n:= (y^n_{1}, \ldots, y^n_{{N-1}}, 0)$ and observe that, after passing to a subsequence
\begin{equation}
v_n \weakto v \textup{ in } H_0^1(\RNp), \quad v_n \to v \textup{ in } L_{loc}^q(\RNp) \textup{ for } 1 \leq q < 2^{\ast} \quad \textup{ and }  \quad v_n \to v \textup{ a.e. in } \RNp, 
\end{equation}
for some $v \in H_0^1(\RNp)$. Also, note that  $(v_n)_n \subset H_0^1(\RNp)$ is a Cerami sequence for $E$ at level $b$. Hence, if $v \not \equiv 0$, we will have that $v$ is a non-trivial solution to \eqref{non-autonomous-positive}. Since $v_n \to v$ in $L^q_{loc}(\RNp)$ and $y^n_{N} \leq M$ for all $n \in \N$, the lower integral bound (\ref{eq:lemma-3-3-consequence}) implies that $v \not \equiv 0$, and the result follows. 
\end{proof}

\begin{proof}[\textbf{Proof of Theorem \ref{th1-existence-refined} (completed)}]
Let $u \in H_0^1(\RNp)$ be the non-negative and non-trivial solution to \eqref{non-autonomous} obtained in Theorem \ref{existence-non-autonomous}. By standard elliptic regularity we have that $u \in C^2(\RNp) \cap C(\overline{\RNp}) \cap L^{\infty}(\RNp)$ and  $v := u_c +  u$ is a bounded positive solution to \eqref{mainProblem}-\eqref{cond-infinity} of the form (\ref{eq:solution-ansatz}).  
\end{proof}

\begin{remark}
As explained in the introduction, Theorem~\ref{th1} (i) and Corollary \ref{th1-existence-refined-multiplicity} are direct consequences of Theorem \ref{th1-existence-refined}. 
\end{remark}

\medbreak
In the remaining of this section we prove Proposition \ref{boundedness-of-the-solutions}. Let us first state a technical lemma due to Pol\'{a}\v{c}ik, Quittner and Souplet that will be key to prove this result.

\begin{lem}{\rm\textbf{(Particular case of \cite[Lemma 5.1]{PoQuiSou2007})} } \label{lemma-PoQuiSou}
 \it Let $(X,d)$ be a complete metric space and let $M: X \to (0,+\infty)$ be continuous. For any $\delta < \sup_{X} M$ and any $k > 0$ there exists $y \in X$ such that
\begin{itemize}
\item[$\bullet$] $M(y) \geq \delta$.
\item[$\bullet$] $M(z) \leq 2 M(y)$ for all $z \in X$ with $d(z,y) \leq \frac{k}{M(y)}$.
\end{itemize}
\end{lem}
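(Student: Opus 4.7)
The plan is to prove this by a contradiction-style iteration that constructs a Cauchy sequence and then uses completeness together with the finiteness of $M$ to force the construction to halt at a good point. This is in the spirit of Ekeland's variational principle: we attempt to improve a candidate point over and over, and show the improvement cannot go on forever.

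First I would pick an initial $y_0 \in X$ with $M(y_0) \ge \delta$, which is possible because $\delta < \sup_X M$. Given $y_n$ with $M(y_n) \ge 2^n \delta$, I would check whether $y_n$ already satisfies the doubling property $M(z) \le 2 M(y_n)$ on the ball $\{d(z,y_n) \le k/M(y_n)\}$. If so, set $y := y_n$ and stop; this $y$ satisfies $M(y) \ge 2^n \delta \ge \delta$ and the required doubling inequality. Otherwise, there exists $y_{n+1} \in X$ with $d(y_{n+1}, y_n) \le k/M(y_n)$ and $M(y_{n+1}) > 2 M(y_n)$, and in particular $M(y_{n+1}) \ge 2^{n+1} \delta$, so the induction continues.

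The key step is to show this construction must terminate. If it did not, I would have an infinite sequence $(y_n)_n$ with $M(y_n) \ge 2^n \delta$ and
\[
d(y_{n+1}, y_n) \le \frac{k}{M(y_n)} \le \frac{k}{2^n \delta}.
\]
Summing the geometric series $\sum_n k/(2^n \delta) = 2k/\delta < \infty$ shows that $(y_n)_n$ is a Cauchy sequence in $(X,d)$. By completeness, $y_n \to y^\ast$ for some $y^\ast \in X$, and by continuity of $M$ we would obtain $M(y^\ast) = \lim_n M(y_n) = +\infty$, which contradicts the hypothesis that $M$ takes values in the open interval $(0,+\infty)$. Hence the iteration stops after finitely many steps at a point $y = y_n$ with the two required properties.

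The only subtlety I expect is a minor bookkeeping one: in the non-doubling step I need to invoke the \emph{existence} of an improving point $y_{n+1}$, which is just the negation of the doubling condition on $y_n$, and I must make sure the improvement is strict ($M(y_{n+1}) > 2 M(y_n)$) so that the geometric growth $M(y_n) \ge 2^n \delta$ really propagates through the induction. No further estimates or compactness are needed; completeness of $(X,d)$ and continuity of $M$ do all the work.
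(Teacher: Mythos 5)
Your proof is correct, and in fact the paper does not prove this lemma at all: it is quoted verbatim as a particular case of \cite[Lemma 5.1]{PoQuiSou2007}, whose original proof is essentially the iteration you describe (if the doubling property fails at $y_n$, pick $y_{n+1}$ with $d(y_{n+1},y_n)\le k/M(y_n)$ and $M(y_{n+1})>2M(y_n)$, sum the geometric series, use completeness and the continuity and finiteness of $M$ to get a contradiction). One tiny bookkeeping point: since the statement allows $\delta\le 0$, the growth bound should be phrased as $M(y_n)\ge 2^n M(y_0)$ with $M(y_0)>0$ (rather than $M(y_n)\ge 2^n\delta$), which is what actually forces $M(y_n)\to\infty$ and makes the distance series $\sum_n k\,2^{-n}/M(y_0)$ summable; with that trivial adjustment the argument is complete.
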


\medbreak
The following proof is inspired by \cite[Lemma 2.5]{DaWe2010}.

\begin{proof}[\textbf{Proof of Proposition \ref{boundedness-of-the-solutions}}]
We assume by contradiction that there exists $v \in C^2(\RNp) \cap C(\overline{\RNp})$ unbounded solving \eqref{mainProblem}-\eqref{cond-infinity}. By Lemma \ref{lemma-PoQuiSou} applied with $X= \overline{\RNp}$ and $M = v^{\frac{p-1}{2}}$, there exits a sequence $(y^k)_k \subset \RNp$ such that
\begin{align} \label{consequence1-lemma-PoQuiSou}
& M(y^k) \to \infty, \quad \textup{ as }\ k \to \infty, \\
& M(z) \leq 2 M(y^k), \quad \textup{ for all } z \in \RNp\ \textup{ with }\ d(z,y^k) \leq \frac{k}{M(y^k)}\ \textup{ and all }\ k \in \N. \label{consequence2-lemma-PoQuiSou}
\end{align}
Note that, without loss of generality, we can suppose that $M(y^k) \geq 1$ for all $k \in \N$. We then define, for all $k \in \N$, $d_k:= y_N^k M(y^k)$, the half-space $H_k := \big\{x \in \RNp: x_N > - d_k \big\}$ and 
$$
v^k: H_k \to \RNp \quad \textup{ given by }\quad v^k(z):= \frac{1}{v(y^k)} v\big( y^k + \frac{z}{M(y^k)} \big)\,.
$$
Note that, for all $k \in \N$, $v^k$ is a positive solution to
\begin{equation} \label{vk-equation-boundedness}
\left\{
\begin{aligned}
-\Delta v^k + \frac{1}{M^2(y^k)} v^k & = (v^k)^p, & \quad \textup{ in } H_k,\\
v^k & = \frac{c}{M^{\frac{2}{p-1}}(y^k)}, & \textup{ on } \partial H_k,
\end{aligned}
\right.
\end{equation}
and, by its definition and \eqref{consequence2-lemma-PoQuiSou}, it satisfies 
\begin{equation} \label{vk-properties-boundedness}
v^k(0) = 1 \quad \textup{ and } \quad v^k(z) \leq 2^{\frac{2}{q-1}} \quad \textup{ for all } z \in H_k \cap B_k(0).
\end{equation}
We now consider two cases separately.
\medbreak
\noindent \textit{Case 1: } $d_k \to \infty\ $ as $\ k \to \infty$.
\medbreak
Using standard $L^q$ estimates (see e.g. \cite[Chapter 9]{G_T_2001_S_Ed}), \eqref{vk-equation-boundedness} and \eqref{vk-properties-boundedness}, we get (taking a subsequence if necessary) that $(v^k)_k$ is locally $W^{2,q}$-bounded in $\RN$ for arbitrarily large $q < +\infty$. Hence, up to a subsequence, $v^k \to \overline{v}$ in $C_{loc}^{1}(\RN)$, where $\overline{v} \in C^1(\RN)$ is a non-trivial positive solution to
\begin{equation}  \label{Gispr-1}
\Delta \overline{v} + \overline{v}^p = 0, \quad \textup{ in } \RN.
\end{equation}
By \eqref{vk-properties-boundedness} we infer that $\overline{v}$ is bounded in $\RN$ with $\overline{v}(0)=1$. Hence $\overline{v} \in C^2(\RN)$ by standard elliptic regularity. Then, since by \cite[Theorem 1.2]{GiSpr1981-2} we know that the only $C^2(\RN)$ non-negative solution to \eqref{Gispr-1} is $\overline{v} \equiv 0$, we obtain a contradiction and deduce that \textit{ Case 1} cannot happen. 

\medbreak
\noindent \textit{Case 2: } $d_k \to d \geq 0\ $ as $\ k \to \infty$.
\medbreak
Let us define, for all $k \in \N$,
\begin{equation}
w^k: \RNp \to \R \quad \textup{ as } \quad w^k(z) = v^k (z-d_k e_N),
\end{equation}
where $e_N := (0, \ldots,0,1)$ is the $N$-th coordinate vector. Note that, for all $k \in \N$, $w^k$ is a positive solution to 
\begin{equation} \label{wk-equation-boundedness}
\left\{
\begin{aligned}
-\Delta w^k + \frac{1}{M^2(y^k)} w^k & = (w^k)^p, & \quad \textup{ in } \RNp,\\
w^k & = \frac{c}{M^{\frac{2}{p-1}}(y^k)}, & \textup{ on } \partial \RNp,
\end{aligned}
\right.
\end{equation}
and satisfies 
\begin{equation} \label{wk-properties-boundedness}
w^k(d_k e_N) = 1 \quad \textup{ and } \quad w^k(z) \leq 2^{\frac{2}{p-1}} \quad \textup{ for all } z \in \overline{\RNp} \cap B_k(d_k e_N).
\end{equation}
Now, arguing as in the proof of \cite[Theorem 9.13]{G_T_2001_S_Ed} (with auxiliary functions $\varphi^k = w^k - c\, M^{-\frac{2}{p-1}}(y^k)$) and taking into account \eqref{wk-equation-boundedness} and \eqref{wk-properties-boundedness}, we get (taking a subsequence if necessary) that $(w^k)_k$ is locally $W^{2,q}$-bounded in $\overline{\RNp}$ for arbitrarily large $q < +\infty$ and therefore also locally $C^{1,\beta}$-bounded in $\overline{\RNp}$ for all $\beta \in (0,1)$. In particular, $|\nabla w^k|$ remains bounded pointwise independently of $k$ in a neighbourhood of the origin. Taking into account \eqref{consequence1-lemma-PoQuiSou}, the boundary conditions in \eqref{wk-equation-boundedness} and \eqref{wk-properties-boundedness}, we infer that $d = \lim_{k \to \infty} d_k > 0$. Hence, up to a subsequence, $w^k \to \overline{w}$ in $C^{1}_{loc}(\RNp)$ with $\overline{w} \in C^1(\RNp)$ a non-trivial positive solution to 
\begin{equation} \label{Gispr-2}
\left\{
\begin{aligned}
\Delta \overline{w} + \overline{w}^p & = 0, & \quad \textup{ in } \RNp,\\
\overline{w} & = 0, & \quad \textup{ on } \partial \RNp.
\end{aligned}
\right.
\end{equation}
By \eqref{wk-properties-boundedness} we have that $\overline{w}$ is bounded with $\overline{w}(d e_N)=1$. Hence $\overline{w} \in C^2(\RN) \cap C(\overline{\RNp})$ by standard elliptic regularity. Then, since by \cite[Theorem 1.3]{GiSpr1981-2} we know that the only $C^2(\RNp) \cap C(\overline{\RNp})$ non-negative solution to \eqref{Gispr-2} is $\overline{w} \equiv 0$ , we obtain a contradiction and deduce that \textit{Case 2} cannot happen either. Hence, the result follows. 
\end{proof}

\section{The non-existence result} \label{section-non-existence-uniqueness}

\noindent In this section we prove Part (ii) of Theorem~\ref{th1}, which is concerned with the non-existence of bounded positive solutions to \eqref{mainProblem}-\eqref{cond-infinity} in the case $c > \woo$. Recall that
$$
w_0(t) = \woo \left[ \cosh \left( \frac{p-1}{2} t \right) \right]^{-\frac{2}{p-1}},
$$
is the unique even non-trivial positive solution to \eqref{1dim}. Throughout this section, we will use the following notation. We define $v_0: \RN \to \R$ as 
\begin{equation} \label{v0}
v_0(x) = w_0(x_N), \qquad \text{for $x \in \RN.$}
\end{equation}
Also, recall that for a bounded positive solution to \eqref{mainProblem}-\eqref{cond-infinity}, we mean a function $v \in C^2(\RNp) \cap C(\overline{\RNp}) \cap L^{\infty}(\RNp)$, positive, satisfying \eqref{mainProblem} in the pointwise sense and such that \eqref{cond-infinity} holds.

\begin{thm} \label{non-existence}
For $N \geq 1$, $p > 1$ and $c > \woo$, there are no bounded positive solutions to \eqref{mainProblem}-\eqref{cond-infinity}.
\end{thm}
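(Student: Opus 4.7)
The strategy is the sliding method, comparing $v$ with the family of $x_N$-translates $\widetilde{w}_\tau(x) := w_0(x_N + \tau)$ of the one-dimensional profile. The crucial point is that $\sup_{t \in \R} w_0(t) = \woo < c$, so $v > \widetilde{w}_\tau$ on $\partial \RNp$ for every $\tau \in \R$. I would argue by contradiction: assume $v$ is a bounded positive solution to \eqref{mainProblem}-\eqref{cond-infinity} with $c > \woo$, and aim to prove
\[
v(x) \ge w_0(x_N + \tau) \qquad \text{for every $x \in \RNp$ and every $\tau \in \R$.}
\]
Taking $\tau = -x_N$ for each fixed $x$ then yields $v(x) \ge w_0(0) = \woo$, contradicting the decay condition \eqref{cond-infinity}.

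Set $S := \{\tau \in \R : v \ge \widetilde{w}_\tau \text{ in } \RNp\}$. I would show $S = \R$ in three steps: (1) $S \supseteq [\tau_0,\infty)$ for some $\tau_0$; (2) $S$ is closed; (3) $\tau_* := \inf S = -\infty$. For (1), when $\tau$ is large positive, $\widetilde{w}_\tau \le w_0(\tau)$ is uniformly small, so writing the linearised identity $-\Delta(v - \widetilde{w}_\tau) + \bigl(1 - p \xi^{p-1}\bigr)(v - \widetilde{w}_\tau) = 0$ via the mean value theorem (with $\xi$ between $\widetilde{w}_\tau$ and $v$) exhibits a positive zero-order coefficient wherever $v$ is small; combined with the strictly positive boundary datum $c - w_0(\tau)$ and the decay at infinity, the maximum principle in the half-space gives $\tau \in S$. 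Step (2) is immediate by passing to pointwise limits. The whole content is in (3): if $\tau_* \in \R$, closedness yields $\tau_* \in S$, so $\psi := v - \widetilde{w}_{\tau_*}$ is nonnegative in $\RNp$, vanishes as $x_N \to \infty$, and equals $c - w_0(\tau_*) \ge c - \woo > 0$ on $\partial \RNp$. Since $\psi$ solves $-\Delta \psi + (1 - p \xi^{p-1}) \psi = 0$, the strong maximum principle gives $\psi > 0$ strictly in the interior.

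To contradict $\tau_* = \inf S$, I would then show $\tau_* - \epsilon \in S$ for some small $\epsilon > 0$. Splitting $\RNp$ into a slab $\{0 \le x_N \le R\}$ and an exterior $\{x_N \ge R\}$ with $R$ large, I would first use the Hopf boundary lemma together with translations in $x' \in \R^{N-1}$ and an elliptic-compactness argument to derive a uniform lower bound $\psi \ge \delta_R > 0$ on the slab, uniformly in $x'$. On the exterior, the decay of $v$ and $\widetilde{w}_{\tau_*}$ forces $1 - p \xi^{p-1} \ge \tfrac{1}{2}$ (after enlarging $R$ if needed), so the maximum principle for unbounded regions with positive zero-order coefficient yields $v \ge \widetilde{w}_{\tau_* - \epsilon}$ there, provided $\epsilon$ is small enough that $\widetilde{w}_{\tau_* - \epsilon} - \widetilde{w}_{\tau_*} < \delta_R$ on the slab and the boundary inequality on $\{x_N = R\}$ survives. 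This gives $\tau_* - \epsilon \in S$, the desired contradiction. The main obstacle is the uniform-in-$x'$ lower bound $\psi \ge \delta_R$ on the slab: since $\RNp$ is $x'$-translation invariant but $\psi$ is not periodic, one needs a compactness/limit procedure (translating in $x'$, extracting a limit solving the same half-space problem, and ruling out the trivial limit via the strict boundary condition $\psi = c - w_0(\tau_*) > 0$), and this is exactly the point at which the strict gap $c > \woo$ enters, paralleling the borderline difficulty at $c = \woo$ flagged in Remark~\ref{remark-th1}(e).
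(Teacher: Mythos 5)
Your proposal follows the paper's own argument essentially verbatim: both use the sliding method with the $x_N$-translates $w_0(x_N+\tau)$ as comparison functions, both initialize at large $\tau$ via the weak maximum principle (exploiting that $1-p\xi^{p-1}\ge 1/2$ where the translate is small), both define $\tau_*=\inf S$ and aim to show it is $-\infty$, and both handle the slide below $\tau_*$ by splitting $\RNp$ into a bounded slab and an exterior half-space, using $x'$-translation plus elliptic compactness plus the strong maximum principle (fed by the strict boundary gap $c-\woo>0$) to rule out the touching case. The only cosmetic difference is that the paper explicitly case-splits on whether $\inf_{\{0\le x_N\le M\}}(v-v_{t_\star})$ is positive or zero, whereas you fold the zero case into the proof of the uniform lower bound $\psi\ge\delta_R$ on the slab; these are the same argument.
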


\begin{proof}
Let us fix an arbitrary $c > \woo$. We assume by contradiction that there exists a bounded positive solution $v$ to \eqref{mainProblem}-\eqref{cond-infinity} and we define, for all $t \in \R$, $v_t:= v_0 (\cdot + te_N)$ where $v_0$ is given in \eqref{v0} and $e_N = (0,\ldots,0,1)$ is the $N$-th coordinate vector. We split the proof into three steps.

\medbreak
\noindent \textbf{Step 1: } \textit{There exists $t_0 > 0$ such that $v > v_t$ in $\overline{\RNp}$ for all $t \geq t_0$.}

\medbreak
First of all, fixed an arbitrary $x \in \overline{\RNp}$, observe that
$$ v_t(x) \to 0, \quad \textup{ as } t \to \infty, \quad \textup{ and } \quad v_{t_1}(x) > v_{t_2}(x), \quad \textup{ for all } 0 < t_1 < t_2.$$
Hence, there exists $t_0 > 0$ such that, for all $t \geq t_0$, 
\begin{equation} \label{choosing t0}
v_t \leq \left(\frac{1}{2p}\right)^{\frac{1}{p-1}} \quad \textup{ in } \overline{\RNp}.
\end{equation}
We fix $t_0 > 0$ such that \eqref{choosing t0} holds and we are going to prove the Step 1 for this $t_0$. To that end, we fix an arbitrary $t \geq t_0 > 0$. First, we are going to prove that $v \geq v_t$ in $\overline{\RNp}$. Since $c > \woo \geq \max_{x \in \R} v_t(x)$, we have that
\begin{equation} \label{v-minus-vt-first-version}
\left\{
\begin{aligned}
 -\Delta (v-v_t) + (v-v_t) & = |v|^{p-1} v-|v_t|^{p-1} v_t, \quad & \textup{ in } \RNp,\\
 v-v_t & > 0, & \quad \textup{ on } \partial \RNp,
\end{aligned}
\right.
\end{equation}
or equivalently
\begin{equation}
\left\{
\begin{aligned}
 -\Delta (v-v_t) + c_t(x)(v-v_t) & = 0, \quad & \textup{ in } \RNp,\\
 v-v_t & > 0, & \quad \textup{ on } \partial \RNp,
\end{aligned}
\right.
\end{equation}
where
\begin{equation}
c_t(x):= \left\{
\begin{aligned}
& 1 - \frac{(v(x))^p - (v_t(x))^p}{v(x)-v_t(x)}, &\quad \textup{ if } v(x) - v_t(x) \neq 0,\\
&1, & \qquad \textup{ if } v(x)-v_t(x) = 0.
\end{aligned}
\right.
\end{equation}
We assume by contradiction that 
\begin{equation}
\big\{x \in \RNp: v(x) < v_t(x) \big\} \neq \emptyset.
\end{equation}
Then, using the mean value theorem and \eqref{choosing t0}, we deduce that, for all $ x \in \{x \in \RNp: v(x) < v_t(x) \}$,
\begin{equation} \label{ct12}
c_t(x) \geq 1-p (v_t(x))^{p-1} \geq \frac{1}{2}.
\end{equation}
Hence, in each connected component $D$ of $\{x \in \RNp: v(x) < v_t(x)\}$ we have that
\begin{equation}
\left\{
\begin{aligned}
 -\Delta (v-v_t) + c_t(x)(v-v_t) & = 0, \quad & \textup{ in } D,\\
 v-v_t & = 0, & \quad \textup{ on } \partial D,
\end{aligned}
\right.
\end{equation}
with $c_t$ satisfying \eqref{ct12}. Then, applying the weak maximum principle \cite[Lemma 2.1]{BeCaNi1997-2}, we obtain that $v \geq v_t$ in $\overline{D}$ which contradicts the fact that $D \subset \{x \in \RNp: v(x) < v_t(x) \}$. Hence, we conclude that $\{x \in \RNp: v(x) < v_t(x) \} = \emptyset$ and so, that $v \geq v_t$ in $\RNp$. Having this at hand and substituting in \eqref{v-minus-vt-first-version}, we deduce that
\begin{equation}
\left\{
\begin{aligned}
 -\Delta (v-v_t) + (v-v_t) & \geq 0, \quad & \textup{ in } \RNp,\\
 v-v_t & > 0, & \quad \textup{ on } \partial \RNp,
\end{aligned}
\right.
\end{equation}
and so, the Step 1 follows from the strong maximum principle and the fact that $t \geq t_0$ is arbitrary.

\medbreak
\noindent \textbf{Step 2: } \textit{ $v > v_t$ in $\overline{\RNp}$ for all $t \in \R$. }
\medbreak
Note that, if we prove that $v \geq v_t$ in $\RNp$ for all $t \in \R$, then the claim follows from the Strong Maximum principle. Also, by the Step 1, we know that 
$$
\big\{\, t \in \R: v \geq v_s \textup{ in } \RNp \textup{ for all } s \geq t\, \big\} \neq \emptyset.
$$
Hence, we can define
\begin{equation} \label{t-star}
t_{\star} := \inf \big\{ \, t \in \R: v \geq v_s \textup{ in } \RNp \textup{ for all } s \geq t\, \big\} \in [-\infty, t_0].
\end{equation}
We argue by contradiction and suppose that $t_{\star} > - \infty$. First note that, by continuity, $v \geq v_{t_{\star}}$ in $\RNp$. Also, $t_{\star} > - \infty$ implies the existence of $M > 0$ such that
\begin{equation} \label{choosing M}
v_{t}(x',x_N)  \leq \left(\frac{1}{2p}\right)^{\frac{1}{p-1}}, \quad \textup{ for all } t \in [t_{\star}-1, t_{\star}],\ x' \in \R^{N-1} \textup{ and } \ x_N \geq M.
\end{equation}
We now consider separately two cases.
\medbreak
\noindent \textit{Case 1: } $\displaystyle \inf_{x \in \R^{N-1} \times [0,M]}\big(v-v_{t_{\star}}\big) =: \delta_M > 0$.
\medbreak
First, taking into account that $\|w_0'\|_{L^{\infty}(\R)} \leq \woo$, we infer that, for all $t \leq t_{\star}$ and $x \in \R^{N-1} \times [0,M]$,
\begin{equation*}
\begin{aligned}
v(x',x_N) - v_t(x',x_N) & = v(x',x_N) - v_{t_{\star}}(x',x_N) + \big(v_{t_{\star}}(x',x_N) - v_t(x',x_N) \big) \\
& \geq \delta_M - \big| w_0(x_N+ t_{\star}) - w_0(x_N+t) \big| \\
& \geq \delta_M - c_p |t_{\star}-t|.
\end{aligned}
\end{equation*}
Hence, there exists $\eta_0 \in (0,1)$ such that, for all $t_{\star} \geq t \geq t_{\star}-\eta_0$, 
\begin{equation} \label{strict-ineq-eta-0}
v(x',x_N) - v_t(x',x_N) > 0, \quad \textup{ for all } x' \in \R^{N-1}\ \textup{ and } \ x_N \in [0,M].
\end{equation}
In particular, if we define $\Sigma_M:= \{ x \in \RN: x_N > M\}$, we have
$$
v - v_t > 0, \quad \textup{ on } \partial \Sigma_M, \quad  \textup{ for all } t \in [t_{\star} - \eta_0, t_{\star}]. 
$$
Next, we are going to prove that, for all $t \in [t_{\star}- \eta_0, t_{\star}]$, it follows $v \geq v_t$ in $\Sigma_M$. To that end, we fix an arbitrary $t \in [t_{\star}- \eta_0, t_{\star}]$. Arguing as in Step 1, we have that
\begin{equation}
\left\{
\begin{aligned}
 -\Delta (v-v_t) + c_t(x)(v-v_t) & = 0, \quad & \textup{ in } \Sigma_M,\\
 v-v_t & > 0, & \quad \textup{ on } \partial \Sigma_M,
\end{aligned}
\right.
\end{equation}
where
\begin{equation}
c_t(x):= \left\{
\begin{aligned}
& 1 - \frac{(v(x))^p - (v_t(x))^p}{v(x)-v_t(x)}, &\quad \textup{ if } v(x) - v_t(x) \neq 0,\\
&1, & \qquad \textup{ if } v(x)-v_t(x) = 0.
\end{aligned}
\right.
\end{equation}
We assume by contradiction that 
\begin{equation}
\big\{x \in \Sigma_M: v(x) < v_t(x) \big\} \neq \emptyset.
\end{equation}
Then, using the mean value theorem and \eqref{choosing M}, we deduce that, for all $ x \in \{x \in \Sigma_M: v(x) < v_t(x) \}$,
\begin{equation} \label{ct13}
c_t(x) \geq 1-p (v_t(x))^{p-1} \geq \frac{1}{2}.
\end{equation}
Hence, in each connected component $D$ of $\{x \in \Sigma_M: v(x) < v_t(x)\}$ we have that
\begin{equation}
\left\{
\begin{aligned}
 -\Delta (v-v_t) + c_t(x)(v-v_t) & = 0, \quad & \textup{ in } D,\\
 v-v_t & = 0, & \quad \textup{ on } \partial D,
\end{aligned}
\right.
\end{equation}
with $c_t$ satisfying \eqref{ct13}. Then, applying the weak maximum principle \cite[Lemma 2.1]{BeCaNi1997-2}, we obtain that $v \geq v_t$ in $\overline{D}$ which contradicts the fact that $D \subset \{x \in \Sigma_M: v(x) < v_t(x) \}$. Hence, we conclude that $\{x \in \Sigma_M: v(x) < v_t(x) \} = \emptyset$ and so, that $v \geq v_t$ in $\Sigma_M$. Taking into account \eqref{strict-ineq-eta-0}, we infer that, for all $\eta \in [0,\eta_0]$, $v \geq v_{t_{\star}-\eta}$ in $\RNp$. This is in contradiction with the definition of $t_{\star}$. Hence, \textit{Case 1} cannot happen. 

\medbreak
\noindent \textit{Case 2:} $\displaystyle \inf_{x \in \R^{N-1} \times [0,M]} \big( v - v_{t_{\star}} \big) = 0$.
\medbreak
In this case there exists a sequence of points $(x^n)_n \subset \R^{N-1} \times [0,M]$ such that
\begin{equation}
v(x^n) - v_{t_{\star}}(x^n) \to 0, \quad \textup{ as } n \to \infty.
\end{equation}
Up to a subsequence, it follows that $x_N^n \to \overline{x}_N$ for some $\overline{x}_N \in [0,M]$. We define then
$$
v^n(x) = v\big(x' + (x^n)', x_N\big)\,, \quad \textup{ for all } n \in \N,
$$
and, for all $n \in \N$, we have $v^n \geq v_{t_{\star}}$ in $\RNp$ and 
\begin{equation*}
\left\{
\begin{aligned}
-\Delta v^n + v^n & = (v^n)^p, \quad & \textup{ in } \RNp,\\
v^n & = c, & \textup{ on } \partial \RNp.
\end{aligned}
\right.
\end{equation*}
Moreover, for all $n \in \N$, it follows that
\begin{equation*}
\left\{
\begin{aligned}
-\Delta (v^n-v_{t_{\star}}) + (v^n-v_{t_{\star}}) & \geq 0, \quad & \textup{ in } \RNp,\\
v^n - v_{t_{\star}} & > 0, & \textup{ on } \partial \RNp,
\end{aligned}
\right.
\end{equation*}
and so, by the Strong Maximum principle, we have that
$$
v^n - v_{t_{\star}} > 0, \quad \textup{ in } \overline{\RNp}, \quad \textup{ for all } n \in \N.
$$
Now, arguing as in \cite[Proof of Theorem 2.1, Step 1]{Fa2003}, we deduce that the sequence $(v^n)_n$ admits a subsequence, still denoted by $(v^n)_n$, converging to a function $\overline{v}$ in $C_{loc}^{2}(\RNp)$. This function $\overline{v}$ still solves 
\begin{equation*}
\left\{
\begin{aligned}
-\Delta \overline{v} + \overline{v}& = \overline{v}^p, \quad & \textup{ in } \RNp,\\
\overline{v} & = c, & \textup{ on } \partial \RNp,
\end{aligned}
\right.
\end{equation*}
and satisfies $\overline{v} \geq v_{t_{\star}}, \textup{ in } \RNp$, and \begin{equation} \label{contradiction-case-2}
 \overline{v}(0',\overline{x}_N) = v_{t_{\star}}(0',\overline{x}_N).
\end{equation} 
Note that \eqref{contradiction-case-2} and $\overline{v} = c > c_p \geq v_{t_{\star}}$ on $\partial \RNp$ imply $\overline{x}_N > 0$. Since $\overline{v} \geq v_{t_{\star}}$ in $\RNp$, we have 
\begin{equation*}
\left\{
\begin{aligned}
-\Delta (\overline{v}-v_{t_{\star}}) + (\overline{v} -v_{t_{\star}})& \geq 0, \quad & \textup{ in } \RNp,\\
\overline{v}-v_{t_{\star}} & > 0, & \textup{ on } \partial \RNp,
\end{aligned}
\right.
\end{equation*}
Hence, by the Strong Maximum principle, it follows that $\overline{v} > v_{t_{\star}}$ in $\RNp$ which gives a contradiction with  \eqref{contradiction-case-2}. \textit{Case 2} cannot happen either and hence the Step 2 follows.

\medbreak
\noindent \textbf{Step 3: }\textit{Conclusion.}
\medbreak
Observe that $v > v_t$ in $\RNp$ for all $t \in \R$ implies that $v \geq v_0(0) = \woo$ in $\RNp$. This gives a contradiction with \eqref{cond-infinity} and so the proof is complete.
\end{proof}

\section*{Declarations}
\label{conflict-interest}
\noindent \textbf{Conflict of Interest.} On behalf of all authors, the corresponding author states that there is no conflict of interest.
\smallbreak
\noindent \textbf{Data Availability Statement.} This article has no additional data.

\bibliographystyle{plain}
\bibliography{Bibliography}
\end{document}